\documentclass[a4paper,11pt]{amsart}
\usepackage[latin1]{inputenc}
\usepackage{amsmath}
\usepackage{amsfonts}
\usepackage{amssymb}
\usepackage{amsthm}
\usepackage{mathrsfs}
\usepackage{graphicx}

\addtolength{\evensidemargin}{-7mm}
\addtolength{\oddsidemargin}{-7mm}
\addtolength{\textwidth}{14mm}

\newcommand{\R}{{\mathbb{R}}}
\newcommand{\Q}{{\mathbb{Q}}}
\newcommand{\Z}{{\mathbb{Z}}}
\newcommand{\C}{\mathbb{C}}

\def\vecx{{\text{\boldmath$x$}}}

\def\vecz{{\text{\boldmath$z$}}}

\def\vecv{{\text{\boldmath$v$}}}
\def\vecm{{\text{\boldmath$m$}}}

\def\vec0{{\text{\boldmath$0$}}}

\def\Re{\operatorname{Re}}

\newcommand{\ve}{\varepsilon}

\newcommand{\sfrac}[2]{{\textstyle \frac {#1}{#2}}}

\newcommand{\SL}{\mathrm{SL}}

\newtheorem{thm}{Theorem}[section]
\newtheorem{lem}[thm]{Lemma}
\newtheorem{prop}[thm]{Proposition}
\newtheorem{cor}[thm]{Corollary}

\theoremstyle{remark}
\newtheorem{remark}[thm]{Remark}

\numberwithin{equation}{section}

\begin{document}
\title[On the value distribution of the Epstein zeta function]{On the value distribution of the Epstein zeta function in the critical strip}
\author{Anders S\"odergren}
\address{Department of Mathematics, Box 480, Uppsala University, 751 06 Uppsala, Sweden\newline
\rule[0ex]{0ex}{0ex}\hspace{8pt} {\tt sodergren@math.uu.se}\newline
\rule[0ex]{0ex}{0ex} \hspace{8pt}\textit{Present address:} 
School of Mathematics, Institute for Advanced Study, Einstein\newline 
\rule[0ex]{0ex}{0ex} \hspace{8pt}Drive, Princeton, NJ 08540, USA\newline
\rule[0ex]{0ex}{0ex} \hspace{8pt}{\tt sodergren@math.ias.edu}}  
\date{\today}
\thanks{This material is based upon work supported in part by the Swedish Research Council, Research Grant 621-2007-6352, and in part by the National Science Foundation under agreement No.\ DMS-0635607. Any opinions, findings and conclusions or recommendations expressed in this material are those of the author and do not necessarily reflect the views of the National Science Foundation.}

\maketitle

\begin{abstract}
We study the value distribution of the Epstein zeta function $E_n(L,s)$ for $0<s<\frac{n}{2}$ and a random lattice $L$ of large dimension $n$. For any fixed $c\in(\frac{1}{4},\frac{1}{2})$ and $n\to\infty$, we prove that the random variable $V_n^{-2c}E_n(\cdot,cn)$ has a limit distribution, which we give explicitly (here $V_n$ is the volume of the $n$-dimensional unit ball). More generally, for any fixed $\ve>0$ we determine the limit distribution of the random function $c\mapsto V_n^{-2c}E_n(\cdot,cn)$, $c\in[\frac{1}{4}+\ve,\frac{1}{2}-\ve]$. After compensating for the pole at $c=\frac12$ we even obtain a limit result on the whole interval $[\frac14+\ve,\frac12]$, and as a special case we deduce the following strengthening of a result by Sarnak and Str\"ombergsson \cite{sst} concerning the height function $h_n(L)$ of the flat torus $\R^n/L$: The random variable $n\bigl\{h_n(L)-(\log(4\pi)-\gamma+1)\bigr\}+\log n$ has a limit distribution as $n\to\infty$, which we give explicitly. Finally we discuss a question posed by Sarnak and Str\"ombergsson as to whether there exists a lattice $L\subset\R^n$ for which $E_n(L,s)$ has no zeros in $(0,\infty)$.
\end{abstract}

\section{Introduction}

Let $X_n$ denote the space of $n$-dimensional lattices of covolume $1$. We realize $X_n$ as the homogeneous space $\SL(n,\Z)\backslash \SL(n,\R)$, where $\SL(n,\Z)g$ corresponds to the lattice $\Z^ng\subset\R^n$. We further let $\mu_n$ denote the Haar measure on $\SL(n,\R)$, normalized to be the unique right $\SL(n,\R)$-invariant probability measure on $X_n$.

For $L\in X_n$ and $\Re s>\frac{n}{2}$ the Epstein zeta function is defined by
\begin{align*}
E_n(L,s)={\sum_{\vecm\in L}}'|\vecm|^{-2s},
\end{align*}
where $'$ denotes that the zero vector should be omitted. $E_n(L,s)$ has an analytic continuation to $\C$ except for a simple pole at $s=\frac{n}{2}$ with residue $\pi^{\frac n2}\Gamma(\frac{n}{2})^{-1}$. Furthermore $E_n(L,s)$ satisfies the functional equation
\begin{align}\label{functionaleq}
F_n(L,s)=F_n(L^*,\sfrac{n}{2}-s),
\end{align}
where
\begin{align}\label{Fn}
F_n(L,s):=\pi^{-s}\Gamma(s)E_n(L,s),
\end{align}
and $L^*$ is the dual lattice of $L$. The close relation with the Riemann zeta function, in fact $ \zeta(2s)=\frac{1}{2}E_1(\Z,s)$, makes it natural to call the region $0<\Re s<\frac{n}{2}$ the critical strip for $E_n(L,s)$. Note however that for all $n\geq2$ there exist lattices $L\in X_n$ for which the Riemann hypothesis for $E_n(L,s)$ is known to fail (cf. \cite[Thm.\ 1]{terras3}; see also \cite{bateman}, \cite{stark}, \cite{terras2} and \cite{terras}).

It follows from \eqref{functionaleq} that $E_n(L,0)=-1$ for all $L\in X_n$. Since $E_n(L,s)$ has a simple pole at $s=\frac{n}{2}$ with positive residue it is also clear that 
\begin{align*}
\lim_{s\to\frac n2-}E_n(L,s)=-\infty
\end{align*}
for all $L\in X_n$. In this paper we will be interested in the behavior of $E_n(L,s)$  in the interval $0<s<\frac{n}{2}$ for large $n$. In particular we will, for $0<c<\frac{1}{2}$, be interested in questions concerning the value distribution of $E_n(L,cn)$ as $n\to\infty$. These questions are mainly motivated by the work of Sarnak and Strömbergsson \cite{sst} on minima of $E_n(L,s)$. They note that if there exists a lattice $L_0\in X_n$ satisfying $E_n(L,s)\geq E_n(L_0,s)$ for all $0<s<\frac{n}{2}$ and all $L\in X_n$ then $E_n(L_0,s)<0$ for $0<s<\frac{n}{2}$. Hence, for such a lattice $L_0$, $E_n(L_0,s)$ has no zeros in $(0,\infty)$. 

The question as to whether or not a lattice with the last property can exist is also of interest in algebraic number theory. In particular, by Hecke's integral formula (cf.\ \cite[pp.\ 198-207]{hecke} and \cite[eq.\ (9)]{terras}), if we knew that $E_n(L,s)<0$ for all $0<s<\frac{n}{2}$ and all lattices $L\in X_n$ of a special type related to a given number field $k$, this would imply that the Dedekind zeta function $\zeta_k(s)$ of $k$ satisfies $\zeta_k(s)<0$ for all $s\in(0,1)$!

Gaining insight into whether or not lattices $L\in X_n$ with $E_n(L,s)\neq0$, $\forall s>0$, do exist for all $n$ (or all large $n$) is one of the main goals of the present study. A first step in this direction was taken by Sarnak and Str\"ombergsson in \cite[Sec.\ 6]{sst}, where they study the value distribution of the height function for flat tori as $n\to\infty$. Recall that for the flat torus $\R^n/L$, with $L\in X_n$, the height function is given by
\begin{align}\label{heightdef}
 h_n(\R^n/L)=h_n(L)=2\log(2\pi)+\frac{\partial}{\partial s}E_n(L^*,s)_{|s=0}.
\end{align}
Theorem 3 of \cite{sst} states that if $\ve>0$ is fixed then
\begin{align}\label{heightresult}
\text{Prob}_{\mu_n}\Big\{L\in X_n\,\,\Big|\,\,\big|h_n(L)-(\log(4\pi)-\gamma+1)\big|<\ve\Big\}\to 1
\end{align}
as $n\to\infty$, where $\gamma$ is Euler's constant. Expressed in terms of the Epstein zeta function, \eqref{heightresult} says
\begin{align}\label{HRE}
\text{Prob}_{\mu_n}\Big\{L\in X_n\,\,\Big|\,\,\big|\sfrac{\partial}{\partial s}E_n(L,s)_{|s=0}-(1-\gamma-\log \pi)\big|<\ve\Big\}\to 1
\end{align}
as $n\to\infty$. Here $1-\gamma-\log(\pi)\approx-0.72$. Note that \eqref{HRE} together with $E_n(L,0)=-1$ ($\forall L\in X_n$) give a fairly precise description of the behavior of $E_n(L,s)$ in the left end of the interval $0<s<\frac{n}{2}$ for most $L\in X_n$  when $n$ is large.

The results in the present paper give information on the value distribution of $E_n(L,s)$ for $\frac{n}{4}<s<\frac{n}{2}$ with large $n$. Using \eqref{functionaleq} it is then easy to infer results also for the interval $0<s<\frac{n}{4}$. In order to state our theorems we first need to introduce some notation. We consider a Poisson process $\mathcal P=\big\{\widehat N(V),V\geq0\big\}$ on the positive real line with constant intensity $\frac{1}{2}$, and let   $T_1, T_2,T_3,\ldots$ denote the points of the process ordered in such a way that $0<T_1<T_2< T_3<\ldots$. We let
$N(V):=2\widehat N(V)$ and define, for all $V\geq0$,
\begin{align}\label{Rvdef}
R(V):=N(V)-V.
\end{align}
Finally we let $V_n$ denote the volume of the unit ball in $\R^n$.

\begin{thm}\label{curvethm}
Let $\frac{1}{4}<c_1<c_2<\frac{1}{2}$. For each $n\in\Z_{\geq1}$ consider
\begin{align*}
 c\mapsto V_n^{-2c}E_n(\cdot,cn)
\end{align*}
as a random function in $C\big([c_1,c_2]\big)$. Then the distribution of this random function converges to the distribution of
\begin{align*}
 c\mapsto \int_{0}^{\infty}V^{-2c}\,dR(V)
\end{align*}
as $n\to\infty$.
\end{thm}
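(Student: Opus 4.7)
The idea is to realize both the random function $f_n(c):=V_n^{-2c}E_n(\cdot,cn)$ and its conjectured limit as functionals of a common point process on $\R_{>0}$. Given $L\in X_n$, enumerate the nonzero vectors of $L$ by increasing length and set $V_j:=V_n|\vecm_j|^n$, so that $V_j$ equals the volume of the Euclidean ball of radius $|\vecm_j|$. In the absolutely convergent regime $c>\tfrac12$ one has $f_n(c)=\sum_j V_j^{-2c}$; by analytic continuation (or equivalently by integration by parts against the volume-counting function $\mathcal{N}_L^{\mathrm{vol}}(V):=\#\{j\,:\,V_j\leq V\}$), for $\tfrac14<c<\tfrac12$ this extends to the compensated identity
\begin{align*}
V_n^{-2c}E_n(L,cn)=\lim_{V_0\to\infty}\Bigl(\sum_{V_j\leq V_0}V_j^{-2c}-\frac{V_0^{1-2c}}{1-2c}\Bigr),
\end{align*}
which has exactly the same shape as $\int_0^\infty V^{-2c}\,dR(V)=\lim_{V_0\to\infty}\bigl(\int_0^{V_0}V^{-2c}\,dN(V)-V_0^{1-2c}/(1-2c)\bigr)$.

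With this identification the proof reduces to three convergence statements. Set $\Psi_n(c,V_0):=\sum_{V_j\leq V_0}V_j^{-2c}-V_0^{1-2c}/(1-2c)$ and $\Psi(c,V_0):=\int_0^{V_0}V^{-2c}\,dN(V)-V_0^{1-2c}/(1-2c)$. \emph{(i)} For each fixed $V_0$, $\Psi_n(\cdot,V_0)\Rightarrow\Psi(\cdot,V_0)$ in $C([c_1,c_2])$; this follows from the (previously established) convergence of the point process $\{V_j\}_{j\geq1}$ to the pair-Poisson process $N(\cdot)$, together with easy tightness in $c$ since the truncated sum involves only $O(V_0)$ terms each smooth in $c$. \emph{(ii)} $\Psi(\cdot,V_0)\to\int_0^\infty V^{-2c}\,dR(V)$ in probability in $C([c_1,c_2])$ as $V_0\to\infty$; this is a direct Poisson computation: integration by parts gives the tail as $-V_0^{-2c}R(V_0)+2c\int_{V_0}^\infty V^{-2c-1}R(V)\,dV$, and $\mathrm{Var}(R(V))=2V$ makes the $L^2$-norm of the tail $O(V_0^{1/2-2c})$, which vanishes as $V_0\to\infty$ exactly because $c_1>\tfrac14$. \emph{(iii)} $\Psi_n(\cdot,V_0)\to f_n$ uniformly in $n$ in probability in $C([c_1,c_2])$ as $V_0\to\infty$. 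Combining \emph{(i)}--\emph{(iii)} yields the asserted functional convergence.

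The crux is step \emph{(iii)}, the uniform lattice-side tail estimate
\begin{align*}
\lim_{V_0\to\infty}\limsup_{n\to\infty}\text{Prob}_{\mu_n}\Bigl\{\sup_{c\in[c_1,c_2]}|f_n(c)-\Psi_n(c,V_0)|>\eta\Bigr\}=0\qquad(\forall\eta>0).
\end{align*}
The natural approach is to bound the second moment $\int_{X_n}(f_n(c)-\Psi_n(c,V_0))^2\,d\mu_n$ via Rogers' second (or higher) mean-value formula applied to functions $g(\vecm)$ supported on $\{\vecm\,:\,V_n|\vecm|^n>V_0\}$, and to verify that its leading contribution matches the Poisson variance $2V$ with error terms that decay uniformly in $n$ as $V_0\to\infty$. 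The supremum over $c$ can be handled via a chaining argument using the derivative bound $\tfrac{d}{dc}V_j^{-2c}=-2V_j^{-2c}\log V_j$, again estimated by Rogers' formula, or directly via the analyticity of $c\mapsto f_n(c)$. I expect this tail-control step, and in particular the uniform-in-$n$ matching of lattice second moments with Poisson second moments in the region $V_j>V_0$, to be the main technical obstacle.
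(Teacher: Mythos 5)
Your three-step architecture -- convergence of the truncations $\Psi_n(\cdot,V_0)\Rightarrow\Psi(\cdot,V_0)$, smallness of the Poisson-side tail, smallness of the lattice-side tail uniformly in $n$, glued together by the triangle inequality for the L\'evy--Prohorov metric -- is exactly the architecture of the paper's proof: your steps (i) and (ii) are Lemma \ref{speciallemma} and Lemma \ref{intest}. Where you genuinely diverge is in how you access $E_n(L,cn)$ inside the critical strip. You posit the compensated-sum representation $V_n^{-2c}E_n(L,cn)=\lim_{V_0\to\infty}\Psi_n(c,V_0)$; the paper instead starts from the exact incomplete-gamma expansion \eqref{Fn2}, writes $H_n(L,s)=\int_0^\infty G\big(s,\pi(nV/\omega_n)^{2/n}\big)\,dR_n(V)$, shows that the truncated integral equals $K_{c,n}\int_0^A V^{-2c}\,dR_n(V)$ up to exponentially small errors (Proposition \ref{HAprop}), and must separately show that the dual-lattice term $H_n(L^*,\frac n2-cn)$ is exponentially small (Proposition \ref{JAprop}) -- a term your route avoids entirely. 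If your identity can be justified, your route is somewhat cleaner; the paper's identity is unconditional but is paid for with the asymptotic analysis of $G(s,x)$.

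The gap is that your compensated-sum identity is not routine, and is in fact false for specific lattices. For $L=\Z^n$ the jump of the compensated sum at $|\vecm|^2=m$ has size $r_n(m)\,(V_n m^{n/2})^{-2c}\asymp_n m^{\frac n2-1-cn}$, which tends to infinity for $c<\frac12-\frac1n$; so for such $L$ the limit $\lim_{V_0\to\infty}\Psi_n(c,V_0)$ does not exist on most of $(\frac14,\frac12)$, let alone represent the analytic continuation. The identity holds only for lattices satisfying $|R_n(V)|\ll V^{\frac12+\epsilon}$, and to run your step (iii) you need this bound to hold with constants uniform in $n$ outside a set of uniformly small measure. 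That is precisely Theorem \ref{Rnthm}, which the paper treats as a substantial standalone result: it is proved by a dyadic decomposition and Cauchy--Schwarz chaining in the variable $V$ (not in $c$), with the Rogers-formula second-moment bound $\mathbb E\big[(R_n(V_2)-R_n(V_1))^2\big]<5(V_2-V_1)$ (Lemma \ref{Rnlem1}) as the only probabilistic input. Once that pointwise bound on $R_n$ is in hand, your identity follows by analytic continuation (the tail integral is then analytic in $\Re c>\frac14+\frac\epsilon2$), and the uniform tail estimate (iii), including the supremum over $c$, drops out of a single integration by parts -- no chaining in $c$ and no delicate matching of lattice and Poisson second moments over $\{V>V_0\}$ is required. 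So your instinct that Rogers' formula is the engine is correct, but the essential intermediate object your proposal fails to isolate is the almost-sure, $n$-uniform bound on the lattice-point remainder itself.
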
 

For our purposes it is essential to understand $V_n^{-2c}E_n(\cdot,cn)$ as a random function. Nevertheless, for extra clarity we also state the following immediate corollary of Theorem \ref{curvethm}. 

\begin{cor}\label{firstthm}
For fixed $c\in(\frac{1}{4},\frac{1}{2})$, the distribution of the random variable $V_n^{-2c}E_n(\cdot,cn)$ converges to the distribution of $\int_{0}^{\infty}V^{-2c}\,dR(V)$ as $n\to\infty$. In fact, for any $m\geq1$ and fixed $\frac{1}{4}<c_1<\cdots<c_m<\frac{1}{2}$, the distribution of the random vector
\begin{align*}
\Big(V_n^{-2c_1}E_n(\cdot,c_1n),\ldots,V_n^{-2c_m}E_n(\cdot,c_mn)\Big)
\end{align*}
converges to the distribution of
\begin{align*}
\bigg(\int_{0}^{\infty}V^{-2c_1}\,dR(V),\ldots,\int_{0}^{\infty}V^{-2c_m}\,dR(V)\bigg)
\end{align*}
as $n\to\infty$.
\end{cor}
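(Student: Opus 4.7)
The plan is to deduce Corollary \ref{firstthm} directly from Theorem \ref{curvethm} by invoking the continuous mapping theorem for weak convergence of probability measures.

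First I would fix any $c_1,c_2$ with $\frac14<c_1<c_2<\frac12$ and $c_1<c<c_2$ (for part one), or more generally with $c_1\leq c_1^{(0)}<\cdots<c_m^{(0)}\leq c_2$ (where I write $c_j^{(0)}$ for the prescribed evaluation points, to avoid clashing with the endpoints). Then Theorem \ref{curvethm} applies and yields that, as $n\to\infty$, the random element
\begin{align*}
f_n: c\mapsto V_n^{-2c}E_n(\cdot,cn)
\end{align*}
of $C\bigl([c_1,c_2]\bigr)$ converges in distribution to the random element $f(c):=\int_0^\infty V^{-2c}\,dR(V)$.

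Next I would introduce the evaluation maps. For a single point $c\in(c_1,c_2)$, the map $\Phi:C([c_1,c_2])\to\R$, $\Phi(g):=g(c)$, is continuous in the uniform topology on $C([c_1,c_2])$ (since $|\Phi(g)-\Phi(h)|\leq\|g-h\|_\infty$), so the continuous mapping theorem gives $\Phi(f_n)\Rightarrow\Phi(f)$, which is exactly the first assertion. For the joint statement, the map $\Psi:C([c_1,c_2])\to\R^m$ defined by $\Psi(g):=\bigl(g(c_1^{(0)}),\ldots,g(c_m^{(0)})\bigr)$ is likewise continuous (coordinatewise bounded by the sup norm), and the continuous mapping theorem then yields the stated convergence of the random vector.

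There is no genuine obstacle here; the only thing to check is that the target random function $f$ is well defined and continuous on $[c_1,c_2]$ so that the evaluation maps make sense and so that the weak limit lives on $C([c_1,c_2])$. This continuity is precisely what is built into the statement of Theorem \ref{curvethm}, whose proof establishes that $c\mapsto\int_0^\infty V^{-2c}\,dR(V)$ is almost surely a continuous function of $c$ on $[c_1,c_2]$. With that in hand the argument is just a one-line application of the continuous mapping theorem, and this is exactly why the result is labelled as an immediate corollary of Theorem \ref{curvethm}.
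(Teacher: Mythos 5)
Your argument is correct and is exactly the intended one: the paper presents the corollary as an immediate consequence of Theorem \ref{curvethm}, and the mechanism is precisely the continuity of the evaluation maps on $C\bigl([c_1,c_2]\bigr)$ combined with the continuous mapping theorem (the paper uses the identical device explicitly in the proof of Corollary \ref{lastcorollary}, citing \cite[Thm.\ 2.7]{billconv}). The well-definedness and continuity of the limit function that you flag is supplied by Lemma \ref{welldef3}, so there is no gap.
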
 

The fact that the limit random variables in Theorem \ref{curvethm} and Corollary \ref{firstthm} are well-defined follows from the bound 
\begin{align}\label{prelest}
|R(V)|\ll (V\log\log V)^{\frac12}\qquad \text{ as } V\to\infty,
\end{align}
which holds almost surely, as a simple consequence of the law of the iterated logarithm. We also mention that the distribution of $\int_{0}^{\infty}V^{-2c}\,dR(V)$, for fixed $c\in(\frac14,\frac12)$, is well understood. In particular $\int_{0}^{\infty}V^{-2c}\,dR(V)$ has a strictly $\frac1{2c}$-stable distribution. We discuss these matters in detail in Section \ref{HC*}.

Let us point out the close formal similarity between the results above and our previous results in \cite{epstein1} on the value distribution of $E_n(\cdot,cn)$ to the right of the critical strip. In the language we have adopted here the main result in \cite{epstein1} states that for fixed $c>\frac{1}{2}$, the distribution of the random variable $V_n^{-2c}E_n(\cdot,cn)$ converges to the distribution of $\int_{0}^{\infty}V^{-2c}\,dN(V)$ as $n\to\infty$. Similar statements also hold for general finite dimensional distributions and the corresponding random functions. Hence, passing from the case to the right of the critical strip to the present one, we need only change from "$dN(V)$" to "$dR(V)$" in the limit variable.

A crucial ingredient in the proof of Theorem \ref{curvethm} is our result \cite{poisson} on the distribution of lengths of lattice vectors in a random lattice $L\in X_n$. It says that, as $n\to\infty$, the suitably normalized non-zero vector lengths in a random lattice $L\in X_n$ behave like the points of a Poisson process on the the positive real line. To be more precise: Given a lattice $L\in X_n$, we order its non-zero vectors by increasing lengths as $\pm\vecv_1,\pm\vecv_2,\pm\vecv_3,\ldots$, set $\ell_j=|\vecv_j|$ (thus $0<\ell_1\leq \ell_2\leq \ell_3\leq\ldots$), and define
\begin{align}\label{volumes}
 \mathcal V_j:=\frac{\pi^{n/2}}{\Gamma(\frac{n}{2}+1)}\ell_j^n\,,
\end{align}
so that $\mathcal V_j$ is the volume of an $n$-dimensional ball of radius $\ell_j$. The main result in \cite{poisson} now states that, as $n\to\infty$, the volumes $\{\mathcal V_j\}_{j=1}^{\infty}$ determined by a random lattice $L\in X_n$ converges in distribution to the points $\{T_j\}_{j=1}^{\infty}$ of the  Poisson process $\mathcal P$ on the positive real line with constant intensity $\frac{1}{2}$.

In view of this result from \cite{poisson}, the following definitions are natural. Given $L\in X_n$ and $V\geq0$ we let $N_n(V)$ denote the number of non-zero lattice points of $L$ in the closed $n$-ball of volume $V$ centered at the origin, and  define
\begin{align}\label{Rnvdef}
R_n(V):=N_n(V)-V.
\end{align}
Note that the above-mentioned result from \cite{poisson} implies in particular that $N_n(V)$ tends in distribution to $N(V)$ as $n\to\infty$, and $R_n(V)$ tends in distribution to $R(V)$, for any $V\geq0$.

A second crucial ingredient in our proof of Theorem \ref{curvethm} is a bound of similar quality as \eqref{prelest} for the corresponding function $R_n(V)$ on $X_n$.

\begin{thm}\label{Rnthm}
For all $\ve>0$ there exists $C_{\ve}>0$ such that for all $n\geq3$ and $C\geq1$ we have 
\begin{align*}
\text{Prob}_{\mu_n}\Big\{L\in X_n\,\,\big|\,\,|R_n(V)|\leq C_{\ve}(CV)^{\frac{1}{2}}(\log V)^{\frac{3}{2}+\ve},\quad \forall V\geq10\Big\}\geq1-C^{-1}.
\end{align*}
\end{thm}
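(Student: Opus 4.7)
The plan is to combine Rogers' mean-value formula (for the second, and if needed higher, moments of lattice-point counts) with a dyadic chaining argument that exploits the monotonicity of $N_n(V)$ in $V$.

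\textit{Step 1 (Second moment via Rogers).} Applying Rogers' mean-value formula to the indicator function of the spherical shell $\{\vecv\in\R^n:V_1<V_n|\vecv|^n\leq V_2\}$, I would establish
\begin{equation*}
\int_{X_n}\bigl(R_n(V_2)-R_n(V_1)\bigr)^2\,d\mu_n\leq\alpha(V_2-V_1),\qquad 0\leq V_1\leq V_2,\ n\geq 3,
\end{equation*}
with an absolute constant $\alpha>0$. The main technical point here is to verify that the divisibility/sublattice correction terms in Rogers' second-moment formula are uniformly $O(V_2-V_1)$, which is done by elementary volume estimates. Where available (in particular for $n\geq 3$, using Rogers' formula of order $r\leq n$), analogous bounds $E[|R_n(V_2)-R_n(V_1)|^p]\leq\beta_p(V_2-V_1)^{p/2}$ on higher moments are also established.

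\textit{Step 2 (Dyadic discretization and chaining).} Fix $\ve>0$. Take the coarse grid $V_k=2^k$ for $k\geq 4$, covering $V\geq 16$; the range $10\leq V<16$ is handled separately. Within each $[V_k,V_{k+1}]$ introduce a hierarchical sub-grid at scales $2^{k-\ell}$ for $\ell=0,1,\ldots,L_k$ (so level $\ell$ has $2^\ell$ equally spaced points). For $V\in[V_k,V_{k+1}]$, let $V^{(\ell)}$ be the level-$\ell$ grid point closest to $V$ from below, and write
\begin{equation*}
R_n(V)-R_n(V_k)=\sum_{\ell=1}^{L_k}\bigl(R_n(V^{(\ell)})-R_n(V^{(\ell-1)})\bigr)+\bigl(R_n(V)-R_n(V^{(L_k)})\bigr).
\end{equation*}
Apply Chebyshev-type bounds (from Step 1) to each chain increment with level-dependent thresholds $s_\ell$, and union-bound over the $2^\ell$ increments at each level.

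\textit{Step 3 (Leaf term).} Bound the residual $|R_n(V)-R_n(V^{(L_k)})|$ using the monotonicity of $N_n$: this quantity is at most $2^{k-L_k}$ (the length of the leaf interval) plus the lattice-point count $A^{\text{leaf}}$ in that interval. Choose $L_k$ so that $2^{k-L_k}$ is within the target threshold, and apply Chebyshev from Step 1 to control $A^{\text{leaf}}$.

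\textit{Step 4 (Assembly).} Choose $L_k$ and the level-wise thresholds $s_\ell$ so that $\sum_\ell s_\ell+2^{k-L_k}\leq C_\ve(CV_k)^{1/2}(\log V_k)^{3/2+\ve}$ while the total union-bound failure probability (summed over all $k\geq 4$, all chain levels $\ell$ and all grid/leaf intervals) is $\leq 1/C$. Combined with a Chebyshev bound on $|R_n(V_k)|$ at the dyadic points themselves (summable over $k$ with the same threshold), this yields the desired uniform bound for all $V\geq 16$, and hence (together with the short direct argument on $[10,16]$) for all $V\geq 10$.

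\textit{Main obstacle.} The principal difficulty lies in Step 4: arranging the chaining so that \emph{both} the level-threshold sum equals the target $(\log V)^{3/2+\ve}$ factor and the cumulative union-bound probability is summable. A Lagrange-type optimization with only the second-moment bound shows that the balance closes comfortably for $\ve>\tfrac{1}{2}$, but approaching smaller $\ve$ requires either exploiting higher moments from Rogers' formula (available because $n\geq 3$) or a slightly stronger maximal-inequality input at the leaf level. The exponent $3/2+\ve$ in the theorem is precisely the output of this balance: roughly $1+\ve$ from accumulating the union bound over dyadic scales and about $\tfrac{1}{2}$ from the chaining sum over levels inside each dyadic shell.
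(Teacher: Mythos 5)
Your overall architecture coincides with the paper's: a second-moment bound for increments of $R_n$ via Rogers' formula (the paper's Lemma \ref{Rnlem1}, giving $\mathbb E\bigl((R_n(A+\Delta)-R_n(A))^2\bigr)<5\Delta$ uniformly in $n\geq3$), a dyadic hierarchy inside each block $[A,2A]$ down to scale roughly $A^{1/2}$, monotonicity of $N_n$ to control the leaf interval, and a union bound over dyadic blocks. Steps 1--3 are sound and match the paper.

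The genuine gap is in Step 4, and you have in effect diagnosed it yourself: applying Chebyshev to each chain increment separately and union-bounding over the $2^{\ell}$ increments at level $\ell$ costs an extra factor of $(\log V)^{1/2}$, so with only second moments this route yields the exponent $2+\ve$ rather than $\frac32+\ve$ (your remark that ``the balance closes comfortably for $\ve>\frac12$''). Neither of your proposed remedies closes the gap. Rogers' mean value formula of order $r$ is available only for $r\leq n-1$, so for $n=3$ (which the theorem covers) no moment beyond the second is accessible this way; and the higher-moment bound $\mathbb E\bigl[|R_n(V_2)-R_n(V_1)|^p\bigr]\leq\beta_p(V_2-V_1)^{p/2}$ is false for small increments $\Delta=V_2-V_1$, since a single pair $\pm\vecv$ landing in the shell (an event of probability $\asymp\Delta$) already contributes $\asymp\Delta\gg\Delta^{p/2}$ to the $p$-th central moment; the fine scales of your hierarchy live precisely in this regime. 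The paper avoids the loss by a different bookkeeping of the same second-moment input: it applies Markov's inequality \emph{once per dyadic block} to the full square function $R(A)^2+\sum_{k}\sum_{j}\bigl(R((1+2^{-k}(j+1))A)-R((1+2^{-k}j)A)\bigr)^2$, whose expectation is $\ll A\log A$ because each level contributes $\ll A$ and there are $\ll\log A$ levels, and then bounds any individual chain \emph{pathwise} by Cauchy--Schwarz over its $\leq k_0+1\ll\log A$ terms. This gives $|R_n(V)|\ll(CA)^{1/2}\log A$ on an event of probability $\geq1-C^{-1}$, and the union bound over blocks with $C=Kj^{1+\ve}$ then produces exactly $(\log V)^{3/2+\ve}$ for every $\ve>0$, with no input beyond the variance bound. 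I recommend replacing your per-increment Chebyshev step with this global-Markov-plus-pathwise-Cauchy--Schwarz device; the rest of your outline then goes through.
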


We stress in particular that $C_{\ve}$ is independent of $n$.

Theorem \ref{Rnthm} is interesting not only for being an important technical part of the proof of Theorem \ref{curvethm}, but also for its connection with the famous circle problem generalized to dimension $n$ and general ellipsoids. Given $V>0$, $n\geq2$ and $L\in X_n$ the problem asks for the number $\mathcal N(V)=1+N_n(V)$ of  lattice points of $L$ in the closed $n$-ball of volume $V$ centered at the origin. It is well-known that $\mathcal N(V)$ is asymptotic to the volume $V$ of this ball. Hence $1+R_n(V)$ equals the remainder term in this asymptotic relation, and Theorem \ref{Rnthm} implies that this remainder is $\ll V^{\frac12}(\log V)^{\frac{3}{2}+\ve}$ as $V\to\infty$, for almost every $L\in X_n$.

As far as we are aware, the fact that almost every $L\in X_n$ satisfies $|R_n(V)|\ll V^{\frac12}(\log V)^{\frac32+\ve}$, or just $|R_n(V)|\ll V^{\frac12+\ve}$, as $V\to\infty$, has not been pointed out previously in the literature. We mention a result from 1928 by Jarnik \cite[Satz 3]{jarnik}, which in our notation says that $|R_n(V)|\ll V^{\frac12+\ve}$ holds for almost every orthogonal lattice $L$ (viz.\ a lattice which has an orthogonal $\Z$-basis), 
when $n\geq4$. Also in this vein we mention the impressive recent work by Bentkus and G\"otze \cite{BG1}, \cite{BG2} and G\"otze \cite{gotze}, which imply strong explicit bounds on $R_n(V)$ for an arbitrary given lattice $L$. In particular, \cite{gotze} implies that $|R_n(V)|\ll V^{1-\frac2n}$ holds for every $L\in X_n$ when $n\geq5$, and furthermore the stronger bound $R_n(V)=o(V^{1-\frac2n})$ as $V\to\infty$ whenever $L$ is irrational in the sense that the Gram matrix for some $\Z$-basis of $L$ (equivalently: for every $\Z$-basis of $L$) is not proportional to a matrix with integer entries only.

\vspace{6pt}

In Section \ref{heightsec} we extend the result in Theorem \ref{curvethm} to the case $c_2=\frac12$. In order for this to make sense we have to subtract the singular part of $V_n^{-2c}E_n(\cdot,cn)$ from both the random functions appearing in Theorem \ref{curvethm}. A precise statement of this limit value distribution result can be found in Theorem \ref{curvethm2}. As an application we prove a result on the asymptotic value distribution of the height function $h_n$. First, in Lemma \ref{Z0lemma}, we show that the limit 
\begin{align}\label{Z0intro}
Z_0:=\lim_{c\to\frac{1}{2}-}\bigg(\int_0^{\infty}V^{-2c}\,dR(V)+\frac{1}{1-2c}\bigg)
\end{align}
exists almost surely. Recall that it was proved in \cite[Thm.\ 3]{sst} that the random variable $h_n(L)$ converges in distribution to the constant $\log(4\pi)-\gamma+1$ (cf.\ \eqref{heightresult} above). Relating $Z_0$ to a similar limit involving $E_n(L,cn)$ and using the functional equation \eqref{functionaleq} and the formula \eqref{heightdef} for $h_n$, we obtain the following much more precise convergence result: 

\begin{thm}\label{myheight}
The random variable 
\begin{align*}
n\Big(h_n(L)-\big(\log(4\pi)-\gamma+1\big)\Big)+\log n
\end{align*}
converges in distribution to 
\begin{align*}
2Z_0-\log\pi-1
\end{align*}
as $n\to\infty$.
\end{thm}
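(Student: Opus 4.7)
The plan is to express $h_n(L)$ via the functional equation in terms of the constant term $B(L)$ in the Laurent expansion of $E_n(L,s)$ at $s=n/2$, to identify $V_n^{-1}B(L)-\log V_n$ with the boundary value at $c=\tfrac12$ of the regularized random function controlled by Theorem \ref{curvethm2}, and finally to extract the asymptotics via Stirling. Using \eqref{functionaleq} to rewrite
\[
 E_n(L^*,s) = \pi^{2s-n/2}\,\frac{\Gamma(n/2-s)}{\Gamma(s)}\,E_n(L,n/2-s)
\]
and expanding each factor to first order in $s$ at $s=0$ --- with the Laurent expansion $E_n(L,n/2-s) = -\rho_n s^{-1}+B(L)+O(s)$, where $\rho_n=\pi^{n/2}/\Gamma(n/2)=V_n n/2$, together with the Taylor expansions of $\pi^{2s-n/2}$, $\Gamma(n/2-s)$ and $1/\Gamma(s)=s+\gamma s^2+O(s^3)$ --- and then substituting into \eqref{heightdef} and using $\pi^{-n/2}\Gamma(n/2)=2/(nV_n)$ gives the clean identity
\[
 h_n(L) = 2\log 2 + \psi(n/2) - \gamma + \frac{2}{n}V_n^{-1}B(L).
\]

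Next I would relate $V_n^{-1}B(L)$ to the quantity controlled by Theorem \ref{curvethm2}. Multiplying the same Laurent expansion by $V_n^{-2c}$ and adding $(1-2c)^{-1}$ gives, for $c$ near $\tfrac12$,
\[
 V_n^{-2c}E_n(L,cn) + \frac{1}{1-2c} = \frac{1-V_n^{1-2c}}{1-2c} + V_n^{-2c}B(L) + O\bigl(n(c-\tfrac12)\bigr),
\]
and letting $c\to\tfrac12^-$, using that $(1-V_n^{1-2c})/(1-2c)\to -\log V_n$, shows that the left-hand side extends by continuity to the value $-\log V_n + V_n^{-1}B(L)$ at $c=\tfrac12$. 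Evaluating Theorem \ref{curvethm2} at the endpoint $c=\tfrac12$ and invoking Lemma \ref{Z0lemma} for the boundary value of the limit function therefore yields the convergence in distribution
\[
 V_n^{-1}B(L) - \log V_n \;\longrightarrow\; Z_0 \qquad (n\to\infty).
\]

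Finally I would feed this into the identity for $h_n(L)$ and read off the asymptotics via Stirling, using $\log V_n = \tfrac{n}{2}\log(2\pi e/n) - \tfrac{1}{2}\log(\pi n) + O(1/n)$ and $\psi(n/2) = \log n - \log 2 - 1/n + O(1/n^2)$. Collecting terms, the leading deterministic pieces cancel and one obtains
\[
 h_n(L) - \bigl(\log(4\pi)-\gamma+1\bigr) = \frac{2\bigl(V_n^{-1}B(L)-\log V_n\bigr) - 1 - \log(\pi n)}{n} + O(1/n^2);
\]
multiplying by $n$ and adding $\log n$ gives
\[
 n\bigl(h_n(L)-(\log(4\pi)-\gamma+1)\bigr) + \log n = 2\bigl(V_n^{-1}B(L)-\log V_n\bigr) - 1 - \log\pi + O(1/n),
\]
and Slutsky's theorem combined with the previous step then delivers the announced limit $2Z_0-\log\pi-1$.

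The analytic content is packaged into Theorem \ref{curvethm2} (extending the functional convergence of Theorem \ref{curvethm} up to the boundary $c=\tfrac12$ after subtracting the singular part) and Lemma \ref{Z0lemma} (existence of $Z_0$); once those are available, the present theorem is essentially a bookkeeping exercise combining the functional equation of $E_n(L,s)$ at $s=0$ with Stirling's expansions for $\psi(n/2)$ and $\log V_n$, and the principal hazard is simply tracking the cancellation of the leading $\log n$ and $\log(2\pi)$ contributions.
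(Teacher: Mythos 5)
Your proposal is correct and follows essentially the same route as the paper: expand the functional equation at $s=0$ (equivalently at $s=\frac n2$) to express $h_n(L)$ through the constant term of the Laurent expansion of $E_n(L,s)$ at $s=\frac n2$, identify that quantity with $\widehat E_n(L,\frac n2)=\lim_{c\to\frac12}\bigl(V_n^{-2c}E_n(L,cn)+\frac1{1-2c}\bigr)$, apply Theorem \ref{curvethm2} at the endpoint $c=\frac12$ (the paper's Corollary \ref{lastcorollary}), and finish with Stirling. The only cosmetic difference is that you name the constant term $B(L)$ explicitly, whereas the paper works directly with the limit $\widehat E_n(L,\frac n2)$; the computations and the invoked ingredients are the same.
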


Returning to the question of whether there exists a lattice $L\in X_n$ such that $E_n(L,s)<0$ for $0<s<\frac{n}{2}$, we note that Theorem \ref{curvethm} and Theorem \ref{curvethm2} have the following corollary.

\begin{cor}\label{negativity}
For any fixed $\frac14<c_1<c_2\leq\frac{1}{2}$, the limit 
\begin{align*}
\lim_{n\to\infty}\text{Prob}_{\mu_n}\Big\{L\in X_n\,\big|\, E_n(L,s)<0 \text{  for all  } s\in[c_1n,c_2n]\setminus\{\sfrac12n\}\Big\}
\end{align*} 
exists, and equals
\begin{align*}
f(c_1,c_2):=\text{Prob}\bigg\{\int_{0}^{\infty}V^{-2c}\,dR(V)<0 \text{  for all  } c\in[c_1,c_2]\setminus\{\sfrac12\}\bigg\}.
\end{align*}
Moreover, for all $\frac14<c_1<c_2\leq\frac{1}{2}$ the probability $f(c_1,c_2)$ satisfies $0<f(c_1,c_2)<1$.
\end{cor}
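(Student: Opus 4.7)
The plan is to combine the weak convergence of Theorems \ref{curvethm} and \ref{curvethm2} with the Portmanteau theorem, together with direct analysis of the limit random function. Write $X(c):=\int_0^\infty V^{-2c}\,dR(V)$ and $K:=[c_1,c_2]$. For $c_2<\tfrac12$ the event in question is precisely $\{V_n^{-2c}E_n(\cdot,cn)\in U\}$, where $U:=\{f\in C(K):f<0\text{ on }K\}$ is open in sup norm. For $c_2=\tfrac12$ I would rephrase the event via $g_n(c):=(1-2c)V_n^{-2c}E_n(\cdot,cn)=-1+(1-2c)H_n(c)$, with $H_n$ the regularised function from Theorem \ref{curvethm2}; this extends continuously to $K$ with $g_n(\tfrac12)=-1$ and converges in $C(K)$ to $g_\infty(c)=(1-2c)X(c)$ (extended by $g_\infty(\tfrac12)=-1$), turning the event into $\{g_n<0\text{ on }K\}$, again open. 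Portmanteau then yields $\liminf_n\text{Prob}_{\mu_n}(\cdot)\ge f(c_1,c_2)$, so existence of the limit and its identification with $f(c_1,c_2)$ reduces to $P(X\in\partial U)=0$ (and the analogous statement for $g_\infty$).

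The boundary satisfies $\partial U\subseteq\{\max_KX=0\}$. For almost every Poisson realisation, $X(c)$ is real analytic on $(\tfrac14,\tfrac12)$: the a.s.\ bound \eqref{prelest} dominates the integrand in $X(c)=2c\int_0^\infty V^{-2c-1}R(V)\,dV$ uniformly on compact $c$-subintervals. If $X\le 0$ on $K$ and $X(c^*)=0$ for some interior $c^*$, analyticity forces $X'(c^*)=0$. Conditioning on $(T_j)_{j\ge 2}$, we have $X(c)=2T_1^{-2c}+Y(c)$ with $Y$ depending only on $(T_j)_{j\ge 2}$; eliminating $T_1$ from $X(c^*)=X'(c^*)=0$ yields a single analytic equation in $c^*$, whose a.s.\ discrete solution set pins $T_1$ to a discrete (hence Lebesgue-null) set of values. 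Since $T_1\,|\,(T_j)_{j\ge 2}$ is uniform on $(0,T_2)$, hence absolutely continuous, this conditional event has probability zero. The endpoint cases $c^*=c_i$ are handled by the non-degenerate $\tfrac1{2c_i}$-stable law of $X(c_i)$, giving $P(X(c_i)=0)=0$.

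The upper bound $f(c_1,c_2)<1$ follows from $\{X<0\text{ on }K\setminus\{\tfrac12\}\}\subseteq\{X(c_1)<0\}$: the representation $X(c_1)=2c_1\int_0^\infty V^{-2c_1-1}R(V)\,dV$ together with Fubini (integrable since $c_1\in(\tfrac14,\tfrac12)$) yields $E[X(c_1)]=0$, and non-degeneracy of the stable law gives $P(X(c_1)>0)>0$, hence $P(X(c_1)<0)<1$.

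The lower bound $f(c_1,c_2)>0$ I would establish by exhibiting a positive-probability event on which $X<0$ uniformly on $K\setminus\{\tfrac12\}$. On $\{T_1>M\}$, $R(V)=-V$ on $(0,M)$ and by the Markov property $R(V)=\widetilde R(V-M)-M$ on $(M,\infty)$ with $\widetilde R$ an independent copy of $R$, giving
\begin{align*}
X(c)=-\frac{M^{1-2c}}{1-2c}+2c\int_0^\infty(M+u)^{-2c-1}\widetilde R(u)\,du.
\end{align*}
The infimum of the main term over $K\setminus\{\tfrac12\}$ is of order $M^{1-2c_2}$ when $c_2<\tfrac12$, and of order $e\log M$ (attained near $c=\tfrac12-\tfrac1{2\log M}$) when $c_2=\tfrac12$. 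The bound $E|\widetilde R(u)|\le\sqrt{2u}$ gives $E\bigl[\int_0^\infty(M+u)^{-2c-1}|\widetilde R(u)|\,du\bigr]\ll M^{\frac12-2c_1}$ uniformly in $c\in K$; since $c_2-c_1<\tfrac14$ is automatic, this is of strictly smaller order than the main term, and Markov's inequality makes the fluctuation at most half the main term with conditional probability tending to $1$ as $M\to\infty$. Combined with $P(T_1>M)=e^{-M/2}>0$ this produces the required event. The main obstacle is the boundary estimate of paragraph two; the analyticity-plus-conditioning argument is its technical heart.
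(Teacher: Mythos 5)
Your overall architecture (Portmanteau reduction to $\mu_H(\partial U)=0$, the $(1-2c)$-regularisation at $c_2=\frac12$, and the two halves of $0<f(c_1,c_2)<1$) is sound and, except for the boundary step, essentially matches the paper: the paper also conditions on $\{x_1>A\}$ and uses the restriction/independence property of the Poisson process for $f>0$, and your mean-zero-plus-nondegenerate-stable-law argument for $f<1$ is a legitimate alternative to the paper's conditioning on $N(A)=B$. The genuine gap is in your boundary estimate, which you correctly identify as the technical heart. After eliminating $T_1$ from $X(c^*)=X'(c^*)=0$ you obtain an equation $\Phi(c^*)=0$ with $\Phi(c)=Y'(c)-\frac{Y(c)}{c}\log\bigl(-Y(c)/2\bigr)$ depending only on $(T_j)_{j\geq2}$, and you simply assert that its solution set is a.s.\ discrete. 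A real-analytic function has discrete zero set only if it is not identically zero on the interval, and ruling out $\Phi\equiv0$ (i.e.\ that $Y$ solves the ODE $Y'=\frac Yc\log(-Y/2)$ on a subinterval with positive probability) is not addressed; if $\Phi$ vanishes identically, the ``pinned'' values of $T_1$ sweep out a set of positive Lebesgue measure and the conditioning argument collapses. This is exactly the kind of degeneracy the tangency approach must exclude, and excluding it requires further work (e.g.\ iterating the conditioning to $(T_j)_{j\geq3}$ and analysing the one-parameter solution family of the ODE), which you have not supplied.

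The paper avoids this entirely with a monotonicity argument that fits your own conditioning: writing $X(c)=2u^{-2c}+Y(c)$ with $u=T_1$ and $Y$ determined by $(T_j)_{j\geq2}$, the map $u\mapsto 2u^{-2c}$ is strictly decreasing, uniformly over $c$ in the compact set $K$ (for $0<u'<u$ one has $2u'^{-2c}-2u^{-2c}\geq\delta>0$ for all $c\in K$). Hence $\sup_{c\in K}X(c)$ strictly decreases as $u$ increases, so conditionally on $(T_j)_{j\geq2}$ there is at most one value of $u$ with $\sup_K X=0$; since the conditional law of $T_1$ is uniform on $(0,T_2)$, hence atomless, the conditional probability is zero and $\mu_H(\partial U)=0$ follows (the same argument applies on $[c_1,\frac12)$ in the $c_2=\frac12$ case, using $\lim_{c\to\frac12-}X(c)=-\infty$). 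I recommend you replace the analyticity-plus-double-zero argument with this; it needs neither differentiability of $X$ nor any nondegeneracy claim about the eliminated equation, and it also subsumes your separate treatment of the endpoint cases.
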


In particular, for any given $\ve>0$ the probability that
\begin{align*}
E_n(L,s)<0\quad\text{for all}\quad s\in\big[(\sfrac14+\ve)n,\sfrac12n\big)
\end{align*}
holds tends to a positive limit as $n\to\infty$! However, we also have the following results.

\begin{thm}\label{independentgaussians}
Fix $m\in\Z_{\geq1}$ and let $c_j=\frac14+\eta_j$ with $\eta_j\in(0,\frac14)$ for $1\leq j\leq m$. If $(\eta_1,\ldots,\eta_m)$ tends to the zero vector in $\R^m$ in such a way that $\eta_j/\eta_{j+1}\to0$ for each $1\leq j\leq m-1$, then the $m$-dimensional random vector 
\begin{align*}
\bigg(\big(2c_1-\sfrac12\big)^{\frac12}\int_{0}^{\infty}V^{-2c_1}\,dR(V),\ldots,\big(2c_m-\sfrac12\big)^{\frac12}\int_{0}^{\infty}V^{-2c_m}\,dR(V)\bigg)
\end{align*}
converges in distribution to the distribution of $m$ independent $N(0,1)$-variables.
\end{thm}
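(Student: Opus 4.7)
The plan is to compute the joint characteristic function of the random vector and show it converges to $\exp\bigl(-\tfrac{1}{2}\sum_j t_j^2\bigr)$, from which the stated convergence in distribution follows by L\'evy's continuity theorem. Since $R(V)=N(V)-V$ is a centered compound Poisson process with jumps of size $2$ arriving at rate $\tfrac12$, the L\'evy--Khinchine formula gives
\begin{align*}
\log E\bigg[\exp\bigg(i\sum_{j=1}^m t_j \sqrt{2\eta_j}\int_0^\infty V^{-2c_j}\,dR(V)\bigg)\bigg] = \frac{1}{2}\int_0^\infty\bigl(e^{2if(V)}-1-2if(V)\bigr)\,dV,
\end{align*}
with $f(V):=\sum_j t_j\sqrt{2\eta_j}\,V^{-2c_j}$. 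The task is to show the right-hand side tends to $-\tfrac{1}{2}\sum_j t_j^2$.

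The key substitution is $u=\log V$, under which $f(e^u)=e^{-u/2}h(u)$ with $h(u):=\sum_j t_j\sqrt{2\eta_j}\,e^{-2\eta_j u}$. On the bulk region $u\geq 0$, one has $|f|\leq|h|\ll\sqrt{\eta}$ (writing $\eta:=\max_j\eta_j$), and the Taylor bound $|e^{ix}-1-ix+x^2/2|\leq|x|^3/6$ reduces the matter to evaluating
\begin{align*}
-\int_0^\infty h(u)^2\,du = -\sum_{j,k=1}^m \frac{t_j t_k\sqrt{\eta_j\eta_k}}{\eta_j+\eta_k}.
\end{align*}
The diagonal sum equals $-\tfrac{1}{2}\sum_j t_j^2$ exactly, while each off-diagonal term with $j<k$ equals $-t_j t_k\bigl(\sqrt{\eta_k/\eta_j}+\sqrt{\eta_j/\eta_k}\bigr)^{-1}\to 0$ by the hypothesis $\eta_j/\eta_k\to 0$. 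The cubic error is $O(\eta^{3/2})$, since $\int_0^\infty e^{-u/2}|h|^3\,du = O(\eta^{3/2})$.

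The complementary region $u<0$ contributes negligibly. Splitting at $u^*:=2\log(C\sqrt{\eta})$ for a suitable constant $C$ so that $|f(e^u)|\leq 1$ for $u\in[u^*,0]$, on $[u^*,0]$ one applies $|e^{2if}-1-2if|\leq 2f^2$ and the uniform bound $|h|\ll\sqrt{\eta}$ to obtain a contribution of size $O(\eta|\log\eta|)$, while on $(-\infty,u^*)$ the bound $|e^{2if}-1-2if|\leq 6|f|$ combined with $|h(u)|\leq C_1\sqrt{\eta}\,e^{-2\eta u}$ and a direct integration gives $O(\eta)$. The imaginary part of the exponent is handled by the same bounds; the leading behavior $-\tfrac{4}{3}\int_0^\infty f^3\,dV$ on the bulk region is $O(\eta^{3/2})$. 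All required estimates reduce to the elementary observation that $\eta_j|\log\eta_m|\to 0$ for $j\leq m$, which follows from $\eta_j\leq\eta_m$ and $\eta_m|\log\eta_m|\to 0$.

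The main obstacle is to organize the error analysis so that all bounds depend only on $\max_j\eta_j$ and the ratios $\eta_j/\eta_k$, independently of the specific rates at which the individual $\eta_j$ tend to zero. The structural heart of the argument is the observation that, in the logarithmic variable $u$, the quadratic form $\int_0^\infty h(u)^2\,du$ asymptotically diagonalizes whenever the $\eta_j$'s have widely separated scales; this is the precise source of the asymptotic independence.
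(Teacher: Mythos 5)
Your argument is correct, and it takes a genuinely different route from the paper. The paper proves this theorem by the method of moments: it first establishes a combinatorial formula (Proposition \ref{newlem}, a variant of \cite[Prop.\ 3]{poisson}) expressing joint moments of integrals against $dR$ as sums over partitions with no singletons, applies it to the truncated variables $H(c_j,\delta)=\int_\delta^\infty V^{-2c_j}\,dR(V)$, and shows that in the limit $\eta_j/\eta_{j+1}\to0$ only the ``diagonal'' pair partitions survive, so the joint moments converge to those of independent standard Gaussians; the truncation at $\delta$ is then removed by conditioning on $N(\delta)=0$. You instead compute the joint characteristic function exactly via the L\'evy--Khinchine/Campbell formula for the compensated Poisson integral and verify that its logarithm converges to $-\tfrac12\sum_j t_j^2$; the heart of the matter in both proofs is the same covariance computation, which in your logarithmic variable reads $\int_0^\infty h(u)^2\,du=\sum_{j,k}t_jt_k\sqrt{\eta_j\eta_k}/(\eta_j+\eta_k)$ and whose off-diagonal terms vanish precisely under the scale-separation hypothesis (this is the analytic counterpart of the paper's partition condition \eqref{normal5}). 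Your approach is more direct --- it avoids the partition combinatorics, the uniform-integrability step needed to pass moments through the $A\to\infty$ limit, and the final appeal to moment-determinacy of the Gaussian --- at the cost of a careful three-region error analysis near $V=0$, which you carry out correctly (note that $|e^{2if}-1-2if|\le 4|f|$ holds unconditionally, so your bound on $(-\infty,u^*)$ needs no lower bound on $|f|$ there). The paper's route has the side benefit that the moment formula \eqref{normal2} and Corollary \ref{newcor} are of independent interest and parallel the machinery of \cite{epstein1}. The only point you should make explicit is the identification of the improper integral $H(c_j)$, as defined in Section \ref{HC*}, with the compensated Poisson integral to which the L\'evy--Khinchine formula applies; this is routine (the integrability condition $\int_0^\infty\min(V^{-2c},V^{-4c})\,dV<\infty$ holds for $c\in(\tfrac14,\tfrac12)$, and the paper itself invokes the analogous representation from \cite[Thm.\ 1.4.5]{ST}), but it is the one hypothesis your computation silently uses.
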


\begin{cor}
For each fixed $c_2\in\big(\frac14,\frac12\big]$, the probability $f(c_1,c_2)$ tends to zero as $c_1\to\frac14+$. 
\end{cor}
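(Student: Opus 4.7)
The plan is to apply Theorem~\ref{independentgaussians} at a family of test points in $[c_1,c_2]$ approaching $\frac14$ at carefully chosen rates. More precisely, I aim to show that for every fixed $m\in\Z_{\geq 1}$,
\begin{align*}
\limsup_{c_1\to\frac14+}f(c_1,c_2)\leq 2^{-m},
\end{align*}
from which the corollary follows upon letting $m\to\infty$.

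For the test points, set $\ve:=c_1-\frac14>0$ and $\eta_j:=\ve^{1/j}$, $c_j^{(\ve)}:=\frac14+\eta_j$ for $1\leq j\leq m$. Since $\ve\in(0,1)$ we have $\ve=\eta_1<\eta_2<\cdots<\eta_m=\ve^{1/m}$, so $c_1=c_1^{(\ve)}<c_2^{(\ve)}<\cdots<c_m^{(\ve)}$; moreover, for $\ve$ below a threshold depending on $c_2$ and $m$ we also have $c_m^{(\ve)}\leq c_2$ (with strict inequality when $c_2=\frac12$, so as to avoid the singular point). Hence $\{c_j^{(\ve)}\}_{j=1}^m\subset[c_1,c_2]\setminus\{\frac12\}$, and the definition of $f(c_1,c_2)$ immediately yields
\begin{align*}
f(c_1,c_2)\leq\text{Prob}\bigg\{\int_0^\infty V^{-2c_j^{(\ve)}}\,dR(V)<0,\quad\forall\,1\leq j\leq m\bigg\}.
\end{align*}

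To apply Theorem~\ref{independentgaussians} I verify its hypothesis: indeed $\eta_j/\eta_{j+1}=\ve^{1/(j(j+1))}\to 0$ as $\ve\to 0+$. The theorem then tells me that the rescaled vector $\big((2c_j^{(\ve)}-\frac12)^{1/2}\int_0^\infty V^{-2c_j^{(\ve)}}\,dR(V)\big)_{j=1}^m$ converges in distribution to $(G_1,\ldots,G_m)$ with the $G_j$ i.i.d.\ standard normal. Since each normalization factor is strictly positive, the event in the display above coincides with the preimage under the rescaling of the open orthant $\{x\in\R^m:x_j<0,\,\forall j\}$, which is a continuity set of the product Gaussian law. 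Taking $\ve\to 0+$ then gives $\limsup_{\ve\to 0+}f(\frac14+\ve,c_2)\leq\text{Prob}\{G_j<0,\,\forall j\}=2^{-m}$, as required. The entire argument rests on top of Theorem~\ref{independentgaussians}, so no substantive obstacle remains; the only care needed is to choose the rates $\eta_j=\ve^{1/j}$ so that both $\eta_1\geq c_1-\frac14$ (ensuring $c_j^{(\ve)}\in[c_1,c_2]$) and the rapid-decay condition $\eta_j/\eta_{j+1}\to 0$ are simultaneously met.
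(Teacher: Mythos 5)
Your proof is correct and follows exactly the route the paper intends: the corollary is stated as an immediate consequence of Theorem~\ref{independentgaussians}, obtained by bounding $f(c_1,c_2)$ by the probability of simultaneous negativity at $m$ test points whose offsets $\eta_j$ satisfy the ratio condition, passing to the Gaussian limit to get the bound $2^{-m}$ (the open orthant being a continuity set), and letting $m\to\infty$. Your explicit choice $\eta_j=\ve^{1/j}$ is a valid instantiation of the required rates, and the verification that the test points lie in $[c_1,c_2]\setminus\{\tfrac12\}$ for small $\ve$ is handled properly.
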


As an immediate consequence it follows that for any $\ve>0$ we have
\begin{align*}
\text{Prob}_{\mu_n}\Bigl\{L\in X_n\:\big|\:
E_n(L,s)<0\text{ for all }s\in\bigl[\sfrac14n,(\sfrac14+\ve)n\bigr]\Bigr\}\to0
\end{align*}
as $n\to\infty$. In particular this entails that the probability that $E_n(L,s)$ has a zero in $(0,\infty)$ tends to one as $n\to\infty$. Hence the question of Sarnak and Str\"ombergsson is rather delicate!

Finally we remark that the precise behavior of the random variable $E_n(L,cn)$ for $c=\frac14$ or $c$ tending to $\frac14$ as $n\to\infty$ remains very much an open and exciting question, which we hope to tackle in future work.

\section{The random variables $H(c)$ and $Z_0$}\label{HC*}

\subsection{The random variable $H(c)$}\label{HC}

In this section we prove some basic results about the random variable 
\begin{align}\label{Hcdef}
H(c):=\int_{0}^{\infty}V^{-2c}\,dR(V),
\end{align}
which appears as the limit variable in Theorem \ref{curvethm} and Corollary \ref{firstthm}.

Recall from the introduction that, for a Poisson process $\mathcal P=\big\{\widehat N(V),V\geq0\big\}$ on the positive real line with constant intensity $\frac{1}{2}$, we let $N(V):=2\widehat N(V)$ and define
\begin{align*}
R(V):=N(V)-V,\qquad V\geq0.
\end{align*}
We also recall that $\widehat N(V)$ denotes the number of points of $\mathcal P$ falling in the interval $(0,V]$ and that $\widehat N(V)$ is Poisson distributed with expectation value $\frac{1}{2}V$. In fact, since furthermore $\widehat N(V_2)-\widehat N(V_1)$ is Poisson distributed with expectation value $\frac{1}{2}(V_2-V_1)$, it follows that $\mathbb E\big(R(V_2)-R(V_1)\big)=0$ and 
\begin{align}\label{variance}
\mathbb E\Big(\big(R(V_2)-R(V_1)\big)^2\Big)=\text{Var}\big(R(V_2)-R(V_1)\big)=\text{Var}\big(N(V_2)-N(V_1)\big)=2(V_2-V_1)
\end{align} 
for all $0\leq V_1<V_2$. We let $T_1, T_2,T_3,\ldots$ denote the points of $\mathcal P$ ordered in such a way that $0<T_1<T_2< T_3<\ldots$. Hence the sequence $\{T_j\}_{j=1}^{\infty}$ belongs to the space 
\begin{align*}
\Omega:=\Big\{\vecx=\{x_j\}_{j=1}^{\infty}\in(\R_{\geq0})^{\infty}\,\big|\, 0<x_1<x_2<x_3<\ldots
\Big\}. 
\end{align*}
We equip $\Omega$ with the subspace topology induced from the product topology on $(\R_{\geq0})^{\infty}$. We denote the distribution of $\mathcal{P}$ on $\Omega$ by $\textbf{P}$ and note that $\textbf{P}$ is actually a Borel probability measure on $\Omega$.

To begin with we need an estimate of $R(V)$. Using the law of the iterated logarithm (see \cite{HW}) it is straightforward to show that with probability one we have
\begin{align*}
\limsup_{V\to\infty}\frac{|R(V)|}{(V\log\log V)^{\frac12}}=2.
\end{align*} 
In particular it follows that with probability one there exists a constant $C>2$ (that depends on $\vecx\in\Omega$) such that
\begin{align}\label{Riteratedlog}
|R(V)|<C(V\log\log V)^{\frac12}, \qquad \forall V\geq10.
\end{align}

In the following lemma we give a simple proof of a slightly weaker bound than \eqref{Riteratedlog}, which as input only uses the monotonicity of $N(V)$ and the variance relation \eqref{variance}. This proof has the advantage that it easily generalizes to the situation in Theorem \ref{Rnthm} (see Section \ref{Rnsec}).

\begin{lem}\label{Rest}
For all $\ve>0$ there exists $C_{\ve}>0$ such that for all $C\geq1$ we have 
\begin{align*}
\textbf{P}\,\Big\{|R(V)|\leq C_{\ve}(CV)^{\frac{1}{2}}(\log V)^{\frac{3}{2}+\ve},\quad \forall V\geq10\Big\}\geq1-C^{-1}.
\end{align*}
\end{lem}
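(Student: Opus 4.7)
The plan is to combine a dyadic decomposition of $[10,\infty)$ with a maximal-inequality bound within each block, then fill in the gaps between discrete checkpoints by monotonicity of $N$. Fix $\ve>0$ and $C\geq 1$, and write $A:=C_{\ve}C^{1/2}$ for a constant $C_{\ve}=C_{\ve}(\ve)$ to be chosen sufficiently large at the end.

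First, for each integer $k\geq k_0$ (with $k_0$ the least integer satisfying $2^{k_0}\geq 10$), I would partition the dyadic block $I_k=[2^k,2^{k+1}]$ into $m_k:=\lceil 2^{k/2}k^{-3/2-\ve}\rceil$ equal subintervals of length $h_k:=2^k/m_k$, with checkpoints $V_k^{(j)}:=2^k+jh_k$, $0\leq j\leq m_k$. The point of this choice is that $h_k\ll 2^{k/2}k^{3/2+\ve}$, so the interpolation error will match the target scale on $I_k$. Next I would establish two probability estimates at level $k$. The first, by Chebyshev applied to \eqref{variance} at the anchor, reads $\textbf{P}(|R(2^k)|\geq\frac{A}{3}\cdot 2^{k/2}k^{3/2+\ve})\leq 18A^{-2}k^{-3-2\ve}$. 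The second exploits that $R$ has mean-zero independent increments (since $\mathcal{P}$ is Poisson): letting $S_j:=R(V_k^{(j)})-R(V_k^{(0)})$, Kolmogorov's maximal inequality together with the total-variance identity $2m_kh_k=2^{k+1}$ supplied by \eqref{variance} gives $\textbf{P}(\max_{0\leq j\leq m_k}|S_j|\geq\frac{A}{3}\cdot 2^{k/2}k^{3/2+\ve})\leq 18A^{-2}k^{-3-2\ve}$.

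Now monotonicity of $N$ enters. For $V\in[V_k^{(j)},V_k^{(j+1)}]$ the sandwich $N(V_k^{(j)})\leq N(V)\leq N(V_k^{(j+1)})$ forces $|R(V)|\leq\max(|R(V_k^{(j)})|,|R(V_k^{(j+1)})|)+h_k\leq|R(2^k)|+\max_i|S_i|+h_k$, and on the complement of the two bad events above this is $\leq A\cdot 2^{k/2}k^{3/2+\ve}$. Since $V^{1/2}\geq 2^{k/2}$ and $\log V\geq k\log 2$ for $V\in I_k$, this rearranges into $|R(V)|\leq C_{\ve}(CV)^{1/2}(\log V)^{3/2+\ve}$ after absorbing $(\log 2)^{-3/2-\ve}$ into $C_{\ve}$. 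Summing the two failure probabilities over $k\geq k_0$ produces a total of at most $C'A^{-2}=C'(C_{\ve}^2C)^{-1}$ for an absolute constant $C'$, so taking $C_{\ve}$ large in terms of $\ve$ makes this at most $C^{-1}$, as required.

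The main obstacle is the correct choice of $m_k$: if $m_k$ is too coarse then the interpolation error $h_k$ alone exceeds the target, while if $m_k$ is too fine then a naive Chebyshev combined with a union bound over the $m_k$ checkpoints diverges. Kolmogorov's maximal inequality is the essential tool that converts $m_k$ separate Chebyshev estimates into a single variance-type bound, and it is precisely this step which uses the independence of Poisson increments on top of the raw variance relation \eqref{variance} and the monotonicity of $N$. In Theorem \ref{Rnthm} this independence is lost, and one must expect the analogous argument there to replace Kolmogorov's inequality by a substitute that is compatible with the structural features still available on $X_n$.
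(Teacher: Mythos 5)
Your argument is correct, but it takes a genuinely different route from the paper's. You control each dyadic block $[2^k,2^{k+1}]$ by combining a Chebyshev bound at the anchor with Kolmogorov's maximal inequality over equispaced checkpoints, and the numerics check out: the total variance $2m_kh_k=2^{k+1}$ gives the stated $18A^{-2}k^{-3-2\ve}$ per block, the interpolation error $h_k\leq 2^{k/2}k^{3/2+\ve}$ matches the target scale, and the failure probabilities sum to $O(A^{-2})=O(C_\ve^{-2}C^{-1})$. (Two cosmetic points: your blocks start at $2^{k_0}=16$ rather than $10$, so shift the dyadic grid or handle $[10,16)$ separately; and the absorption $\tfrac{2A}{3}+1\leq A$ needs $C_\ve\geq3$, which your final choice supplies anyway.) The paper instead applies Markov's inequality to the \emph{sum of squared increments over all levels} of a full binary subdivision of $[A,2A]$, and then recovers a pointwise bound at every dyadic checkpoint by writing $R(V)-R(A)$ as a sum of at most $\log_2 A$ increments along the binary expansion and applying Cauchy--Schwarz. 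The trade-off is exactly the one you identify in your last paragraph: Kolmogorov's inequality exploits independence of Poisson increments and in fact would let you prove the sharper exponent $\tfrac12+\ve$ on the logarithm, whereas the paper's chaining argument loses logarithmic factors but uses \emph{only} the monotonicity of $N(V)$ and the variance upper bound \eqref{variance}. That economy is deliberate: the identical argument is then run on $X_n$ to prove Theorem \ref{Rnthm}, where Rogers' formula supplies the variance bound (Lemma \ref{Rnlem1}) but the increments of $R_n$ are not independent, so no maximal inequality of Kolmogorov type is available. Your proof establishes Lemma \ref{Rest} but would need to be replaced wholesale, not merely adapted, to yield Theorem \ref{Rnthm}.
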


\begin{remark}
Note that the set 
\begin{align*}
\Bigl\{\vecx\in\Omega\: \big|\: |R(V)|\leq C_\ve(CV)^{\frac12}(\log V)^{\frac32+\ve},\:\forall V\geq10\Bigr\}
\end{align*}
is indeed $\mathbf P$-measurable, viz.\ a Borel subset of $\Omega$. Indeed, since $R(V)$ is right-continuous for every $\vecx\in\Omega$, the above set equals the countable intersection
\begin{align*}
\bigcap_{V\in\Q\cap[10,\infty)}\Bigl\{\vecx\in\Omega\:\big|\: |R(V)|\leq C_\ve(CV)^{\frac12}(\log V)^{\frac32+\ve}\Bigr\}.
\end{align*}
Here each set is of the form $\big\{\vecx\in\Omega\mid |R(V)|\leq A\big\}$ for some $V,A\geq0$, and since
\begin{align*}
\big\{\vecx\in\Omega\mid |R(V)|\leq A\big\}=\big\{\vecx\in\Omega\mid x_m\leq V\text{ and }x_{\ell+1}>V\big\}
\end{align*}
with $m=\lceil\frac12(V-A)\rceil$ and $\ell=\lfloor\frac12(V+A)\rfloor$, this is a Borel subset of $\Omega$. In a similar way one also proves that the set $\Omega_\ve$ defined below in \eqref{omegaepsilon} is a Borel subset of $\Omega$, and also that the set considered in Theorem \ref{Rnthm} is a Borel subset of $X_n$.
\end{remark}

\begin{proof}[Proof of Lemma \ref{Rest}]
For all $A\geq10$, it follows from \eqref{variance} that 
\begin{align*}
\mathbb E\bigg(R(A)^2+\sum_{0\leq k\leq\frac{1}{2}\log_2A}\sum_{j=0}^{2^{k}-1}\Big(R\big((1+2^{-k}(j+1))A\big)-R\big((1+2^{-k}j)A\big)\Big)^2\bigg)\\
=2A+\sum_{0\leq k\leq\frac{1}{2}\log_2A}2^k\cdot2\cdot2^{-k}A=2A\big(\lfloor\sfrac{1}{2}\log_2A\rfloor+2\big)\ll A\log A,
\end{align*}
where the implied constant is absolute. Hence, using Markov's inequality, we get
\begin{multline}\label{markov}
\textbf P\bigg\{R(A)^2+\sum_{0\leq k\leq\frac{1}{2}\log_2A}\sum_{j=0}^{2^{k}-1}\Big(R\big((1+2^{-k}(j+1))A\big)-R\big((1+2^{-k}j)A\big)\Big)^2\\
\geq CA\log A\bigg\}\ll C^{-1},
\end{multline}
uniformly over all $C>0$ and $A\geq10$. On the other hand we claim that for all  $C\geq1$, $A\geq10$ and $\vecx\in\Omega$ for which 
\begin{align}\label{condition}
R(A)^2+\sum_{0\leq k\leq\frac{1}{2}\log_2A}\sum_{j=0}^{2^{k}-1}\Big(R\big((1+2^{-k}(j+1))A\big)-R\big((1+2^{-k}j)A\big)\Big)^2<CA\log A
\end{align}
holds, we have 
\begin{align}\label{claim}
|R(V)|\ll(CA)^{\frac{1}{2}}\log A\qquad\text{for all $V\in[A,2A]$},
\end{align}
with an absolute implied constant. 

To prove the claim we fix any $V\in[A,2A]$. We also set $k_0:=\lfloor\frac{1}{2}\log_2A\rfloor$ and let $m$ be the largest integer satisfying $(1+2^{-k_0}m)A\leq V$; thus $0\leq m\leq 2^{k_0}$. By considering the binary representation of $m$, we may express 
\begin{align*}
R\big((1+2^{-k_0}m)A\big)-R(A)
\end{align*}
as a sum of terms of the form
\begin{align*}
R\big((1+2^{-k}(j+1))A\big)-R\big((1+2^{-k}j)A\big),
\end{align*}
where $0\leq k\leq k_0$ and where for each $k\in\{0,\ldots,k_0\}$, we either have no term, or exactly one term, for some $j=j(k,m)\in\{0,\ldots,2^k-1\}$. Hence the total number of terms does not exceed $k_0+1$ and by the Cauchy-Schwarz inequality and \eqref{condition} we have
\begin{align*}
&\big|R\big((1+2^{-k_0}m)A\big)-R(A)\big|\\
&\leq(k_0+1)^{\frac12}\bigg(\sum_{0\leq k\leq\frac{1}{2}\log_2A}\sum_{j=0}^{2^{k}-1}\Big(R\big((1+2^{-k}(j+1))A\big)-R\big((1+2^{-k}j)A\big)\Big)^2\bigg)^{\frac12}\\
&<(k_0+1)^{\frac12}\big(CA\log A\big)^{\frac12}\ll(CA)^{\frac{1}{2}}\log A.
\end{align*}
Using \eqref{condition} once more we get $R(A)<\big(CA\log A\big)^{\frac12}$ and thus, by the triangle inequality, we conclude that
\begin{align}\label{firstest}
\big|R\big((1+2^{-k_0}m)A\big)\big|\ll(CA)^{\frac{1}{2}}\log A.
\end{align}
Now, if $V=2A$ then $m=2^{k_0}$, $R(V)=R\big((1+2^{-k_0}m)A\big)$ and \eqref{firstest} is the desired estimate. Next we assume that $V<2A$. Then $m+1\leq2^{k_0}$ and by the argument proving \eqref{firstest} we also get
\begin{align}\label{secondest}
\big|R\big((1+2^{-k_0}(m+1))A\big)\big|\ll(CA)^{\frac{1}{2}}\log A.
\end{align}
Using the definition of $R(X)$ and the fact that $N(X)$ is an increasing function of $X$ we obtain 
\begin{align*}
R(X)\leq R(X')+X'-X\qquad \text{ for all } 0\leq X\leq X'.
\end{align*}
Thus, since we by our choice of $m$ have 
\begin{align*}
(1+2^{-k_0}m)A\leq V<(1+2^{-k_0}(m+1))A,
\end{align*}
we get
\begin{align}\label{thirdest}
R\big((1+2^{-k_0}m)A\big)-2^{-k_0}A\leq R(V)\leq R\big((1+2^{-k_0}(m+1))A\big)+2^{-k_0}A.
\end{align}
Recalling that $k_0>\frac{1}{2}\log_2A-1$ we obtain $2^{-k_0}A<2A^{\frac12}$. Hence \eqref{firstest}, \eqref{secondest} and \eqref{thirdest} together conclude the proof of the claim that \eqref{condition} implies \eqref{claim}.

Combining \eqref{markov} with the fact that \eqref{condition} implies \eqref{claim}, yields the following statement: There exists an absolute constant $C_0>0$ such that for all $C\geq1$ and $A\geq10$ we have 
\begin{align}\label{prob1}
\textbf{P}\Big\{\exists V\in[A,2A] : |R(V)|>C_0(CA)^{\frac12}\log A\Big\}\leq C^{-1}.
\end{align}
(Note that the constant $C$ in \eqref{prob1} is an appropriate multiple of the constant $C$ in \eqref{markov}-\eqref{claim}.) Now, given $K\geq1$ and $\ve>0$, we apply, for all $j\in\Z_{\geq1}$, \eqref{prob1} with $A=5\cdot2^j$ and $C=Kj^{1+\ve}$. We conclude that there exists a constant $C_0'>0$, which only depends on $\ve$, such that
\begin{align*}
\textbf{P}\Big\{\exists V\in[5\cdot2^j,10\cdot2^j] : |R(V)|>C_0'(KV)^{\frac12}(\log V)^{\frac32+\frac12 \ve}\Big\}\leq K^{-1}j^{-1-\ve}
\end{align*}
for all $j\in\Z_{\geq1}$. Hence, using the subadditivity of $\textbf P$, we obtain 
\begin{align*}
\textbf{P}\Big\{\exists V\geq10: |R(V)|>C_0'(KV)^{\frac12}(\log V)^{\frac32+\frac12 \ve}\Big\}\leq K^{-1}C',
\end{align*}
where $C':=\sum_{j=1}^{\infty}j^{-1-\ve}>1$ only depends on $\ve$. Finally, the lemma follows from setting $C=K{C'}^{-1}$ and $C_{\ve/2}=C_0'{C'}^{\frac12}$.
\end{proof}

For $\ve>0$ we define
\begin{align}\label{omegaepsilon}
\Omega_{\ve}:=\Big\{\vecx\in\Omega\,\,\big|\,\,|R(V)|\ll_{\vecx,\ve}V^{\frac12}(\log V)^{\frac32+\ve} \quad \forall V\geq10\Big\}.
\end{align}
Note that it follows from Lemma \ref{Rest} that $\textbf P(\Omega_{\ve})=1$ for every $\ve>0$. For notational convenience we will only work with $\Omega_{1/2}$ in the following; however any other set $\Omega_\ve$ would do just as well. The following lemma shows that the integral $H(c)$ in \eqref{Hcdef} converges almost surely.

\begin{lem}\label{lordagslemma}
For every $\vecx\in\Omega_{1/2}$ the integral $H(c)$ converges for all $c\in(\frac14,\frac12)$, and furthermore the integral $\int_A^\infty V^{-2c}\,dR(V)$ converges for all $A>0$ and $c>\frac14$.
\end{lem}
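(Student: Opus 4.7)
The plan is to reduce the problem to a standard Riemann integral by means of Stieltjes integration by parts, exploiting the $\Omega_{1/2}$-bound $|R(V)|\ll V^{1/2}(\log V)^2$ (valid for $V\geq 10$).

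First I would handle the tail $\int_A^\infty V^{-2c}\,dR(V)$. Since $R$ has bounded variation on every compact subinterval of $(0,\infty)$ (it is piecewise linear with finitely many jumps) and $V\mapsto V^{-2c}$ is continuous on $(0,\infty)$, integration by parts gives, for $10\leq A<B<\infty$,
\begin{align*}
\int_A^B V^{-2c}\,dR(V)=B^{-2c}R(B)-A^{-2c}R(A)+2c\int_A^B V^{-2c-1}R(V)\,dV.
\end{align*}
Applying the $\Omega_{1/2}$-bound, the boundary term satisfies $|B^{-2c}R(B)|\ll B^{1/2-2c}(\log B)^2\to 0$ as $B\to\infty$ whenever $c>\frac14$. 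Moreover the Riemann integrand on the right is bounded by $|V^{-2c-1}R(V)|\ll V^{-2c-1/2}(\log V)^2$, which is integrable at infinity precisely because $2c+\frac12>1$ when $c>\frac14$. Hence $\lim_{B\to\infty}\int_A^B V^{-2c}\,dR(V)$ exists for every $A\geq 10$ and every $c>\frac14$. For $0<A<10$ one simply writes $\int_A^\infty=\int_A^{10}+\int_{10}^\infty$, noting that the first piece is a finite Stieltjes integral of a continuous function against a function of bounded variation, so this extends convergence to all $A>0$.

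Next I would handle the piece near $0$, which is only needed for the convergence of the full integral $H(c)$ and requires $c<\frac12$. Using that almost every $\vecx\in\Omega$ has $0<T_1<T_2<\cdots$ and that $\mathcal P$ has only finitely many points in any bounded interval, for any fixed $A>0$ the decomposition $dR(V)=dN(V)-dV$ gives
\begin{align*}
\int_0^A V^{-2c}\,dR(V)=2\sum_{T_j\leq A}T_j^{-2c}-\int_0^A V^{-2c}\,dV,
\end{align*}
where the sum is finite (since $T_j\to\infty$) and the Riemann integral equals $\frac{A^{1-2c}}{1-2c}<\infty$ for $c<\frac12$. Combining this with the previous paragraph yields the convergence of $H(c)=\int_0^\infty V^{-2c}\,dR(V)$ for every $c\in(\frac14,\frac12)$ and every $\vecx\in\Omega_{1/2}$.

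I do not anticipate any essential obstacles; the only point requiring mild care is the standard justification of the Stieltjes integration-by-parts identity in the presence of the jumps of $N(V)$, which is legitimate because the integrand $V^{-2c}$ is continuous and $R$ has bounded variation on $[A,B]$. The decisive input is the quantitative growth bound from the definition of $\Omega_{1/2}$, which makes both the boundary term vanish in the limit and the tail integrand absolutely integrable as soon as $c>\frac14$.
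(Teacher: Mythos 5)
Your proposal is correct and follows essentially the same route as the paper: Stieltjes integration by parts on $[A,B]$, the $\Omega_{1/2}$-bound $|R(V)|\ll V^{1/2}(\log V)^2$ to kill the boundary term and make the tail integrand absolutely integrable for $c>\frac14$, and the observation that near $0$ one has $R(V)=-V$ (equivalently, finitely many points of $\mathcal P$ below $A$), so the contribution there reduces to $\int_0^A V^{-2c}\,dV$, convergent for $c<\frac12$.
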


\begin{proof}
Let $\vecx\in\Omega_{1/2}$ be fixed. Now for any $0<A<B$ and $c>\frac14$ we have
\begin{align}\label{ABINTEGRAL2}
\int_A^B V^{-2c}\,dR(V)=\Bigl[V^{-2c}R(V)\Bigr]_{V=A}^{V=B}+2c\int_A^B V^{-2c-1}R(V)\,dV,
\end{align}
and since $V^{-2c}|R(V)|\ll_\vecx V^{\frac12-2c}(\log V)^2$ as $V\to\infty$, with $\frac12-2c<0$,
it follows that both terms in the right hand side of \eqref{ABINTEGRAL2} are convergent as $B\to\infty$.
This proves the second statement of the lemma. Finally, since $R(V)=-V$ for all $0\leq V\ll_\vecx 1$ it follows that if $c<\frac12$ then the two terms in the right hand side of \eqref{ABINTEGRAL2} are also convergent as $A\to0$, so that $H(c)$ converges for all $c\in(\frac14,\frac12)$.
\end{proof}

\begin{lem}\label{welldef1}
$H(c)$ is a well-defined random variable on $\Omega_{1/2}$ for all $c\in(\frac{1}{4},\frac{1}{2})$.
\end{lem}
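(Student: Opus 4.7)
The plan is to prove measurability of $H(c)\colon\Omega_{1/2}\to\mathbb R$, since Lemma \ref{lordagslemma} has already established that $H(c)(\vecx)$ is well-defined pointwise on $\Omega_{1/2}$. The strategy is to exhibit $H(c)$ as a pointwise limit of explicit Borel-measurable functions of $\vecx$.

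First I would handle finite truncations. For any $0<A<B<\infty$ and any $\vecx=\{x_j\}_{j=1}^{\infty}\in\Omega$, the function $V\mapsto R(V)(\vecx)$ has bounded variation on $[A,B]$: it is smooth except for jumps of size $2$ at those $x_j\in(A,B]$, of which there are only finitely many. The Riemann--Stieltjes integral therefore evaluates to the elementary expression
\[
\int_A^B V^{-2c}\,dR(V)=2\sum_{j\,:\,x_j\in(A,B]}x_j^{-2c}-\frac{B^{1-2c}-A^{1-2c}}{1-2c}.
\]
Each coordinate map $\vecx\mapsto x_j$ is continuous on $\Omega$, so each summand $\vecx\mapsto 2x_j^{-2c}\mathbf 1_{(A,B]}(x_j)$ is Borel measurable. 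Writing the sum as the limit of the partial sums $\sum_{j=1}^{K}$ as $K\to\infty$ (a limit which is in fact eventually constant for every $\vecx\in\Omega_{1/2}$, since $x_j\to\infty$ there) exhibits $\vecx\mapsto\int_A^B V^{-2c}\,dR(V)$ as a Borel-measurable function on $\Omega_{1/2}$.

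Next I would pass to the improper integral. By Lemma \ref{lordagslemma}, for every $\vecx\in\Omega_{1/2}$ and every $c\in(\tfrac14,\tfrac12)$ the limit
\[
H(c)(\vecx)=\lim_{n\to\infty}\int_{1/n}^{n}V^{-2c}\,dR(V)(\vecx)
\]
exists, so $H(c)$ is a pointwise limit of Borel-measurable functions on $\Omega_{1/2}$, and hence Borel measurable. As an alternative route with the same flavor, one can use the integration by parts identity from the proof of Lemma \ref{lordagslemma} together with the bounds $R(V)=-V$ for small $V$ and $|R(V)|\ll_\vecx V^{1/2}(\log V)^2$ for large $V$ to rewrite $H(c)(\vecx)=2c\int_0^\infty V^{-2c-1}R(V)\,dV$ as an absolutely convergent Lebesgue integral, whose measurability in $\vecx$ then follows from Fubini applied to the jointly measurable integrand $(V,\vecx)\mapsto V^{-2c-1}R(V)(\vecx)$.

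The only slightly delicate point, and hence the thing to get right, is the passage to the limit: one must check that the two-parameter improper limit $A\to 0^+$, $B\to\infty$ that defines $H(c)$ can be replaced by the one-parameter sequential limit above. This is immediate from the absolute convergence established in the proof of Lemma \ref{lordagslemma}. Beyond this, the argument is routine bookkeeping with the product topology on $\Omega$.
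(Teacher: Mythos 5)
Your proof is correct and follows essentially the same route as the paper: the paper also writes the truncated integral explicitly as $2\sum_{T_j\le A}T_j^{-2c}-\frac{A^{1-2c}}{1-2c}$, verifies its measurability (via a decomposition of $\Omega$ into sets on which this expression is continuous, rather than via partial sums, but this is cosmetic), and then realizes $H(c)$ as a pointwise limit of these truncations on $\Omega_{1/2}$. Your two-sided truncation $\int_{1/n}^{n}$ and the integration-by-parts/Fubini alternative are fine but unnecessary refinements, since for $c<\frac12$ the lower endpoint causes no difficulty.
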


\begin{proof}
Fix $c\in(\frac{1}{4},\frac{1}{2})$. By Lemma \ref{lordagslemma}, $H(c)$ is convergent for each $\vecx\in\Omega_{1/2}$, and it remains to show that  $\vecx\mapsto H(c)$ is measurable. Let us for $A>0$ consider the function $f_A:\Omega\to\R\cup\{\infty\}$ defined by 
\begin{align}\label{FA}
f_A(T_1,T_2,\ldots)=\int_0^AV^{-2c}\,dR(V)=\int_0^AV^{-2c}\,dN(V)-\int_0^AV^{-2c}\,dV\\
=2\sum_{T_j\leq A}T_j^{-2c}-\frac{A^{1-2c}}{1-2c}\,.\nonumber
\end{align}
We express $\Omega$ as a disjoint union of Borel sets as follows: $\Omega=\big(\cup_{j=0}^{\infty}\Omega^{(j)}\big)\cup\Omega^{(\infty)}$, where 
\begin{align}\label{mangd1}
\Omega^{(\infty)}=\big\{\vecx\in\Omega\mid x_{\ell}\leq A,\,\forall\ell\big\},\qquad\Omega^{(0)}=\big\{\vecx\in\Omega\mid A<x_1\big\},
\end{align} 
and
\begin{align}\label{mangd2}
\Omega^{(j)}=\big\{\vecx\in\Omega\mid x_j\leq A<x_{j+1}\big\}\,\text{ for $j\geq1$.}
\end{align} 
It follows from the last expression in \eqref{FA} that the restriction of $f_A$ to each set $\Omega^{(j)}$ is continuous (we set $f_A:=\infty$ for all $\vecx\in\Omega^{(\infty)}$). Hence each $f_A$ is  measurable, and hence also the restrictions of these functions to $\Omega_{1/2}$ are measurable (of course we also have $\Omega^{(\infty)}\cap\Omega_{1/2}=\emptyset$, so that $f_A$ is real-valued on $\Omega_{1/2}$). Thus also $H(c)$ is measurable on $\Omega_{1/2}$, since it is the pointwise limit of the sequence $f_1,f_2,f_3,\ldots$ of measurable functions.   
\end{proof}

\begin{remark}\label{welldef2}
We want to consider $H(c)$ also as a random variable on $\Omega$. To make this rigorous we should redefine $H(c)$ (as for example zero) on $\Omega\setminus\Omega_{1/2}$ in order to make $H(c)$ measurable on $\Omega$ (cf.\ \cite[p.\ 29]{rudin}). However, since we in the present paper are only interested in questions of distribution and $\Omega_{1/2}$ has full measure in $\Omega$, we will simply let $H(c)$ remain undefined at points where the integral is divergent. 
\end{remark}

We next note that Lemma \ref{Rest} also implies that the tail of $H(c)$ can be made uniformly small in closed intervals $[c_1,c_2]\subset(\frac14,\frac12]$.

\begin{lem}\label{intest}
Let $\frac{1}{4}<c_1<c_2\leq\frac{1}{2}$. Then for all $\ve'>0$ there exists a constant $A_0>0$ such that for all $A\geq A_0$ we have
\begin{align*}
\textbf{P}\,\bigg\{\sup_{c\in[c_1,c_2]}\bigg|\int_A^{\infty}V^{-2c}\,dR(V)\bigg|\leq\ve'\bigg\}\geq1-\ve'.
\end{align*}
\end{lem}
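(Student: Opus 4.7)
My plan is to reduce the uniform tail bound to Lemma \ref{Rest} by integration by parts, exploiting that the decay in $V$ is controlled by the \emph{lower} endpoint $c_1 > \frac14$, while the upper endpoint $c_2\leq\frac12$ is essentially irrelevant.

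First, fix an arbitrary $\vecx \in \Omega_{1/2}$. For $10 \leq A < B$ and $c \in [c_1, c_2]$, Riemann--Stieltjes integration by parts (valid since $R$ is right-continuous and of locally bounded variation) gives
\begin{align*}
\int_A^B V^{-2c}\, dR(V) = V^{-2c} R(V)\Big|_A^B + 2c \int_A^B V^{-2c-1} R(V)\, dV.
\end{align*}
Since $|R(V)| \ll_\vecx V^{1/2}(\log V)^2$ and $2c \geq 2c_1 > \frac12$, both $B^{-2c}R(B)\to 0$ and the integral converges absolutely as $B\to\infty$, yielding
\begin{align*}
\int_A^{\infty} V^{-2c}\, dR(V) = -A^{-2c} R(A) + 2c \int_A^{\infty} V^{-2c-1} R(V)\, dV.
\end{align*}
By dominated convergence (the integrand being majorised by $V^{-2c_1-1/2}(\log V)^2$, which is integrable on $[A,\infty)$), the right-hand side is a continuous function of $c$ on $[c_1,c_2]$, so the supremum over $[c_1,c_2]$ equals the supremum over a countable dense subset and is in particular $\mathbf P$-measurable.

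The main step is to apply Lemma \ref{Rest} with $\ve=\frac12$ and a constant $C\geq 1$ to be chosen. On the event of probability at least $1-C^{-1}$ where $|R(V)| \leq C_{1/2} (CV)^{1/2}(\log V)^2$ holds for all $V\geq 10$, we estimate, for $A\geq 10$ and $c\in[c_1,c_2]$,
\begin{align*}
\bigl|A^{-2c} R(A)\bigr| \leq C_{1/2}\, C^{1/2}\, A^{\frac12-2c_1}(\log A)^2,
\end{align*}
and, using $-\frac12-2c < -1$ together with the standard tail estimate $\int_A^\infty V^{-\alpha}(\log V)^2\, dV \ll_\alpha A^{1-\alpha}(\log A)^2$ for $\alpha>1$,
\begin{align*}
\biggl|2c\int_A^{\infty} V^{-2c-1} R(V)\, dV\biggr| \leq 2c_2\, C_{1/2}\, C^{1/2} \int_A^{\infty} V^{-\frac12-2c_1}(\log V)^2\, dV \ll_{c_1} C^{1/2} A^{\frac12-2c_1}(\log A)^2.
\end{align*}
Combining the two, there is a constant $K = K(c_1,c_2)$ such that, on this good event,
\begin{align*}
\sup_{c\in[c_1,c_2]}\biggl|\int_A^{\infty} V^{-2c}\, dR(V)\biggr| \leq K\, C^{1/2}\, A^{\frac12-2c_1}(\log A)^2.
\end{align*}

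To finish, given $\ve' > 0$, I first choose $C = \lceil 1/\ve' \rceil$ so that $C^{-1} \leq \ve'$; then, since $\frac12-2c_1 < 0$, I pick $A_0 \geq 10$ large enough that $K C^{1/2} A^{\frac12-2c_1}(\log A)^2 \leq \ve'$ for all $A\geq A_0$. The conclusion follows. The only mild obstacle is the measurability of the supremum, handled above by the continuity argument; everything else is a direct consequence of Lemma \ref{Rest} combined with the integration by parts identity.
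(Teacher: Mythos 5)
Your proposal is correct and follows essentially the same route as the paper: apply Lemma \ref{Rest} to get a high-probability event on which $|R(V)|$ obeys a uniform $V^{\frac12+o(1)}$ bound, then integrate by parts and observe that the resulting tail is $O\bigl(A^{\frac12-2c_1+o(1)}\bigr)$ uniformly in $c\in[c_1,c_2]$. The only cosmetic difference is that you keep the logarithmic factors explicit, whereas the paper absorbs them into a factor $V^{\delta}$ with $\delta<2c_1-\frac12$; both choices work equally well.
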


\begin{proof}
Let $\ve'>0$ and $\delta\in(0,2c_1-\frac12)$ be given. It follows from Lemma \ref{Rest} that there exists a set $\Omega'\subset\Omega_{1/2}$ with $\textbf P (\Omega')\geq 1-\ve'$ such that for all $\vecx\in\Omega'$ and all $V\geq 10$ we have $|R(V)|\ll_{\ve',\delta}V^{\frac{1}{2}+\delta}$, where the implied constant is independent of $\vecx$. Now, for any $\vecx\in\Omega'$ and all $A\geq10$, we have
\begin{multline}\label{intext}
\bigg|\int_A^{\infty}V^{-2c}\,dR(V)\bigg|=\bigg|\Big[V^{-2c}R(V)\Big]_{V=A}^{V=\infty}+2c\int_{A}^{\infty}V^{-2c-1}R(V)\,dV\bigg|\\
\ll_{\ve',\delta,c_1} A^{-2c+\frac12+\delta}\leq A^{-2c_1+\frac12+\delta},
\end{multline}
uniformly over all $c\in[c_1,c_2]$. Since we can make the right hand side in \eqref{intext} as small as we like, by choosing $A$ large enough, the lemma follows.
\end{proof}

\begin{lem}\label{welldef3}
Let $\frac{1}{4}<c_1<c_2<\frac{1}{2}$. Then, for all $\vecx\in\Omega_{1/2}$ the function $c\mapsto H(c)$ is continuous in $[c_1,\frac12)$. In particular $\mathcal H:\Omega_{1/2}\to C\big([c_1,c_2]\big)$ given by $\vecx\mapsto \big(c\mapsto H(c)\big)$ is a well-defined random function.
\end{lem}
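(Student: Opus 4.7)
The plan is to first pass from the Riemann--Stieltjes expression for $H(c)$ to the absolutely convergent Lebesgue integral
$$H(c)=2c\int_0^\infty V^{-2c-1}R(V)\,dV, \qquad c\in(\tfrac14,\tfrac12),$$
and then prove continuity on $[c_1,\frac12)$ by dominated convergence on compact subintervals. Measurability of $\mathcal H$ will follow automatically.

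The first step reuses the integration-by-parts identity \eqref{ABINTEGRAL2} from the proof of Lemma \ref{lordagslemma}: for $\vecx\in\Omega_{1/2}$ and $c\in(\frac14,\frac12)$, the boundary term $V^{-2c}R(V)$ tends to $0$ both as $V\to0^+$ (since $R(V)=-V$ on $[0,T_1)$ gives $V^{-2c}R(V)=-V^{1-2c}$, which vanishes because $c<\frac12$) and as $V\to\infty$ (since $|R(V)|\ll_{\vecx} V^{1/2}(\log V)^2$ and $c>\frac14$), yielding the claimed formula. Now I would fix any $c^*\in(c_1,\frac12)$ and construct a $c$-independent, $L^1((0,\infty))$ dominating function for $|2cV^{-2c-1}R(V)|$ on $c\in[c_1,c^*]$ by splitting into three ranges. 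On $(0,T_1)$, where $R(V)\equiv -V$, the integrand equals $2cV^{-2c}\leq V^{-2c_1}+V^{-2c^*}$, integrable at the origin since $2c^*<1$. On $[T_1,10]$ both $V^{-2c-1}$ and $|R(V)|$ are bounded in terms of $\vecx$, independently of $c\in[c_1,c^*]$. On $[10,\infty)$ the bound defining $\Omega_{1/2}$ gives $|2cV^{-2c-1}R(V)|\ll_{\vecx} V^{-2c_1-\frac12}(\log V)^2$, integrable since $c_1>\frac14$. Dominated convergence then shows $c\mapsto H(c)$ is continuous on $[c_1,c^*]$, and since $c^*\in(c_1,\frac12)$ was arbitrary, continuity holds on all of $[c_1,\frac12)$.

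For the second assertion, the inclusion $[c_1,c_2]\subset[c_1,\frac12)$ ensures that $\mathcal H(\vecx)\in C([c_1,c_2])$ for every $\vecx\in\Omega_{1/2}$. Since $C([c_1,c_2])$ under the sup-norm is a separable Banach space whose Borel $\sigma$-algebra is generated by the point-evaluation functionals $f\mapsto f(c)$, measurability of $\mathcal H$ reduces to measurability of each $\vecx\mapsto H(c)$, already established in Lemma \ref{welldef1}. The main subtlety lies in the small-$V$ range: the bound $|R(V)|\ll_{\vecx} V^{1/2}(\log V)^2$ defining $\Omega_{1/2}$ is useless near $V=0$, but the exact identity $R(V)=-V$ on $[0,T_1)$, combined with $c<\frac12$, supplies precisely what is needed for integrability near the origin.
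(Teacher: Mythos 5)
Your proof is correct, but it takes a different route to continuity than the paper does. The paper keeps the Riemann--Stieltjes form, observes from \eqref{FA} that each truncation $c\mapsto\int_0^A V^{-2c}\,dR(V)$ is manifestly continuous (a finite sum $2\sum_{T_j\le A}T_j^{-2c}$ minus the explicit term $\frac{A^{1-2c}}{1-2c}$), and then shows the tail $\int_A^\infty V^{-2c}\,dR(V)$ tends to $0$ uniformly in $c\in[c_1,\frac12)$ by mimicking Lemma \ref{intest}; continuity follows as a uniform limit of continuous functions. You instead integrate by parts over all of $(0,\infty)$ via \eqref{ABINTEGRAL2}, obtain the absolutely convergent representation $H(c)=2c\int_0^\infty V^{-2c-1}R(V)\,dV$, and apply dominated convergence on $[c_1,c^*]$. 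The underlying estimates are identical in both arguments ($R(V)=-V$ on $[0,T_1)$ to control the origin, the $\Omega_{1/2}$ bound to control infinity), and your dominating function is constructed correctly on each of the three ranges; the only cosmetic wrinkle is that your ranges $(0,T_1)$ and $[10,\infty)$ may overlap when $T_1>10$, which is harmless since each piece of $(0,\infty)$ is covered by at least one valid bound. What the paper's route buys is economy---it reuses Lemma \ref{intest}, whose uniform tail bound is needed anyway for the proof of Theorem \ref{curvethm}---while your route buys a clean closed-form Lebesgue integral for $H(c)$ that makes the continuity statement essentially immediate. Your measurability argument (point evaluations generate the Borel $\sigma$-algebra of $C([c_1,c_2])$, reducing to Lemma \ref{welldef1}) is the same fact the paper invokes via the citation of Billingsley.
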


\begin{proof}
Fix $\vecx\in\Omega_{1/2}$. For each $A>0$, the formula \eqref{FA} shows that $c\mapsto\int_0^AV^{-2c}\,dR(V)$ is a continuous function on $[c_1,\frac12)$. Furthermore, by mimicking the proof of Lemma \ref{intest} we see that the function $c\mapsto H(c)$ is the uniform limit of $c\mapsto\int_0^AV^{-2c}\,dR(V)$ as $A\to\infty$. Hence $c\mapsto H(c)$ is indeed continuous in $[c_1,\frac12)$. The second statement now follows from Lemma \ref{welldef1} (cf., e.g.,\ \cite[p.\ 84]{billconv}).
\end{proof}

\begin{remark}
We will also consider $\mathcal H$ as a random function on $\Omega$ (cf.\ Remark \ref{welldef2}).
\end{remark}

\subsection{The random variable $Z_0$}

We now show that the random variable $Z_0$, introduced in \eqref{Z0intro}, is well-defined.

\begin{lem}\label{Z0lemma}
For every $\vecx\in\Omega_{1/2}$ the limit 
\begin{align*}
Z_0:=\lim_{c\to\frac{1}{2}-}\bigg(\int_0^{\infty}V^{-2c}\,dR(V)+\frac{1}{1-2c}\bigg)
\end{align*}
exists. In particular, this limit exists $\textbf P$ almost surely.
\end{lem}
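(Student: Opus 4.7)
Fix $\vecx\in\Omega_{1/2}$ and some auxiliary cutoff $A\geq 10$. The plan is to rewrite the expression inside the limit as a sum of three pieces, each of which visibly has a limit as $c\to\frac12-$, with the divergent term $\frac{1}{1-2c}$ being cancelled by a matching divergent term produced on the interval $[0,A]$.

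First I would split
\begin{align*}
\int_0^{\infty}V^{-2c}\,dR(V)+\frac{1}{1-2c}=\int_0^A V^{-2c}\,dR(V)+\frac{1}{1-2c}+\int_A^{\infty}V^{-2c}\,dR(V),
\end{align*}
where the right-hand integral converges by Lemma \ref{lordagslemma}. On $[0,A]$ one has $dR(V)=dN(V)-dV$, so
\begin{align*}
\int_0^A V^{-2c}\,dR(V)+\frac{1}{1-2c}=2\sum_{T_j\leq A}T_j^{-2c}+\frac{1-A^{1-2c}}{1-2c}.
\end{align*}
The sum is finite, so as $c\to\frac12-$ the first term tends to $2\sum_{T_j\leq A}T_j^{-1}$. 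The remaining ratio is of indeterminate form $0/0$ at $c=\frac12$, and a Taylor expansion (or l'H\^{o}pital) gives $\frac{1-A^{1-2c}}{1-2c}\to\log A$.

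Next, for the tail I would integrate by parts, using that $V^{-2c}R(V)\to 0$ as $V\to\infty$ for every $c>\frac14$ (a consequence of $\vecx\in\Omega_{1/2}$):
\begin{align*}
\int_A^{\infty}V^{-2c}\,dR(V)=-A^{-2c}R(A)+2c\int_A^{\infty}V^{-2c-1}R(V)\,dV.
\end{align*}
The boundary term obviously tends to $-A^{-1}R(A)$. For the remaining integral, the bound $|R(V)|\ll_{\vecx}V^{1/2}(\log V)^2$ for $V\geq 10$ gives
\begin{align*}
\bigl|V^{-2c-1}R(V)\bigr|\ll_{\vecx}V^{-2c-1/2}(\log V)^2\leq V^{-5/4}(\log V)^2,\qquad c\in\bigl[\sfrac{3}{8},\sfrac{1}{2}\bigr],\ V\geq A,
\end{align*}
which is an integrable dominant independent of $c$. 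By dominated convergence the integral tends to $\int_A^{\infty}V^{-2}R(V)\,dV$ as $c\to\frac12-$. Adding the three limits produces an explicit expression for $Z_0$:
\begin{align*}
Z_0=2\sum_{T_j\leq A}T_j^{-1}+\log A-A^{-1}R(A)+\int_A^{\infty}V^{-2}R(V)\,dV.
\end{align*}
The almost sure statement then follows from $\mathbf{P}(\Omega_{1/2})=1$, which was already noted after \eqref{omegaepsilon}. The one step that requires a moment of care is the application of dominated convergence to the tail; this is where the particular strength of the bound defining $\Omega_{1/2}$ enters, since a pointwise bound of order $V^{1/2}$ would already suffice to control each individual integral but one needs an estimate that is uniform in $c$ up to $c=\frac12$ to pass to the limit, and the polylogarithmic factor is harmless here.
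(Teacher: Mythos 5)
Your decomposition is exactly the one the paper uses (split at $A$, evaluate $dN-dV$ explicitly on $[0,A]$, integrate the tail by parts, and pass to the limit termwise), and your dominated-convergence justification for the tail integral is a valid — indeed slightly more careful — version of the paper's "absolutely convergent, so we may let $c\to\frac12$" step. The existence of the limit is therefore established.

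However, there is a sign error in the middle piece: writing $t=1-2c\to0^+$ one has
\begin{align*}
\frac{1-A^{1-2c}}{1-2c}=\frac{1-A^{t}}{t}\longrightarrow-\Bigl(\frac{d}{dt}A^{t}\Bigr)_{\!|t=0}=-\log A,
\end{align*}
not $+\log A$. Consequently your closing formula should read
\begin{align*}
Z_0=2\sum_{T_j\leq A}T_j^{-1}-\log A-A^{-1}R(A)+\int_A^{\infty}V^{-2}R(V)\,dV,
\end{align*}
which matches the paper's expression $Z_0=\int_0^AV^{-1}\,dN(V)+\int_A^\infty V^{-1}\,dR(V)-\log A$ (and, for $A=1$, Remark \ref{Z0formula}). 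The slip does not affect the existence claim of the lemma, but the explicit formula is used later, so the sign matters. A quick consistency check: the right-hand side must be independent of $A$, and differentiating your version in $A$ does not give zero, whereas the corrected one does.
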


\begin{proof}
For any $\vecx\in\Omega_{1/2}$, $A>0$ and $c\in(\frac14,\frac12)$ we have 
\begin{align}\label{goodrep}
&\int_0^{\infty}V^{-2c}\,dR(V)+\frac{1}{1-2c}\\
&=\int_0^AV^{-2c}\,dN(V)-\int_0^AV^{-2c}\,dV+\int_A^{\infty}V^{-2c}\,dR(V)+\frac{1}{1-2c}\nonumber\\
&=\int_0^AV^{-2c}\,dN(V)+\bigg(-A^{-2c}R(A)+2c\int_A^{\infty}V^{-2c-1}R(V)\,dV\bigg)+\frac{1-A^{1-2c}}{1-2c}.\nonumber
\end{align}
We recall that $\int_0^AV^{-2c}\,dN(V)=2\sum_{T_j\leq A}T_j^{-2c}$ is a finite sum and note that the integral $\int_A^{\infty}V^{-2c-1}R(V)\,dV$ is absolutely convergent. Hence, for any $\vecx$ and $A$ as above, we can let $c\to\frac12$ in the last line of \eqref{goodrep} to obtain 
\begin{align}\label{formulaZ0}
&\lim_{c\to\frac{1}{2}-}\bigg(\int_0^{\infty}V^{-2c}\,dR(V)+\frac{1}{1-2c}\bigg)\\
&=\int_0^AV^{-1}\,dN(V)+\bigg(-A^{-1}R(A)+\int_A^{\infty}V^{-2}R(V)\,dV\bigg)-\Big(\frac d{dt}A^t\Big)_{|t=0}\nonumber\\
&=\int_0^AV^{-1}\,dN(V)+\int_A^{\infty}V^{-1}\,dR(V)-\log A.\nonumber
\end{align}
Since $\textbf P(\Omega_{1/2})=1$ the proof is complete.
\end{proof}

\begin{remark}\label{Z0remark}
Since the restriction of $Z_0$ to $\Omega_{1/2}$ is (by definition) a pointwise limit of measurable functions, we find that $Z_0$ is a random variable on $\Omega_{1/2}$. In fact we will consider $Z_0$ also as a random variable on $\Omega$ (cf.\ Remark \ref{welldef2}). 
\end{remark}

\begin{remark}\label{Z0formula}
We note that the last line of \eqref{formulaZ0} gives a formula for $Z_0$ for any $A>0$. In particular we have
\begin{align*}
Z_0=\int_0^1V^{-1}\,dN(V)+\int_1^{\infty}V^{-1}\,dR(V).
\end{align*}
\end{remark}

\subsection{$H(c)$ and $Z_0$ have stable distributions}

Even though the random variable $H(c)$ has a rather complicated definition, its distribution can be understood in very explicit terms. More precisely it follows from \cite[Thm.\ 1.4.5]{ST} (slightly
modified to allow for the Poisson process to have intensity $\frac12$)
that $H(c)$ has the strictly $\frac{1}{2c}$-stable distribution 
\begin{align}\label{stability}
S_{\frac{1}{2c}}\bigg(2\bigg(\frac{\Gamma(2-\frac{1}{2c})\cos(\frac{\pi}{4c})}{2(1-\frac{1}{2c})}\bigg)^{2c},1,0\bigg).
\end{align}
(Here we use the same parameterization of stable distributions as \cite{ST}.)

\begin{remark}
Recalling from the introduction the relation between $H(c)$ and the random variable $\int_{0}^{\infty}V^{-2c}\,dN(V)$, defined for $c>\frac12$, it is interesting to note that also $\int_{0}^{\infty}V^{-2c}\,dN(V)$ has a strictly $\frac{1}{2c}$-stable distribution given by the expression  \eqref{stability} (cf.\ \cite[Sec.\ 2.5]{epstein1}). 
\end{remark}

\begin{remark}\label{normalremark}
It follows from \eqref{stability} and \cite[Property 1.2.3]{ST} that, for any $c\in(\frac14,\frac12)$, the random variable $\big(2c-\frac12\big)^{\frac12}H(c)$ has the strictly $\frac{1}{2c}$-stable distribution
\begin{align*}
S_{\frac{1}{2c}}\bigg(2\big(2c-\sfrac12\big)^{\frac12}\bigg(\frac{\Gamma(2-\frac{1}{2c})\cos(\frac{\pi}{4c})}{2(1-\frac{1}{2c})}\bigg)^{2c},1,0\bigg). 
\end{align*}
Hence, since
\begin{align*} 
\lim_{c\to\frac14+}\bigg(\frac{1}{2c},2\big(2c-\sfrac12\big)^{\frac12}\bigg(\frac{\Gamma(2-\frac{1}{2c})\cos(\frac{\pi}{4c})}{2(1-\frac{1}{2c})}\bigg)^{2c},1,0\bigg)=\big(2,\sfrac{1}{\sqrt 2},1,0\big) 
\end{align*}
and $S_2\big(\frac{1}{\sqrt2},1,0\big)=S_2\big(\frac{1}{\sqrt2},0,0\big)=N(0,1)$, we conclude, using \cite[Def.\ 1.1.6]{ST} and \cite[Thm.\ 26.3]{billing}, that $\big(2c-\frac12\big)^{\frac12}H(c)$ converges in distribution to $N(0,1)$ as $c\to\frac14+$. Note in particular that this proves Theorem \ref{independentgaussians} in the case $m=1$.
\end{remark}

\begin{figure}[t]
\begin{center}$
\begin{array}{cc}
\includegraphics[width=0.5\textwidth]{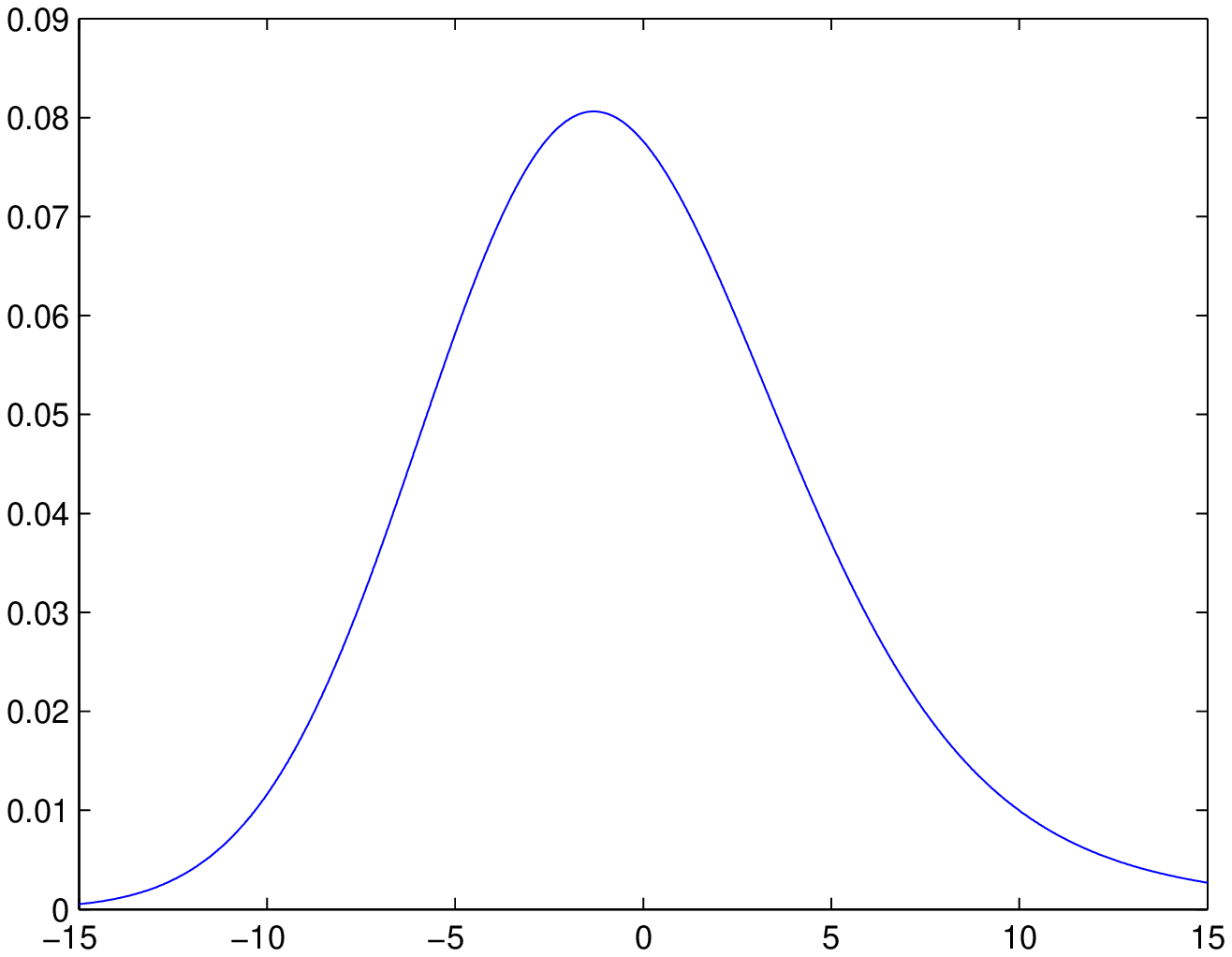}&
\includegraphics[width=0.5\textwidth]{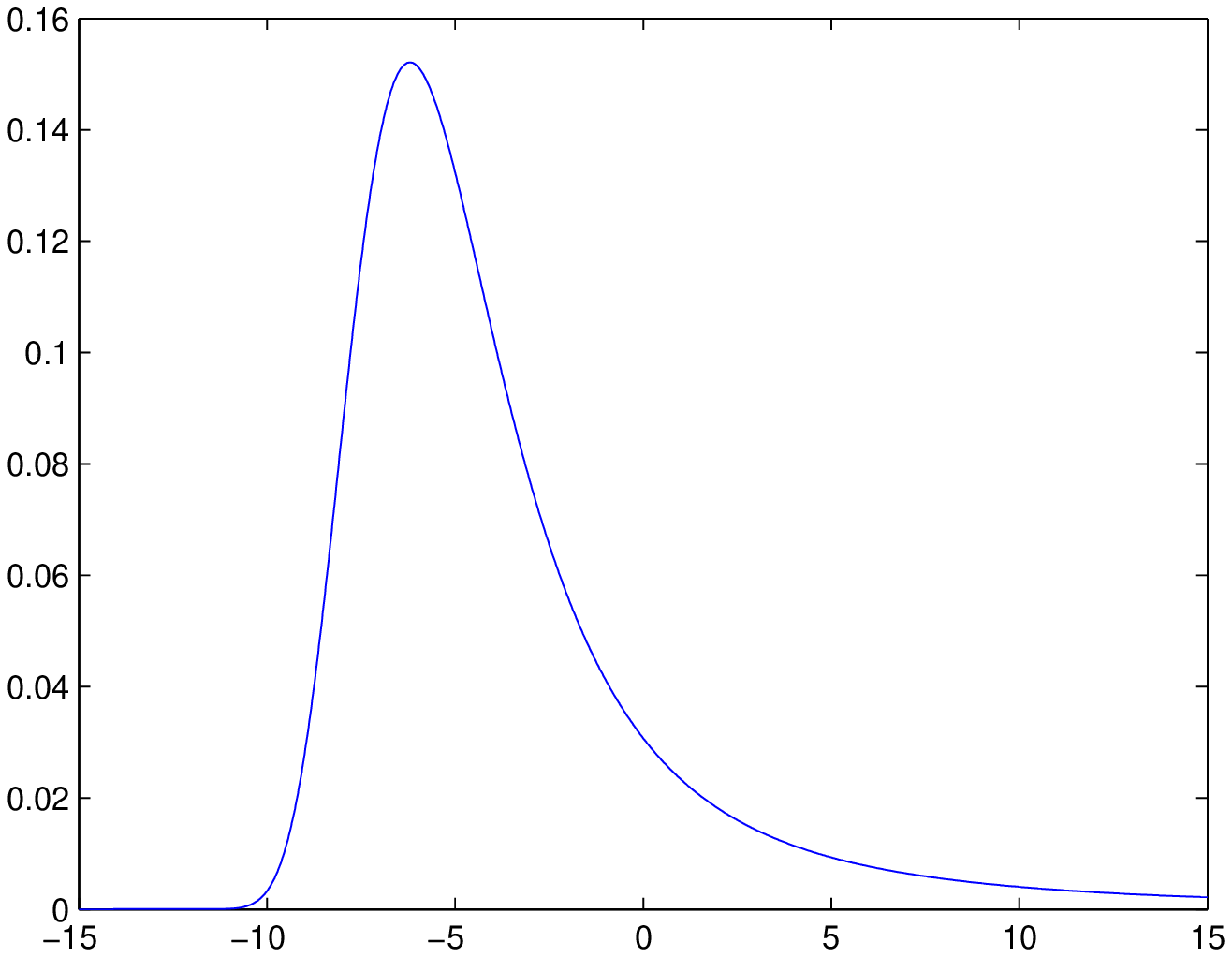}
\end{array}$
\caption{The probability density functions of $H\big(\frac{5}{18}\big)$ (left) and $H\big(\frac{5}{12}\big)$ (right). The figures were generated by the program STABLE, which is available from J. P. Nolan's website http://academic2.american.edu/$\sim$jpnolan/.} \label{Z0fig}
\end{center}
\end{figure}

By an argument similar to the one in Remark \ref{normalremark} we now show that also $Z_0$ has a stable distribution. First we define, for each $c\in(\frac14,\frac12)$, the random variable
\begin{align*}
\widehat H(c):=H(c)+\frac{1}{1-2c}\,,
\end{align*}
so that $\widehat H(c)$ tends in distribution to $Z_0$ as $c\to\frac12-$. It follows from \eqref{stability} and \cite[Property 1.2.2]{ST} that $\widehat H(c)$ has the stable distribution $S_{\alpha(c)}(\sigma(c),\beta(c),\mu(c))$, where
\begin{align}\label{intparameters}
\big(\alpha(c),\sigma(c),\beta(c),\mu(c)\big)=\bigg(\frac{1}{2c},2\bigg(\frac{\Gamma(2-\frac{1}{2c})\cos(\frac{\pi}{4c})}{2(1-\frac{1}{2c})}\bigg)^{2c},1,\frac{1}{1-2c}\bigg). 
\end{align}
Note that, since $\lim_{c\to\frac12-}\alpha(c)=1$ and the characteristic function for a stable distribution (in this parameterization) does not vary continuously with respect to $\alpha$ at $\alpha=1$, we cannot take the limit directly in \eqref{intparameters}. However, using \cite[p.\ 7, Rem.\ 4]{ST}, we find that $\widehat H(c)$ tends in distribution to $S_{\alpha}(\sigma,\beta,\mu)$, where
\begin{align*}
(\alpha,\sigma,\beta,\mu)&=\lim_{c\to\frac12-}\Big(\alpha(c),\sigma(c),\beta(c),\mu(c)+\beta(c)\sigma(c)^{\alpha(c)}\tan\Big(\frac{\pi\alpha(c)}{2}\Big)\Big)\\
&=\big(1,\sfrac{\pi}{2},1,1-\log2-\gamma\big).
\end{align*} 
(Here $\gamma$ is Euler's constant.) Hence we conclude that $Z_0$ has the $1$-stable distribution $S_1(\sfrac{\pi}{2},1,1-\log2-\gamma)$.

\begin{figure}[t]
\begin{center}
\includegraphics[width=0.8\textwidth]{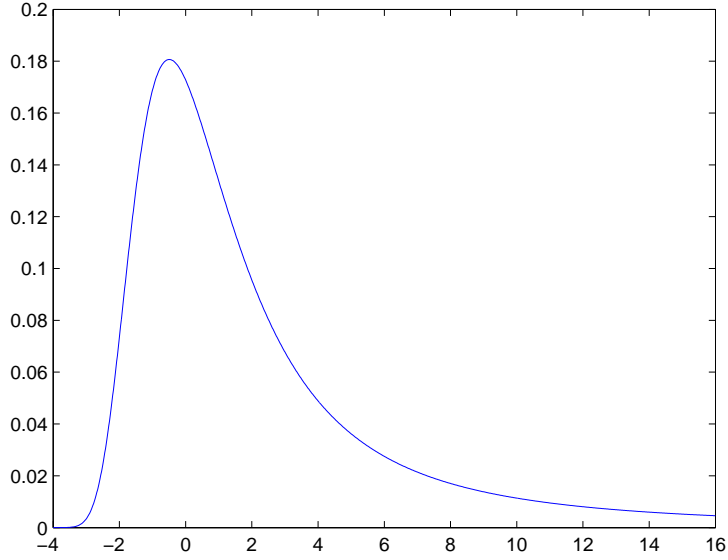}
\caption{The probability density function of $Z_0$. The figure was generated by the program STABLE, which is available from J. P. Nolan's website http://academic2.american.edu/$\sim$jpnolan/.} \label{Z0fig}
\end{center}
\end{figure}

\section{Proof of Theorem \ref{Rnthm}}\label{Rnsec}

Recall that the proof of Lemma \ref{Rest} only uses the monotonicity of $N(V)$ and the variance relation \eqref{variance} (where an upper bound ``$\ll V_2-V_1$'' suffices), and makes no further use of the fact that $R(V)$ is defined in terms of a Poisson process. For this reason, it turns out that the proof of Theorem \ref{Rnthm} can be completed by a direct mimic of the proof of Lemma \ref{Rest}, once we have Lemma \ref{Rnlem1} below. 

\begin{lem}\label{Rnlem1}
For all $A\geq0$, $\Delta>0$ and $n\geq3$ we have 
\begin{align}\label{Rnlem2}
\mathbb E\Big(\big(R_n(A+\Delta)-R_n(A)\big)^2\Big)<5\Delta.
\end{align}
\end{lem}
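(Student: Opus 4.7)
The approach is to reformulate \eqref{Rnlem2} as a variance bound for a lattice-point count in a spherical shell, and then to evaluate the second moment using the Siegel--Rogers mean value formulas. Let $B:=\{x\in\R^n:A<V_n|x|^n\leq A+\Delta\}$, which is a spherical shell of volume $\Delta$ and is symmetric about the origin, and set
\begin{align*}
M:=\sum_{\vecm\in L\setminus\{0\}}\chi_B(\vecm)=N_n(A+\Delta)-N_n(A).
\end{align*}
Then $R_n(A+\Delta)-R_n(A)=M-\Delta$. Since Siegel's mean value theorem gives $\mathbb E_{\mu_n}(M)=\mathrm{vol}(B)=\Delta$, the desired estimate is equivalent to $\mathrm{Var}(M)<5\Delta$.

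To evaluate $\mathbb E_{\mu_n}(M^2)$ I would split the pair-sum $\sum_{\vecm_1,\vecm_2\in L\setminus\{0\}}\chi_B(\vecm_1)\chi_B(\vecm_2)$ according to whether $\vecm_1,\vecm_2$ are $\Z$-linearly independent. For $n\geq3$ Rogers' formula identifies the linearly-independent contribution as exactly $(\int\chi_B)^2=\Delta^2$. Each collinear pair has a unique representation $(j\vecv,k\vecv)$ with $\vecv$ a primitive lattice vector (one orientation fixed per line) and $(j,k)\in(\Z\setminus\{0\})^2$. Applying Siegel's mean value formula for primitive lattice vectors, $\mathbb E_{\mu_n}\bigl(\sum_{\vecv\text{ primitive}}g(\vecv)\bigr)=\zeta(n)^{-1}\int g$, the collinear contribution equals
\begin{align*}
\frac{1}{2\zeta(n)}\sum_{(j,k)\in(\Z\setminus\{0\})^2}\int_{\R^n}\chi_B(jx)\chi_B(kx)\,dx,
\end{align*}
with the prefactor $\tfrac12$ compensating for the $\vecv\leftrightarrow-\vecv$ double-count.

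The key integral estimate is $\int_{\R^n}\chi_B(jx)\chi_B(kx)\,dx\leq\Delta/m^n$ with $m=\max(|j|,|k|)$, which holds because the integrand is the indicator of a shell contained in $\{x:|x|\in[r_1/m,r_2/m]\}$, where $r_1,r_2$ are the inner and outer radii of $B$. The diagonal part ($j=k$) of the above sum evaluates (using $\sum_{j\in\Z^*}|j|^{-n}=2\zeta(n)$) to exactly $\Delta$, consistent with $\mathbb E(M)=\Delta$; the off-diagonal part has $2$ pairs at $m=1$ (the antipodes $(\pm1,\mp1)$) and at most $8m-6$ pairs at each $m\geq2$. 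Combining,
\begin{align*}
\mathrm{Var}(M)\leq\Delta+\frac{1}{2\zeta(n)}\biggl(2\Delta+\sum_{m\geq2}(8m-6)\Delta\, m^{-n}\biggr),
\end{align*}
and a short numerical check shows this is at most $3.5\Delta<5\Delta$ uniformly for $n\geq3$ (the worst case is $n=3$, where the bracketed term equals $2+8(\zeta(2)-1)-6(\zeta(3)-1)\approx5.95$ and $2\zeta(3)\approx2.40$; for larger $n$ both the tail and $\zeta(n)^{-1}$ only improve the bound). The main obstacle is the bookkeeping surrounding the Siegel--Rogers identities: correctly matching the orbit-parametrization prefactors ($\tfrac12$ for orientation and $\zeta(n)^{-1}$ for primitivity), securing the identification $\Delta^2$ for the linearly-independent contribution, and tracking the constants uniformly in $n\geq3$ to keep a margin below the required $5\Delta$.
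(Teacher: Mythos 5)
Your proposal is correct and follows essentially the same route as the paper: both reduce the variance to the collinear correction term in Rogers' pair-correlation formula after the linearly-independent pairs contribute exactly $\Delta^2$ (cancelling the subtracted $\Delta^2$), and then bound the resulting sum over collinear pairs uniformly in $n\geq3$. The only differences are cosmetic --- the paper quotes Rogers' formula wholesale and evaluates the shell integrals exactly before bounding, whereas you reassemble the collinear term from the primitive-vector Siegel formula and bound the integrals by containment, arriving at a slightly sharper constant (about $3.5\Delta$ versus the paper's $4.74\Delta$ at $n=3$).
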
 

Note that it follows from Siegel's mean value formula \cite{siegel} that
\begin{align*}
\mathbb E\big(R_n(A+\Delta)-R_n(A)\big)=0,
\end{align*}
and hence also that the left hand side of \eqref{Rnlem2} equals the variance of $R_n(A+\Delta)-R_n(A)$.

\begin{proof}[Proof of Lemma \ref{Rnlem1}]
Recall that $V_n$ denotes the volume of the unit ball in $\R^n$ and that $V_n=\omega_n/n$, where $\omega_n$ is the $(n-1)$-dimensional volume of the unit sphere $S^{n-1}\subset\R^n$.
To begin with we note that 
\begin{multline*}
\mathbb E\Big(\big(R_n(A+\Delta)-R_n(A)\big)^2\Big)
=\mathbb E\Big(\big(N_n(A+\Delta)-N_n(A)\big)^2\Big)-\Delta^2\\
=\int_{X_n}\sum_{\vecm_1,\vecm_2\in L}I\Big(V_n|\vecm_1|^n,V_n|\vecm_2|^n\in(A,A+\Delta] \Big)\,d\mu_n(L)-\Delta^2.
\end{multline*}
Now recall that for any nonnegative Borel measurable function $\rho$ on $\R^n\times\R^n$ satisfying $\rho(\pm\vecx_1,\pm\vecx_2)=\rho(\vecx_1,\vecx_2)$, Rogers' mean value formula states that (cf.\ \cite[Thm.\ 4]{rogers})
\begin{align}\label{rogers}
&\int_{X_n}\sum_{\vecm_1,\vecm_2\in L\setminus\{\vec0\}}\rho(\vecm_1,\vecm_2)\,d\mu_n(L)\nonumber\\
&=\int_{\R^n}\int_{\R^n}\rho(\vecx_1,\vecx_2)\,d\vecx_1d\vecx_2+\sum_{e_1=1}^{\infty}\sum_{\substack{e_2\in\Z\setminus\{0\}\\ \gcd(e_1,e_2)=1}}\frac{1}{e_1^n}\int_{\R^n}\rho\Big(\vecx,\frac{e_2}{e_1}\vecx\Big)\,d\vecx\\
&=\int_{\R^n}\int_{\R^n}\rho(\vecx_1,\vecx_2)\,d\vecx_1d\vecx_2+\frac{2}{\zeta(n)}\sum_{d_1=1}^{\infty}\sum_{d_2=1}^{\infty}\int_{\R^n}\rho(d_1\vecx,d_2\vecx)\,d\vecx.\nonumber
\end{align}
Applying \eqref{rogers} with the function 
\begin{align*}
\rho(\vecx_1,\vecx_2):=I\Big(|\vecx_1|,|\vecx_2|\in\big(V_n^{-\frac1n}A^{\frac1n},V_n^{-\frac1n}(A+\Delta)^{\frac1n}\big] \Big)
\end{align*}
yields
\begin{align*}
&\mathbb E\Big(\big(R_n(A+\Delta)-R_n(A)\big)^2\Big)=\frac{2}{\zeta(n)}\sum_{d_1=1}^{\infty}\sum_{d_2=1}^{\infty}\int_{\R^n}\rho(d_1\vecx,d_2\vecx)\,d\vecx\\
&=\frac{2}{\zeta(n)}\sum_{d_1=1}^{\infty}\sum_{d_2=1}^{\infty}\omega_n\int_0^{\infty}I\Big(d_1r,d_2r\in\big(V_n^{-\frac1n}A^{\frac1n},V_n^{-\frac1n}(A+\Delta)^{\frac1n}\big] \Big)r^{n-1}\,dr\\
&=\frac{2\omega_n}{\zeta(n)}\sum_{1\leq d_1\leq d_2}\frac{2-I(d_1=d_2)}{d_1^n}\int_0^{\infty}I\Big(u,\frac{d_2}{d_1}u\in\big(V_n^{-\frac1n}A^{\frac1n},V_n^{-\frac1n}(A+\Delta)^{\frac1n}\big] \Big)u^{n-1}\,du\\
&=\frac{2\omega_n}{\zeta(n)}\sum_{1\leq d_1\leq d_2<(1+\Delta/A)^{1/n}d_1}\frac{2-I(d_1=d_2)}{d_1^n}\Big[\frac{u^n}{n}\Big]_{u=V_n^{-\frac1n}A^{\frac1n}}^{u=V_n^{-\frac1n}(d_1/d_2)(A+\Delta)^{\frac1n}}\\
&=\frac{2}{\zeta(n)}\sum_{1\leq d_1\leq d_2<(1+\Delta/A)^{1/n}d_1}\frac{2-I(d_1=d_2)}{d_1^n}\Big(\Big(\frac{d_1}{d_2}\Big)^n(A+\Delta)-A\Big)\\
&=2\Delta+\frac{4}{\zeta(n)}\sum_{1\leq d_1<d_2<(1+\Delta/A)^{1/n}d_1}d_1^{-n}\Big(\Big(\frac{d_1}{d_2}\Big)^n(A+\Delta)-A\Big).
\end{align*}
Note that for $1\leq d_1<d_2$ we have $\big(\frac{d_1}{d_2}\big)^n(A+\Delta)-A<\big(\frac{d_1}{d_2}\big)^n\Delta$. Hence
\begin{align*}
\mathbb E\Big(\big(R_n(A+\Delta)-R_n(A)\big)^2\Big)&<2\Delta+\frac{4}{\zeta(n)}\sum_{d_1=1}^{\infty}\sum_{d_2=d_1+1}^{\infty}d_2^{-n}\Delta\\
&<2\Delta+\frac{4}{\zeta(n)}\sum_{d_1=1}^{\infty}\bigg(\int_{d_1}^{\infty}x^{-n}\,dx\bigg)\Delta\\
&=\bigg(2+\frac{4\zeta(n-1)}{(n-1)\zeta(n)}\bigg)\Delta<5\Delta,
\end{align*}
which is the desired bound.
\end{proof}

\section{Treatment of the Epstein zeta function}\label{Epsteinsection}

When working with the Epstein zeta function in the critical strip it is often convenient to consider the normalized function $F_n(L,s)$ (cf.\ \eqref{Fn}). In particular this function has a simple expansion into incomplete gamma functions (cf.\ \cite[Thm.\ 2]{terras});
\begin{align}\label{Fn2}
 F_n(L,s)=\bigg(-\frac{1}{\sfrac{n}{2}-s}+{\sum_{\vecm\in L}}'G\big(s,\pi |\vecm|^2\big)\bigg)
+\bigg(-\frac{1}{s}+{\sum_{\vecm\in L^*}}'G\big(\sfrac{n}{2}-s,\pi |\vecm|^2\big)\bigg)
\end{align}
holds for $s\in\C\setminus\{0,\frac n2\}$, where 
\begin{align*}
G(s,x):=\int_1^{\infty}t^{s-1}e^{-xt}\,dt, \qquad \Re x>0.
\end{align*}
We define
\begin{align}\label{Hn}
 H_n(L,s):=-\frac{1}{\sfrac{n}{2}-s}+{\sum_{\vecm\in L}}'G\big(s,\pi |\vecm|^2\big),
 \end{align}
and thus the identity \eqref{Fn2} becomes
\begin{align}\label{Fn3}
F_n(L,s)=H_n(L,s)+H_n(L^*,\sfrac{n}{2}-s).
\end{align} 
Hence, to be able to understand the function $F_n(L,s)$ we need first to understand the function $H_n(L,s)$. As a first step, we observe that the integral obtained by replacing the summation over $L$ in \eqref{Hn} by integration over $\R^n$ can be evaluated explicitly:

\begin{lem}\label{int}
 For each $s\in\C$ with $\Re s<\frac n2$ we have
\begin{align*}
 \int_{\R^n}G\big(s,\pi|\vecx|^2\big)\,d\vecx=\frac{1}{\sfrac{n}{2}-s}.
\end{align*}
\end{lem}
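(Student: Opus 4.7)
The plan is to pass to polar coordinates, swap the order of integration between the radial variable and the variable $t$ inside the definition of $G$, and then carry out the two elementary one-dimensional integrals that remain.

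Writing $\omega_n = 2\pi^{n/2}/\Gamma(n/2)$ for the surface measure of $S^{n-1}$, the integral becomes
\[
\omega_n \int_0^\infty r^{n-1}\,G(s,\pi r^2)\,dr
= \omega_n \int_0^\infty r^{n-1}\int_1^\infty t^{s-1}e^{-\pi r^2 t}\,dt\,dr.
\]
Since $|t^{s-1}e^{-\pi r^2 t}| = t^{\Re s-1}e^{-\pi r^2 t}$ is nonnegative, Tonelli's theorem allows me to interchange the two integrations, and the total mass of the absolute value will turn out to be finite precisely because $\Re s<\tfrac n2$. After swapping, the inner integral
\[
\int_0^\infty r^{n-1}e^{-\pi r^2 t}\,dr = \frac{\Gamma(n/2)}{2(\pi t)^{n/2}}
\]
is a standard Gaussian moment, obtained via the substitution $u = \pi t r^2$.

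Substituting back and using $\omega_n \Gamma(n/2)/(2\pi^{n/2})=1$, the whole expression collapses to
\[
\int_1^\infty t^{s-1-n/2}\,dt = \frac{1}{\tfrac n2 - s},
\]
where the final evaluation is valid exactly because $\Re(s-n/2)<0$. This yields the stated identity.

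There is really no substantive obstacle: the only point requiring a moment's care is the interchange of integrations. If one prefers to avoid invoking Tonelli directly on the complex integrand, an equally clean route is to establish the identity first for real $s<\tfrac n2$ (where the integrand is positive and Tonelli applies without comment) and then to note that both sides of the claimed equality are holomorphic in $s$ on the connected half-plane $\{\Re s<\tfrac n2\}$, so that analytic continuation extends the identity to all complex $s$ in that half-plane.
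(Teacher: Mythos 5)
Your proof is correct and follows essentially the same route as the paper's: pass to spherical coordinates, interchange the radial and $t$ integrations, and evaluate the resulting Gamma-type integral in $t$ using $\Re(s-\frac n2)<0$. Your extra care in justifying the interchange via Tonelli (or via real $s$ plus analytic continuation) is a welcome but minor refinement of what the paper does implicitly.
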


\begin{proof}
Changing to spherical coordinates we have (recalling that $\omega_n$ denotes the $(n-1)$-dimensional volume of the unit sphere $S^{n-1}\subset\R^n$)
\begin{align}\label{INTGSXFORMULA}
\int_{\R^n}G\big(s,\pi|\vecx|^2\big)\,d\vecx&=\omega_n\int_0^{\infty}G\big(s,\pi r^2\big)r^{n-1}\,dr\\
&=\frac{\omega_n}{2}\pi^{-\frac{n}{2}}\int_0^{\infty}G(s,x)x^{\frac{n}{2}-1}\,dx\nonumber\\
&=\frac{\omega_n}{2}\pi^{-\frac{n}{2}}\int_0^{\infty}\int_1^{\infty}t^{s-1}e^{-xt}x^{\frac{n}{2}-1}\,dtdx\nonumber\\
&=\frac{\omega_n}{2}\pi^{-\frac{n}{2}}\Gamma(\sfrac{n}{2})\int_1^{\infty}t^{s-\frac{n}{2}-1}\,dt=\frac{1}{\sfrac{n}{2}-s},\nonumber
\end{align}
where we in the last step use the well-known identity $\omega_n=2\pi^{\frac{n}{2}}\Gamma(\sfrac{n}{2})^{-1}$.
\end{proof}

It follows from Siegel's mean value formula \cite{siegel} that the expectation value of the sum over $L$ in \eqref{Hn} equals the integral in Lemma \ref{int}, and hence we have:
\begin{align}\label{EXPC0}
\mathbb E\big(H_n(\cdot,s)\big)=0\qquad\text{for all $s$ with $\Re s<\sfrac n2$.}
\end{align}
In fact, for real $s$ all terms in the sum in \eqref{Hn} are positive, and we will see in the proof of Theorem \ref{curvethm} that for most lattices $L\in X_n$ with $n$ large, and $s\in(\frac n4,\frac n2)$, we have exponential cancellation between the sum and the term $-(\frac n2-s)^{-1}$: For any fixed $c\in(\frac14,\frac12)$ there exists some $\delta>0$ such that 
\begin{align}\label{EXPCANCELLATION}
\text{Prob}_{\mu_n}\Bigl\{L\in X_n\:\Big|\:\bigl|H_n(L,cn)\bigr|<e^{-\delta n}\Bigr\}\to1
\end{align}
as $n\to\infty$. (Cf.\ Remark \ref{cancellationremark} below.) Hence the analysis of $H_n(L,s)$ is quite delicate.

The key to capturing the exponential cancellation in \eqref{Hn} and getting control on the difference $H_n(L,s)$ is our Theorem \ref{Rnthm}, and our starting point is to rewrite \eqref{Hn} in terms of $R_n(V)$. Note that Lemma \ref{int} can be expressed as
\begin{align*}
\int_0^\infty G\Bigl(s,\pi\Bigl(\frac{nV}{\omega_n}\Bigr)^{\frac 2n}\Bigr)\,dV=\frac1{\frac n2-s}
\end{align*}
(indeed, substituting $x=\pi\big(\frac{nV}{\omega_n}\big)^{\frac2n}$ in the integral we get back the second line in \eqref{INTGSXFORMULA} above). Hence, recalling the definitions of $N_n(V)$ and $R_n(V)$ from the introduction, we have 
\begin{align}\label{Hint}
H_n(L,s)=-\frac{1}{\sfrac{n}{2}-s}+\int_0^{\infty}G\Big(s,\pi\Big(\frac{nV}{\omega_n}\Big)^{\frac{2}{n}}\Big)\,dN_n(V)
=\int_0^{\infty}G\Big(s,\pi\Big(\frac{nV}{\omega_n}\Big)^{\frac{2}{n}}\Big)\,dR_n(V),
\end{align}
for all $s$ with $0<s<\frac n2$. The idea is now that the tail of this integral will be small compared with the size of $H_n(L,s)$. The precise meaning of this statement will be clear below.

\begin{lem}\label{GAMMA}
For $0<x\leq s-1$ we have 
\begin{align*}
x^{-s}\Gamma(s)-e^{-x}\leq G(s,x)\leq x^{-s}\Gamma(s).
\end{align*}
\end{lem}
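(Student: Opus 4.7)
The plan is to reduce the lemma to a simple statement about the integral $\int_0^1 u^{s-1}e^{-xu}\,du$ via the substitution that converts $\Gamma(s)$ into the form matching $G(s,x)$. Specifically, substituting $t=xu$ in the definition of $\Gamma(s)$ yields
\begin{align*}
x^{-s}\Gamma(s)=\int_0^\infty u^{s-1}e^{-xu}\,du,
\end{align*}
so that, splitting the integral at $u=1$ and recognizing the tail as $G(s,x)$, we obtain the exact identity
\begin{align*}
x^{-s}\Gamma(s)-G(s,x)=\int_0^1 u^{s-1}e^{-xu}\,du.
\end{align*}

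The upper bound of the lemma is then immediate: the integrand on the right is nonnegative, so $G(s,x)\leq x^{-s}\Gamma(s)$ for all $x>0$ (the hypothesis $x\leq s-1$ is not needed for this half).

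For the lower bound I would exploit monotonicity on $[0,1]$. Differentiating $f(u):=u^{s-1}e^{-xu}$ gives
\begin{align*}
f'(u)=u^{s-2}e^{-xu}\bigl((s-1)-xu\bigr),
\end{align*}
and since by hypothesis $xu\leq x\leq s-1$ for all $u\in[0,1]$, we have $f'(u)\geq0$ throughout $[0,1]$. Hence $f$ is nondecreasing and bounded above by $f(1)=e^{-x}$ on this interval, so
\begin{align*}
\int_0^1 u^{s-1}e^{-xu}\,du\leq e^{-x},
\end{align*}
which combined with the identity above yields the stated lower bound $G(s,x)\geq x^{-s}\Gamma(s)-e^{-x}$.

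There is no real obstacle here; the only thing to watch is that the hypothesis $x\leq s-1$ is used precisely where needed, namely to ensure that the maximum of $f$ on $[0,1]$ is attained at the endpoint $u=1$ (the upper bound of the lemma is valid in far greater generality). The whole argument is four lines once the change of variables is in place.
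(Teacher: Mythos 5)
Your proof is correct and is essentially the paper's argument: both reduce the claim to the identity $x^{-s}\Gamma(s)-G(s,x)=\int_0^1 u^{s-1}e^{-xu}\,du$ (the paper writes the equivalent $x^{-s}\int_0^x u^{s-1}e^{-u}\,du$ after the substitution $u=xt$) and then bound that integral by $e^{-x}$ using the monotonicity of the integrand, which is exactly where the hypothesis $x\leq s-1$ enters. No issues.
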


\begin{proof}
From the definition of $G(s,x)$ we get
\begin{align*}
G(s,x)=\int_1^{\infty}t^{s-1}e^{-xt}\,dt=x^{-s}\int_x^{\infty}u^{s-1}e^{-u}\,du=x^{-s}\bigg(\Gamma(s)-\int_0^xu^{s-1}e^{-u}\,du\bigg).
\end{align*}
Here, since the function $u\mapsto u^{s-1}e^{-u}$ is increasing for $u\in(0,s-1)$, we have $0\leq\int_0^xu^{s-1}e^{-u}\,du\leq x^se^{-x}$ for $0<x\leq s-1$ and the lemma follows.
\end{proof}

Applying Stirling's formula we get
\begin{align}\label{omegan}
\omega_n=\frac{2\pi^{\frac{n}{2}}}{\Gamma(\sfrac{n}{2})}\sim\Big(\frac{2\pi e}{n}\Big)^{\frac n2}\Big(\frac{n}{\pi}\Big)^{\frac12}\qquad\text{ as $n\to\infty$}.
\end{align}
As a consequence we note that $\pi(\frac{n}{\omega_n})^{2/n}\sim\frac{n}{2e}$ as $n\to\infty$ and hence, for fixed $A>0$ and all large enough $n$, we have $\pi(\frac{nA}{\omega_n})^{2/n}<\frac{n}{4}-1$. Thus, for all $c\in[\frac14,\frac12)$ and $A$ and $n$ as above, Lemma \ref{GAMMA} applies to give
\begin{multline}\label{firstA}
\int_0^AG\Big(cn,\pi\Big(\frac{nV}{\omega_n}\Big)^{\frac{2}{n}}\Big)\,dR_n(V)=\Gamma(cn)\pi^{-cn}\Big(\frac{n}{\omega_n}\Big)^{-2c}\int_0^AV^{-2c}\,dR_n(V)\\
+O(1)\int_0^A\exp\Big\{-\pi\Big(\frac{nV}{\omega_n}\Big)^{\frac2n}\Big\}\big(dN_n(V)+dV\big),
\end{multline}
with an absolute implied constant. We choose not to consider this identity for $c=\frac12$ since in that case both the integrals in the first row of \eqref{firstA} are divergent. For notational convenience we set
\begin{align*}
K_{c,n}:=\Gamma(cn)\pi^{-cn}\Big(\frac{n}{\omega_n}\Big)^{-2c}.
\end{align*}

\begin{prop}\label{HAprop}
Let $A>0$ be fixed. Then, for all $k<\frac{1}{2e}$, we have
\begin{multline*}
\text{Prob}_{\mu_n}\bigg\{L\in X_n\,\Big|\,\bigg|K_{c,n}^{-1}\int_0^AG\Big(cn,\pi\Big(\frac{nV}{\omega_n}\Big)^{\frac{2}{n}}\Big)\,dR_n(V)-\int_0^AV^{-2c}\,dR_n(V)\bigg|\\
<K_{c,n}^{-1}e^{-kn}, \,\forall c\in[\sfrac14,\sfrac12)\bigg\}\to1
\end{multline*}
as $n\to\infty$.
\end{prop}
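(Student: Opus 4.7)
The main idea of my proof is to exploit the fact that the $O(1)$ error term in the identity \eqref{firstA} is \emph{independent of $c$}. Specifically, set
\[
J_n(L) := \int_0^A \exp\Big\{-\pi\Big(\frac{nV}{\omega_n}\Big)^{\frac{2}{n}}\Big\}\bigl(dN_n(V) + dV\bigr),
\]
a non-negative random variable on $X_n$. First I would verify that the hypothesis of Lemma \ref{GAMMA} holds uniformly in $c \in [\frac14,\frac12)$ and $V \in (0,A]$ for all sufficiently large $n$; this reduces to checking $\pi(nA/\omega_n)^{2/n} \leq \frac{n}{4} - 1$, which follows from \eqref{omegan} because $\pi(nA/\omega_n)^{2/n} = \frac{n}{2e} + O(\log n)$ and $\frac{1}{2e} < \frac{1}{4}$. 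The identity \eqref{firstA} together with the crude estimate $|dR_n| \leq dN_n + dV$ then gives
\[
\Big|K_{c,n}^{-1}\!\int_0^A G\Big(cn,\pi\Big(\frac{nV}{\omega_n}\Big)^{\frac{2}{n}}\Big)\,dR_n(V) - \int_0^A V^{-2c}\,dR_n(V)\Big| \ll K_{c,n}^{-1}\, J_n(L),
\]
so it will suffice to prove that for every fixed $k < 1/(2e)$, $\text{Prob}_{\mu_n}\{J_n(L) < e^{-kn}\} \to 1$ as $n \to \infty$.

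To bound $J_n$ in probability I would apply Markov's inequality. Siegel's mean value formula gives $\mathbb E \int_0^A f(V)\,dN_n(V) = \int_0^A f(V)\,dV$ for nonnegative Borel $f$, hence
\[
\mathbb E\bigl(J_n\bigr) = 2\int_0^A e^{-\pi(nV/\omega_n)^{2/n}}\,dV.
\]
I would then estimate this integral by the substitution $w = aV^{2/n}$ with $a := \pi(n/\omega_n)^{2/n}$, converting it to $\frac{n}{2a^{n/2}}\,\gamma(\frac{n}{2}, aA^{2/n})$, where $\gamma$ is the lower incomplete gamma function. Since \eqref{omegan} yields $a = \frac{n}{2e} + O(\log n) < \frac{n}{2} - 1$ for large $n$, the integrand $w^{n/2-1}e^{-w}$ is monotonically increasing on $[0, aA^{2/n}]$, giving $\gamma(\frac{n}{2}, aA^{2/n}) \leq (aA^{2/n})^{n/2}e^{-aA^{2/n}}$, and so $\mathbb E(J_n) \leq nA\, e^{-aA^{2/n}} = e^{-n/(2e) + O(\log n)}$ after using $A^{2/n} = 1 + O(1/n)$.

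The proposition then follows at once from Markov's inequality: for any fixed $k < 1/(2e)$,
\[
\text{Prob}_{\mu_n}\bigl\{J_n(L) > e^{-kn}\bigr\} \leq e^{kn}\,\mathbb E(J_n) \leq \exp\Big\{\Big(k - \frac{1}{2e}\Big)n + O(\log n)\Big\} \to 0.
\]
Since $J_n$ is independent of $c$, the event $\{J_n(L) \leq e^{-kn}\}$ automatically yields the required inequality simultaneously for all $c \in [\frac14, \frac12)$. I do not anticipate any serious obstacle; the threshold $k < 1/(2e)$ is sharp for this method and simply reflects the Stirling asymptotic $\pi(n/\omega_n)^{2/n} \sim n/(2e)$.
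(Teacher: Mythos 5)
Your proof is correct, and for the probabilistic half of the argument it takes a genuinely different route from the paper. Both proofs start from the identity \eqref{firstA} and reduce the problem to showing that $\int_0^A \exp\{-\pi(nV/\omega_n)^{2/n}\}\,(dN_n(V)+dV)<e^{-kn}$ with probability tending to $1$; and both estimate the deterministic $dV$-integral the same way (your incomplete-gamma computation is exactly the paper's \eqref{secondA} plus the monotonicity of $w^{n/2-1}e^{-w}$). The difference lies in the $dN_n$-part: the paper invokes the Poisson-type counting result (Rogers \cite{rogers3}, cf.\ \cite{poisson}) to get, with probability $>1-\ve$, both $N_n(A)<M$ and $N_n(M^{-1})=0$, and then bounds the sum pointwise by $M\exp\{-\pi(nM^{-1}/\omega_n)^{2/n}\}$; you instead observe that by Siegel's mean value formula the expectation of the whole error term $J_n(L)$ equals twice the $dV$-integral already computed, and conclude by Markov's inequality. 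Your version is arguably cleaner: it avoids the appeal to the lattice-point counting results, treats both parts of the error in one stroke, and yields an exponentially small exception probability rather than an unspecified $\ve$; the paper's version gives pointwise information on $N_n$ that it reuses elsewhere (e.g.\ in the proof of Proposition \ref{JAprop}), so nothing is lost either way. One cosmetic point: your chain of inequalities produces an absolute implied constant in front of $K_{c,n}^{-1}e^{-kn}$, so to get the literal strict inequality of the statement you should run the argument with some $k'\in(k,\frac1{2e})$ and absorb the constant for large $n$ --- exactly the adjustment the paper makes in its last two sentences.
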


\begin{proof}
We consider the integral with respect to $dN_n(V)$ and the integral with respect to $dV$ separately in the error term in \eqref{firstA}. Changing variables $V=\frac{\omega_n}{n}(\frac{x}{\pi})^{n/2}$ yields
\begin{align}\label{secondA}
\int_0^A\exp\Big\{-\pi\Big(\frac{nV}{\omega_n}\Big)^{\frac2n}\Big\}\,dV=\frac{\omega_n}{2}\pi^{-\frac n2}\int_0^{\pi(nA/\omega_n)^{2/n}}e^{-x}x^{\frac n2-1}\,dx.
\end{align}
Recalling that we have $\pi(nA/\omega_n)^{2/n}<k'n$ for any fixed $k'>\frac{1}{2e}$ and all sufficiently large $n$, as well as the fact that $x\mapsto e^{-x}x^{\frac n2-1}$ is increasing for all $0<x<\frac n2-1$, we find that, for $\frac{1}{2e}<k'<\frac12$ and large enough $n$, \eqref{secondA} is
\begin{align*}
O\bigg(\omega_n\Big(\frac{k'n}{\pi}\Big)^{\frac n2}e^{-k'n}\bigg)=O\bigg(n^{\frac12}\text{exp}\Big(\big(\sfrac{1}{2}\log(2ek')-k'\big)n\Big)\bigg).
\end{align*}
By taking $k'$ sufficiently close to $\frac1{2e}$ it follows that for any fixed $k<\frac{1}{2e}$ there exists $n_0\in\Z_{\geq1}$ (which also depends on $A$) such that 
\begin{align*}
\int_0^A\exp\Big\{-\pi\Big(\frac{nV}{\omega_n}\Big)^{\frac2n}\Big\}\,dV<e^{-kn}
\end{align*}
for all $n\geq n_0$.

Next, let $\ve>0$ be given. By possibly increasing $n_0$ it follows from \cite[Thm.\ 3]{rogers3} (cf.\ also \cite[Thm.\ 1]{poisson}) that there exists $M\in \Z_{\geq1}$ such that for $n\geq n_0$ we have both $N_n(A)<M$ and $N_n(M^{-1})=0$ with probability $>1-\ve$. Since also $(M^{-1})^{2/n}\to1$ as $n\to\infty$, we conclude that for any fixed constant $k<\frac{1}{2e}$ and all $n\geq n_0$ (with a possibly even larger $n_0$ depending on $k$) we have 
\begin{align*}
\int_0^A\exp\Big\{-\pi\Big(\frac{nV}{\omega_n}\Big)^{\frac2n}\Big\}\,dN_n(V)<M\exp\Big\{-\pi\Big(\frac{nM^{-1}}{\omega_n}\Big)^{\frac2n}\Big\}<e^{-kn},
\end{align*}
with probability $>1-\ve$. Hence for our fixed $A>0$ and $k<\frac1{2e}$ and all $n\geq n_0$, the absolute error in \eqref{firstA} is $<Ce^{-kn}$, where $C$ is an absolute constant, with probability $>1-\ve$. Thus for any $k'<k$ the absolute error is $<e^{-k'n}$ for all sufficiently large $n$ with probability $>1-\ve$, and the proposition follows. 
\end{proof}

\begin{remark}\label{Kremark}
We stress that with an appropriate choice of $k$, the upper bound $K_{c,n}^{-1}e^{-kn}$ in Proposition \ref{HAprop} tends to zero as $n\to\infty$, uniformly with respect to $c\in[\frac14,\frac12]$. Indeed, note that 
\begin{align*}
K_{c,n}^{-1}e^{-kn}&=O\bigg(n^{\frac12}\Big(\frac{\pi e}{cn}\Big)^{cn}n^{2c}\Big(\frac{n}{2\pi e}\Big)^{cn}n^{-c}e^{-kn}\bigg)\\
&=O\bigg(n^{\frac12+c}\,\text{exp}\Big(-\big(k+c\log(2c)\big)n\Big)\bigg).
\end{align*} 
Here $k+c\log(2c)\geq k-\frac14\log2=k-0.1732...$ for all $c\in[\frac14,\frac12]$, and the last difference is positive when $k$ is sufficiently close to $\frac{1}{2e}=0.1839...$. 
\end{remark}

Next we estimate the tail of the integral giving $H_n(L,s)$, normalized in the same way as the integral in Proposition \ref{HAprop}. The proof is similar to the proof of Lemma \ref{intest}. We first recall two bounds on $G(s,x)$ which will be used several times in this paper.

\begin{lem}\label{sstbound}
The following bound holds uniformly for all $x>0$, $s\geq1$,
\begin{align*}
G(s,x)\ll s^{-\frac12}\Big(\frac{ex}{s}\Big)^{-s}.
\end{align*}
In the case $x\geq s\geq1$ we also have the stronger bound
\begin{align*}
G(s,x)\ll s^{-\frac12}e^{-x}.
\end{align*}
\end{lem}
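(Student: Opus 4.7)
The plan is to treat the two bounds separately, since they are tight in different ranges of $x$ (the first bound is sharp near the saddle point $x\approx s/e$, while the second dominates in the tail $x\gg s$).

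For the first inequality, the natural substitution $u=xt$ immediately converts the defining integral into a tail of the Gamma function:
\begin{align*}
G(s,x)=\int_1^{\infty}t^{s-1}e^{-xt}\,dt=x^{-s}\int_x^{\infty}u^{s-1}e^{-u}\,du\leq x^{-s}\Gamma(s).
\end{align*}
Stirling's formula gives $\Gamma(s)\ll s^{s-\frac12}e^{-s}$ uniformly for $s\geq 1$, and combining these two estimates yields exactly $G(s,x)\ll s^{-\frac12}(ex/s)^{-s}$.

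For the second inequality, valid when $x\geq s\geq 1$, the integrand $t^{s-1}e^{-xt}$ is decreasing on $[1,\infty)$ (since $x\geq s>s-1$), so its maximum on the range of integration is $e^{-x}$ at $t=1$. A crude bound replacing $t^{s-1}$ by $e^{(s-1)(t-1)}$ only produces $G(s,x)\ll e^{-x}/(x-s+1)$, which lacks the factor $s^{-\frac12}$ when $x$ is close to $s$. To get that extra decay one needs a genuine Laplace-type estimate near $t=1$. My plan is to substitute $t=e^y$, which rewrites
\begin{align*}
G(s,x)=\int_0^{\infty}e^{-\psi(y)}\,dy,\qquad \psi(y):=xe^y-sy,
\end{align*}
and then to use the elementary inequality $e^y\geq 1+y+\tfrac12 y^2$ for $y\geq 0$ to bound $\psi(y)\geq x+(x-s)y+\tfrac12 xy^2$. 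In the regime $x\geq s$ the linear term is nonnegative, so $\psi(y)\geq x+\tfrac12 xy^2$ and
\begin{align*}
G(s,x)\leq e^{-x}\int_0^{\infty}e^{-\frac12 xy^2}\,dy=e^{-x}\sqrt{\pi/(2x)}\ll x^{-\frac12}e^{-x}\leq s^{-\frac12}e^{-x}.
\end{align*}

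The only real subtlety is choosing the right substitution and inequality so that, after bounding the integrand by a Gaussian, the resulting Gaussian width produces $s^{-\frac12}$ rather than a trivial constant; the substitution $t=e^y$ together with the quadratic lower bound on $e^y-1$ accomplishes this cleanly in a single line. Both bounds are then uniform in the stated ranges, with absolute implied constants coming only from Stirling's formula and the value of $\int_0^{\infty}e^{-y^2/2}\,dy$.
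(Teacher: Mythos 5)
Your proof is correct. Note that the paper does not actually prove this lemma: it simply cites \cite[Cor.\ 2]{sst}, so your argument is a self-contained replacement rather than a variant of the paper's reasoning. Both halves check out. For the first bound, the substitution $u=xt$ gives $G(s,x)\leq x^{-s}\Gamma(s)$ (the same identity the paper uses in its Lemma on $G(s,x)$ for $0<x\leq s-1$), and Stirling's $\Gamma(s)\ll s^{s-\frac12}e^{-s}$ for $s\geq1$ converts this into $s^{-\frac12}(ex/s)^{-s}$ with an absolute constant. For the second bound, the substitution $t=e^y$ turns the integral into $\int_0^\infty e^{-(xe^y-sy)}\,dy$, and the inequality $e^y\geq1+y+\frac12y^2$ for $y\geq0$ together with $x\geq s$ gives the Gaussian majorant $e^{-x}e^{-\frac12xy^2}$, hence $G(s,x)\ll x^{-\frac12}e^{-x}\leq s^{-\frac12}e^{-x}$. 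You correctly identified the one subtle point, namely that a naive bound on $[1,\infty)$ loses the $s^{-\frac12}$ factor when $x$ is close to $s$, and the quadratic term in the expansion of $e^y$ is exactly what recovers it. The only cosmetic remark is that your second estimate actually proves the slightly stronger $G(s,x)\ll x^{-\frac12}e^{-x}$ in the range $x\geq s\geq1$, which you might as well state.
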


\begin{proof}
Cf.\ \cite[Cor.\ 2]{sst}.
\end{proof}

\begin{lem}\label{Htail}
Let $c_1\in(\frac{1}{4},\frac12)$. Then, for all $\ve>0$ there exist constants $A_0>0$ and $n_0\in\Z_{\geq3}$ such that for all $A\geq A_0$  and $n\geq n_0$ we have
\begin{align*}
\text{Prob}_{\mu_n}\,\bigg\{L\in X_n\,\Big|\,\sup_{c\in[c_1,\frac12]}\bigg|\,K_{c,n}^{-1}\int_A^{\infty}G\Big(cn,\pi\Big(\frac{nV}{\omega_n}\Big)^{\frac{2}{n}}\Big)\,dR_n(V)\bigg|\leq\ve\bigg\}\geq1-\ve.
\end{align*}
\end{lem}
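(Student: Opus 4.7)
The plan is to mimic the proof of Lemma \ref{intest}: integrate by parts against $dR_n(V)$ and control the resulting ordinary integral by the uniform tail bound on $|R_n(V)|$ from Theorem \ref{Rnthm}. The extra wrinkle compared to Lemma \ref{intest} is that the integrand $G(cn,x(V))$, with $x(V):=\pi(nV/\omega_n)^{2/n}$, must be compared to its principal ``model term'' $K_{c,n}V^{-2c}$; the saving observation I plan to exploit is that this comparison can be made pointwise and cleanly, eliminating any need to split the $V$-range according to whether $x(V)\leq cn$ or not.

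The starting point is that from $G(s,x)=x^{-s}\int_x^\infty u^{s-1}e^{-u}\,du$ one has the trivial upper bound $G(s,x)\leq x^{-s}\Gamma(s)$ for \emph{all} $x>0$ (the restriction $x\leq s-1$ appearing in Lemma \ref{GAMMA} is only needed for the accompanying lower bound). Combined with $x'(V)=2x(V)/(nV)$, $\Gamma(cn+1)=cn\,\Gamma(cn)$, and the tautology $x(V)^{-cn}\Gamma(cn)=K_{c,n}V^{-2c}$ (coming straight from the definition of $K_{c,n}$), this yields the two uniform pointwise bounds
\begin{align*}
G(cn,x(V))\leq K_{c,n}V^{-2c}, \qquad G(cn+1,x(V))\,x'(V)\leq 2c\,K_{c,n}V^{-2c-1},
\end{align*}
valid for all $V>0$, $c>0$ and $n\geq 1$. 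These are precisely the estimates that will compensate the factor $K_{c,n}^{-1}$ in the lemma.

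Given $\ve>0$, I would then fix a small auxiliary $\ve'>0$, apply Theorem \ref{Rnthm} with $C=C(\ve)\geq 1$ chosen so that $C^{-1}\leq \ve$, and work on the resulting Borel set $\mathcal{G}\subset X_n$ with $\mu_n(\mathcal{G})\geq 1-\ve$ on which $|R_n(V)|\leq C_{\ve'}(CV)^{1/2}(\log V)^{3/2+\ve'}$ for all $V\geq 10$ (uniformly in $n\geq 3$). On $\mathcal{G}$, Riemann--Stieltjes integration by parts gives
\begin{align*}
\int_A^\infty G(cn,x(V))\,dR_n(V) = -G(cn,x(A))R_n(A) + \int_A^\infty R_n(V)\,G(cn+1,x(V))\,x'(V)\,dV,
\end{align*}
with vanishing boundary term at infinity because $|G(cn,x(V))R_n(V)|\ll K_{c,n}V^{1/2-2c}(\log V)^{3/2+\ve'}\to 0$ as $V\to\infty$, using $c\geq c_1>\tfrac14$. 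Dividing by $K_{c,n}$ and inserting the two pointwise bounds collapses everything to a Lemma \ref{intest}-type estimate
\begin{align*}
K_{c,n}^{-1}\Bigl|\int_A^\infty G(cn,x(V))\,dR_n(V)\Bigr| &\leq A^{-2c}|R_n(A)| + 2c\int_A^\infty |R_n(V)|V^{-2c-1}\,dV\\
&\ll_{\ve,c_1,\ve'} A^{1/2-2c_1}(\log A)^{3/2+\ve'},
\end{align*}
uniformly for $c\in[c_1,\tfrac12]$ and $n\geq 3$. Choosing $A_0$ so that the right-hand side is $\leq \ve$, and taking $n_0=3$, will finish the proof.

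The only real point of substance, and hence the main potential obstacle, is locating the clean pointwise bound $G(cn+1,x(V))\,x'(V)\leq 2c\,K_{c,n}V^{-2c-1}$. Without it one is forced to split the $V$-range at $x(V)\sim cn$ and work much harder on the large-$V$ tail, since naive pointwise bounds on $G(cn+1,x(V))x'(V)$ produce spurious factors of $n^c$ that grow with $n$ and can only be absorbed by the exponential decay of $G$ in the range $x(V)\geq cn$. With the clean bound in hand, uniformity in $c\in[c_1,\tfrac12]$ is automatic from the $c_1$-dependent convergence exponent $\tfrac12-2c_1<0$.
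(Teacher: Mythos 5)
Your proposal is correct and follows essentially the same route as the paper's proof: integrate by parts against $dR_n(V)$ using $\frac{\partial}{\partial x}G(s,x)=-G(s+1,x)$, control $|R_n(V)|$ on a set of measure $\geq1-\ve$ via Theorem \ref{Rnthm}, and conclude with a uniform $A^{\frac12-2c_1}$-type tail bound. The only (valid, and slightly cleaner) deviation is that you replace the paper's use of Lemma \ref{sstbound} by the elementary inequality $G(s,x)\leq x^{-s}\Gamma(s)$, which indeed holds for all $x>0$ and turns the comparison with $K_{c,n}V^{-2c}$ into an exact identity rather than an estimate up to Stirling-type constants valid only for $cn\geq1$; note, though, that the paper's version does not require any splitting of the $V$-range either, so your closing remark somewhat overstates what the clean bound buys.
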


\begin{proof}
Let $\ve>0$ and $\delta\in(0,2c_1-\frac12)$ be given. It follows from Theorem \ref{Rnthm} that for each $n\geq3$ there exists a set $X_n'\subset X_n$ with $\mu_n (X_n')\geq 1-\ve$ such that for all $L\in X_n'$ and all $V\geq 10$ we have $|R_n(V)|\ll_{\ve,\delta}V^{\frac{1}{2}+\delta}$, where the implied constant is independent of $n$ and $L$. Now, integrating by parts and using \mbox{$\frac {\partial}{\partial x}G(s,x)=-G(s+1,x)$,} we have
\begin{multline}\label{intbyparts}
\int_A^{\infty}G\Big(cn,\pi\Big(\frac{nV}{\omega_n}\Big)^{\frac{2}{n}}\Big)\,dR_n(V)=\Big[G\Big(cn,\pi\Big(\frac{nV}{\omega_n}\Big)^{\frac{2}{n}}\Big)R_n(V)\Big]_{V=A}^{V=\infty}\\
+\frac{2\pi}{n}\Big(\frac{n}{\omega_n}\Big)^{\frac2n}\int_{A}^{\infty}G\Big(cn+1,\pi\Big(\frac{nV}{\omega_n}\Big)^{\frac{2}{n}}\Big)V^{\frac2n-1}R_n(V)\,dV.
\end{multline}
Hence, using Lemma \ref{sstbound} we get, for any $L\in X_n'$ (with $n$ sufficiently large) and all $A\geq10$,
\begin{multline*}
\bigg|K_{c,n}^{-1}\int_A^{\infty}G\Big(cn,\pi\Big(\frac{nV}{\omega_n}\Big)^{\frac{2}{n}}\Big)\,dR_n(V)\bigg|\\
\ll_{\ve,\delta} A^{-2c+\frac12+\delta}
+\Big(\frac{cn+1}{cn}\Big)^{cn}\int_{A}^{\infty}V^{-2c-\frac12+\delta}\,dV
\ll_{\ve,\delta,c_1} A^{-2c_1+\frac12+\delta},
\end{multline*}
uniformly over all $c\in[c_1,\frac12]$. Thus we can make the left hand side above as small as we like, by choosing $A$ large enough.
\end{proof}

Given $\ve>0$ and $c_1\in(\frac14,\frac12)$, it follows from \eqref{Hint}, Proposition \ref{HAprop} and Lemma \ref{Htail} that there exists $A_0>0$ such that for all $A\geq A_0$ there exists $n_0\in\Z_{\geq3}$ such that for all $n\geq n_0$ we have
\begin{align}\label{important1}
\text{Prob}_{\mu_n}\bigg\{L\in X_n\:\Big|\:\sup_{c\in[c_1,\frac12)}\bigg|K_{c,n}^{-1}H_n(L,cn)-\int_0^AV^{-2c}\,dR_n(V)\bigg|\leq\ve\bigg\}\geq1-\ve.
\end{align}

Since our goal is  to understand the function $F_n(L,cn)$ for $c\in[c_1,\frac12)$ it remains to study $J_n(L,s):=H_n(L,\frac n2-s)$ for $s=cn$ with $c\in[c_1,\frac12)$ (recall \eqref{Fn3}).

\begin{prop}\label{JAprop}
Given any $c_1\in(\frac{1}{4},\frac12)$ there exists a constant $k>0$ such that 
\begin{align*}
\text{Prob}_{\mu_n}\Big\{L\in X_n\,\Big|\,\,K_{c,n}^{-1}\big|J_n(L,cn)\big|<e^{-kn},\,\forall c\in[c_1,\sfrac12]\Big\}\to1
\end{align*}
as $n\to\infty$.
\end{prop}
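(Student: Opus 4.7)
The plan is to bound $J_n(L,cn)$ via the second moment method, taking advantage of the fact that the conjugate exponent $c':=\sfrac12-c$ lies strictly below $\sfrac14$ throughout $c\in[c_1,\sfrac12]$. Since $J_n(L,cn)=H_n(L,c'n)=-\sfrac{1}{cn}+{\sum_{\vecm\in L}}'G(c'n,\pi|\vecm|^2)$, Siegel's mean value formula together with Lemma \ref{int} give $\mathbb E(J_n(\cdot,cn))=0$ (see also \eqref{EXPC0}). It therefore suffices to establish an exponentially small variance bound for $J_n(\cdot,cn)$ and then apply Chebyshev's inequality, combined with a chaining argument to handle the supremum over $c$.

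The key step is Rogers' formula \eqref{rogers} applied with $\rho(\vecx_1,\vecx_2)=G(c'n,\pi|\vecx_1|^2)G(c'n,\pi|\vecx_2|^2)$, which gives
\begin{align*}
\text{Var}(J_n(\cdot,cn))=\frac{2}{\zeta(n)}\sum_{d_1,d_2\geq1}\int_{\R^n}G(c'n,\pi d_1^2|\vecx|^2)G(c'n,\pi d_2^2|\vecx|^2)\,d\vecx.
\end{align*}
Writing each $G$-factor via its defining iterated integral and swapping with the resulting Gaussian $\vecx$-integral, the inner integral reduces to $\int_{[1,\infty)^2}(t_1t_2)^{c'n-1}(d_1^2t_1+d_2^2t_2)^{-n/2}\,dt_1dt_2$. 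The elementary AM-GM inequality $d_1^2t_1+d_2^2t_2\geq 2d_1d_2\sqrt{t_1t_2}$ decouples the $d_i$'s from the $t_i$'s and yields the upper bound $\sfrac{16}{n^2(1-4c')^2}(2d_1d_2)^{-n/2}$; this is legitimate precisely because $c'<\sfrac14$ makes the exponent $c'n-1-\sfrac n4$ strictly less than $-1$, and $(1-4c')^{-2}$ is uniformly bounded by $(4c_1-1)^{-2}$ thanks to $c_1>\sfrac14$. Summing over $d_1,d_2$ produces only an additional factor $2^{-n/2}\zeta(\sfrac n2)^2=O(2^{-n/2})$, so altogether
\begin{align*}
\text{Var}(J_n(\cdot,cn))\ll_{c_1}2^{-n/2}/n^2
\end{align*}
uniformly in $c\in[c_1,\sfrac12]$.

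Chebyshev's inequality now yields $\text{Prob}_{\mu_n}(|J_n(L,cn)|>K_{c,n}e^{-kn})\ll K_{c,n}^{-2}\cdot 2^{-n/2}e^{2kn}/n^2$, and the Stirling-based asymptotic $K_{c,n}^{-1}=O(n^{\sfrac12+c}\cdot(2c)^{-cn})$ underlying Remark \ref{Kremark} recasts this as $\text{poly}(n)\cdot \exp\bigl(-n[\sfrac12\log2+2c\log(2c)-2k]\bigr)$. Since $c\log(2c)$ is strictly increasing for $c>\sfrac{1}{2e}$ (and $c_1>\sfrac14>\sfrac{1}{2e}$), its minimum over $[c_1,\sfrac12]$ is $c_1\log(2c_1)>\sfrac14\log\sfrac12=-\sfrac14\log2$, so $\sfrac14\log2+c_1\log(2c_1)>0$; taking any $k$ strictly inside this positive interval gives an exponentially small pointwise probability, uniform in $c$. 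The promotion to a uniform bound over $c\in[c_1,\sfrac12]$ is then a standard chaining argument: cover $[c_1,\sfrac12]$ by a polynomially dense grid, apply the union bound, and interpolate using the Lipschitz continuity of $c\mapsto K_{c,n}^{-1}J_n(L,cn)$, whose derivative is controlled by an analogous second moment estimate involving $G(c'n+1,\cdot)$ in place of $G(c'n,\cdot)$. The main obstacle is executing the variance bound cleanly: the factor $(1-4c')^{-2}$ makes explicit that the hypothesis $c_1>\sfrac14$ is essential rather than merely technical, since as $c'\to\sfrac14$ the cross-integrals cease to converge and the argument collapses.
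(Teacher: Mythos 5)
Your variance computation is correct, and it is a genuinely different route from the paper's: the paper never computes moments of $J_n$, but instead writes $J_n(L,cn)=\int_0^\infty G\big(\frac n2-cn+1,x\big)R_n\big(\frac{\omega_n}{n}(\frac x\pi)^{\frac n2}\big)\,dx$ and feeds in the pointwise bound $|R_n(V)|\ll\min(V,V^{\frac12+\delta})$, valid for \emph{all} $V$ simultaneously on a single event of probability $\geq1-\ve$ supplied by Theorem \ref{Rnthm} and \cite{poisson}; the resulting estimate is deterministic and therefore uniform in $c$ for free. Your Rogers-based bound $\text{Var}\big(J_n(\cdot,cn)\big)\ll_{c_1}2^{-n/2}n^{-2}$ is correct (and in fact sharp), and together with Chebyshev and the asymptotics of $K_{c,n}$ it does prove the proposition for each \emph{fixed} $c\in[c_1,\frac12]$.

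The gap is in the promotion to $\sup_{c\in[c_1,\frac12]}$. First, the grid cannot be polynomially dense: the threshold $K_{c,n}e^{-kn}\asymp n^{-\frac12-c}(2c)^{cn}e^{-kn}$ is exponentially small, while already the deterministic summand $-\frac1{cn}$ of $J_n(L,cn)$ (equivalently, the drift of the mean of $\sum'G(c'n,\pi|\vecm|^2)$) moves by $\asymp\delta/n$ over an interval of length $\delta$; so the spacing must be $\lesssim e^{-(\beta+k)n}$ with $\beta:=-c_1\log(2c_1)$, i.e.\ you need $\gtrsim e^{(\beta+k)n}$ grid points. (Any a priori high-probability bound on the Lipschitz constant of $c\mapsto J_n(L,cn)$ obtained from Markov plus the monotonicity of $G$ in its first argument is only polynomial in $n$, which does not help; obtaining an exponentially small \emph{uniform} bound on $\partial_cJ_n$ is the same problem you started with, so the "analogous second moment estimate for the derivative" is circular. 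Also, $\partial_c$ produces $\int_1^\infty t^{c'n-1}(\log t)e^{-xt}\,dt$, not $G(c'n+1,x)$, which is the $x$-derivative.) Second, the union bound over that many points fails near $c=\frac14$: your pointwise Chebyshev bound decays like $e^{-2(\kappa(c_1)-k)n}$ with $\kappa(c_1)=\frac14\log2+c_1\log(2c_1)$, and $\kappa(c_1)\to0$ as $c_1\to\frac14+$; since the variance really is of order $2^{-n/2}n^{-2}$, this rate cannot be improved at the second-moment level. The union bound then requires $2\big(\kappa(c_1)-k\big)>\beta+k$, which amounts to $3c_1\log(2c_1)>-\frac12\log2$ and fails for $c_1\lesssim0.366$. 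So the argument as written proves the proposition only for $c_1$ bounded away from $\frac14$, not for all $c_1\in(\frac14,\frac12)$. To repair it within your framework you would need high moments of $J_n$ (Rogers' formula for $2p$ points with $p$ large depending on $c_1$) to sharpen the pointwise tail, or else to replace the union bound by a single high-probability event on which the estimate becomes deterministic --- which is exactly the role Theorem \ref{Rnthm} plays in the paper.
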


\begin{proof}
It follows from \eqref{Hint} and integration by parts, together with the estimates in Lemma \ref{sstbound} and the bound $G(s,x)\ll x^{-1}e^{-x}$ for $0\leq s\leq 1$, that
\begin{align*}
J_n(L,cn)&=\int_0^{\infty}G\Big(\frac n2-cn,\pi\Big(\frac{nV}{\omega_n}\Big)^{\frac{2}{n}}\Big)\,dR_n(V)\\
&=\frac{2\pi}{n}\Big(\frac{n}{\omega_n}\Big)^{\frac2n}\int_{0}^{\infty}G\Big(\frac n2-cn+1,\pi\Big(\frac{nV}{\omega_n}\Big)^{\frac{2}{n}}\Big)V^{\frac2n-1}R_n(V)\,dV\nonumber
\end{align*}
(cf.\ \eqref{intbyparts}). Furthermore, changing variables $V=\frac{\omega_n}{n}(\frac{x}{\pi})^{n/2}$ we obtain
\begin{align}\label{Jint}
J_n(L,cn)=\int_{0}^{\infty}G\Big(\frac n2-cn+1,x\Big)R_n\Big(\frac{\omega_n}{n}\Big(\frac{x}{\pi}\Big)^{\frac n2}\Big)\,dx.
\end{align}
Given $\ve,\delta\in(0,\frac12)$, it follows from \cite{poisson} and Theorem \ref{Rnthm} that there exist $n_0\in\Z_{\geq3}$, $M\in\Z_{\geq1}$ and sets $X_n''\subset X_n$ with $\mu_n(X_n'')>1-\ve$ such that for all $n\geq n_0$ and $L\in X_n''$ we have $N_n(V)=0$ for all $V\in[0,M^{-1}]$, $N_n(10)<M$, and $|R_n(V)|\ll_{\ve,\delta} V^{\frac12+\delta}$ for all $V\geq10$. It follows that, for all $n\geq n_0$ and $L\in X_n''$, we have 
\begin{align*}
|R_n(V)|\ll_{\ve,\delta}\min\Bigl(V,V^{\frac12+\delta}\Bigr),\qquad\forall V\geq0.
\end{align*}
We now estimate $J_n(L,cn)$ for all $c\in[\frac14,\frac12]$ and $L\in X_n''$ ($n\geq n_0$) by splitting the integral in \eqref{Jint} into two parts. More precisely, for $n\geq n_0$ and $L\in X_n''$, we have
\begin{multline}\label{Jint2}
|J_n(L,cn)|\ll\int_{0}^{W_n}G\Big(\frac n2-cn+1,x\Big)\frac{\omega_n}{n}\Big(\frac{x}{\pi}\Big)^{\frac n2}\,dx\\
+\int_{W_n}^{\infty}G\Big(\frac n2-cn+1,x\Big)\bigg(\frac{\omega_n}{n}\Big(\frac{x}{\pi}\Big)^{\frac n2}\bigg)^{\frac12+\delta}\,dx,
\end{multline}
where $W_n=\pi\big(\frac{n}{\omega_n}\big)^{\frac{2}{n}}$. In \eqref{Jint2} and in all other "$\ll$" bounds below, the implied constant may depend on $\ve,\delta$, but is independent of $n,L,c$ (subject to $c\in[\frac14,\frac12]$). Recall here that $W_n\sim\frac{n}{2e}$ as $n\to\infty$. We call the integrals in \eqref{Jint2} $I_1$ and $I_2$ respectively.

To begin with we set $T_n=\min(\frac n2-cn+1,W_n)$ and use Lemma \ref{sstbound} to get
\begin{multline}\label{JI1}
I_1\ll\frac{\omega_n}{n}\pi^{-\frac n2}\big(\sfrac{n}{2}-cn+1\big)^{(\frac 12-c)n+\frac12}e^{(c-\frac12)n}\int_0^{T_n}x^{cn-1}\,dx\\
+\frac{\omega_n}{n}\pi^{-\frac n2}\big(\sfrac{n}{2}-cn+1\big)^{-\frac12}\int_{T_n}^{W_n}x^{\frac n2}e^{-x}\,dx.
\end{multline} 
When $T_n=W_n$ the second integral in \eqref{JI1} vanishes and we have
\begin{align}\label{finalI11}
I_1&\ll \frac{\omega_n}{n^2}\pi^{-\frac n2}\big(\sfrac{n}{2}-cn+1\big)^{(\frac 12-c)n+\frac12}e^{(c-\frac12)n}W_n^{cn}\\
&\ll n^{-1}\Big(\frac{\omega_n}{n}\Big)^{1-2c}(\pi e)^{(c-\frac12)n}\big(\sfrac{n}{2}-cn+1\big)^{(\frac 12-c)n+\frac12}.\nonumber
\end{align}
On the other hand, when $T_n=\frac n2-cn+1$ we have
\begin{align}\label{intermedI1}
I_1&\ll\frac{\omega_n}{n^2}\pi^{-\frac n2}e^{(c-\frac12)n}\big(\sfrac{n}{2}-cn+1\big)^{\frac n2+\frac12}+\frac{\omega_n}{n}\pi^{-\frac n2}\big(\sfrac{n}{2}-cn+1\big)^{-\frac12}\int_{0}^{W_n}x^{\frac n2}e^{-x}\,dx.
\end{align}
One checks that $x\mapsto x^{\frac n2}e^{-x}$ is increasing for all $0<x<\frac n2$. In addition $W_n<\frac n2$ for all large enough $n$. Hence after possibly increasing $n_0$, we have that for all $n\geq n_0$, \eqref{intermedI1} is
\begin{align}\label{finalI12}
&\ll\frac{\omega_n}{n^2}\pi^{-\frac n2}e^{(c-\frac12)n}\big(\sfrac{n}{2}-cn+1\big)^{\frac n2+\frac12}+\frac{\omega_n}{n}\pi^{-\frac n2}\big(\sfrac{n}{2}-cn+1\big)^{-\frac12}W_n^{\frac n2+1}e^{-W_n}\\
&\ll\frac{\omega_n}{n^2}\pi^{-\frac n2}e^{(c-\frac12)n}\big(\sfrac{n}{2}-cn+1\big)^{\frac n2+\frac12}+\omega_n^{-\frac 2n}\big(\sfrac{n}{2}-cn+1\big)^{-\frac12}e^{-W_n}.\nonumber
\end{align}

Next we estimate the integral $I_2$. We set $S_n=\max(\frac n2-cn+1,W_n)$ and use Lemma \ref{sstbound} to get
\begin{multline}\label{JI2}
I_2\ll\Big(\frac{\omega_n}{n}\Big)^{\frac12+\delta}\pi^{-(\frac 14+\frac{\delta}{2})n}\big(\sfrac{n}{2}-cn+1\big)^{(\frac 12-c)n+\frac12}e^{(c-\frac12)n}\int_{W_n}^{S_n}x^{(c-\frac14+\frac{\delta}{2})n-1}\,dx\\
+\Big(\frac{\omega_n}{n}\Big)^{\frac12+\delta}\pi^{-(\frac 14+\frac{\delta}{2})n}\big(\sfrac{n}{2}-cn+1\big)^{-\frac12}\int_{S_n}^{\infty}x^{(\frac 14+\frac{\delta}{2})n}e^{-x}\,dx.
\end{multline} 
When $S_n=W_n$ the first integral in \eqref{JI2} vanishes and we obtain, estimating the function $g(x)=x^{(\frac 14+\frac{\delta}{2})n+3}e^{-x}$ with its maximum, 
\begin{align}\label{finalI21}
I_2&\ll\Big(\frac{\omega_n}{n}\Big)^{\frac12+\delta}\pi^{-(\frac 14+\frac{\delta}{2})n}\big(\sfrac{n}{2}-cn+1\big)^{-\frac12}\int_{W_n}^{\infty}x^{(\frac 14+\frac{\delta}{2})n+3}e^{-x}\,\frac{dx}{x^3}\\
&\ll\Big(\frac{\omega_n}{n}\Big)^{\frac12+\delta}W_n^{-2}\pi^{-(\frac 14+\frac{\delta}{2})n}\big(\sfrac{n}{2}-cn+1\big)^{-\frac12}\Big(\big(\sfrac{1}{4}+\sfrac{\delta}{2}\big)n+3\Big)^{(\frac 14+\frac{\delta}{2})n+3}e^{-(\frac14+\frac{\delta}{2})n}\nonumber\\
&\ll\Big(\frac{\omega_n}{n}\Big)^{\frac12+\delta+\frac4n}\pi^{-(\frac 14+\frac{\delta}{2})n}\big(\sfrac{n}{2}-cn+1\big)^{-\frac12}\Big(\big(\sfrac{1}{4}+\sfrac{\delta}{2}\big)n+3\Big)^{(\frac 14+\frac{\delta}{2})n+3}e^{-(\frac14+\frac{\delta}{2})n}.\nonumber
\end{align}
In the remaining case, that is $S_n=\frac n2-cn+1$, we have
\begin{align}\label{finalI22}
I_2&\ll\Big(\frac{\omega_n}{n}\Big)^{\frac12+\delta}\pi^{-(\frac 14+\frac{\delta}{2})n}\big(\sfrac{n}{2}-cn+1\big)^{(\frac 12-c)n+\frac12}e^{(c-\frac12)n}\int_{0}^{\frac n2-cn+1}x^{(c-\frac14+\frac{\delta}{2})n-1}\,dx\\
&+\Big(\frac{\omega_n}{n}\Big)^{\frac12+\delta}\pi^{-(\frac 14+\frac{\delta}{2})n}\big(\sfrac{n}{2}-cn+1\big)^{-\frac52}\Big(\big(\sfrac{1}{4}+\sfrac{\delta}{2}\big)n+3\Big)^{(\frac 14+\frac{\delta}{2})n+3}e^{-(\frac14+\frac{\delta}{2})n}\nonumber\\
&\ll n^{-1}\Big(\frac{\omega_n}{n}\Big)^{\frac12+\delta}\pi^{-(\frac 14+\frac{\delta}{2})n}e^{(c-\frac12)n}\big(\sfrac{n}{2}-cn+1\big)^{(\frac 14+\frac{\delta}{2})n+\frac12}\nonumber\\
&+\Big(\frac{\omega_n}{n}\Big)^{\frac12+\delta}\pi^{-(\frac 14+\frac{\delta}{2})n}\big(\sfrac{n}{2}-cn+1\big)^{-\frac52}\Big(\big(\sfrac{1}{4}+\sfrac{\delta}{2}\big)n+3\Big)^{(\frac 14+\frac{\delta}{2})n+3}e^{-(\frac14+\frac{\delta}{2})n}.\nonumber
\end{align}
(Recall that the implied constant is allowed to depend on $\delta$.) 

Collecting the results in \eqref{finalI11}, \eqref{finalI12}, \eqref{finalI21} and \eqref{finalI22} we get, for  all $n\geq n_0$, $L\in X_n''$ and $c\in[\frac14,\frac12]$,
\begin{align}\label{klar1}
|J_n(L,cn)|&\ll n^{-1}\Big(\frac{\omega_n}{n}\Big)^{1-2c}(\pi e)^{(c-\frac12)n}\big(\sfrac{n}{2}-cn+1\big)^{(\frac 12-c)n+\frac12}\\
&+n^{-1}\Big(\frac{\omega_n}{n}\Big)^{\frac12+\delta}\pi^{-(\frac 14+\frac{\delta}{2})n}e^{(c-\frac12)n}\big(\sfrac{n}{2}-cn+1\big)^{(\frac 14+\frac{\delta}{2})n+\frac12}\nonumber\\
&+\Big(\frac{\omega_n}{n}\Big)^{\frac12+\delta}\pi^{-(\frac 14+\frac{\delta}{2})n}\big(\sfrac{n}{2}-cn+1\big)^{-\frac52}\Big(\big(\sfrac{1}{4}+\sfrac{\delta}{2}\big)n+3\Big)^{(\frac 14+\frac{\delta}{2})n+3}e^{-(\frac14+\frac{\delta}{2})n}\nonumber
\end{align}
when $W_n\leq \frac n2-cn+1$, and
\begin{align}\label{klar2}
|J_n(L,cn)|&\ll\frac{\omega_n}{n^2}\pi^{-\frac n2}e^{(c-\frac12)n}\big(\sfrac{n}{2}-cn+1\big)^{\frac n2+\frac12}+\omega_n^{-\frac 2n}\big(\sfrac{n}{2}-cn+1\big)^{-\frac12}e^{-W_n}\\
&+\Big(\frac{\omega_n}{n}\Big)^{\frac12+\delta+\frac4n}\pi^{-(\frac 14+\frac{\delta}{2})n}\big(\sfrac{n}{2}-cn+1\big)^{-\frac12}\Big(\big(\sfrac{1}{4}+\sfrac{\delta}{2}\big)n+3\Big)^{(\frac 14+\frac{\delta}{2})n+3}e^{-(\frac14+\frac{\delta}{2})n}\nonumber
\end{align}
when $\frac n2-cn+1\leq W_n$.

It now remains to prove that all terms in \eqref{klar1} and \eqref{klar2} are as small as the proposition claims. We will prove that there exists a constant $k>0$ such that if $\delta$ has been fixed to be sufficiently small (as depends only on $c_1$), then for all sufficiently large $n$ we have
$K_{c,n}^{-1}|J_n(L,cn)|\ll n^{4}e^{-2kn}$ for all $c\in[c_1,\frac12]$ and $L\in X_n''$. Hence, a fortiori,
$K_{c,n}^{-1}|J_n(L,cn)|<e^{-kn}$ for $n$ large enough, and this completes the proof.

We first consider \eqref{klar1}. Using Stirling's formula  and \eqref{omegan} we get
\begin{multline*}
K_{c,n}^{-1}n^{-1}\Big(\frac{\omega_n}{n}\Big)^{1-2c}(\pi e)^{(c-\frac12)n}\big(\sfrac{n}{2}-cn+1\big)^{(\frac 12-c)n+\frac12}\\
\ll n^{2c-\frac12}\big(\sfrac{1}{2}-c+\sfrac{1}{n}\big)^{\frac12}\text{exp}\big(-f_n(c)\cdot n\big),
\end{multline*}
where
\begin{align*}
f_n(c)=c\log c+\big(2c-\sfrac12\big)\log2+\big(c-\sfrac12\big)\log\big(\sfrac12-c+\sfrac1n\big).
\end{align*}
Using $W_n\sim\frac n{2e}$ and $\frac12-\frac1{2e}=0.316...$ we find that for $n$ sufficiently large the assumption $W_n\leq \frac n2-cn+1$ implies $c\leq0.32$. Moreover, for all $c\in[\frac14,0.32]$ we have
\begin{multline}\label{fncomputation}
f_n'(c)=\log c+2+2\log 2+\log\big(\sfrac12-c+\sfrac1n\big)-\big(\sfrac n2-cn+1\big)^{-1}\\
\geq 2+\log\big(\sfrac12-0.32\big)-\big(\sfrac n2-0.32n\big)^{-1},
\end{multline}
which is positive for $n$ sufficiently large. Hence for $n$ sufficiently large and for all $c\in[c_1,\frac12]$ satisfying $W_n\leq\frac n2-cn+1$, we have (writing $f_\infty(c):=\lim_{n\to\infty} f_n(c)$ and noticing that the computation in \eqref{fncomputation} also proves $f'_\infty>0$ for $c\in[\frac14,0.32]$): 
\begin{align*}
f_n(c)\geq f_n(c_1)>\sfrac12 f_\infty(c_1)>0.
\end{align*}
Hence the first term in \eqref{klar1} is small enough. Continuing, we find that
\begin{multline*}
K_{c,n}^{-1}n^{-1}\Big(\frac{\omega_n}{n}\Big)^{\frac12+\delta}\pi^{-(\frac 14+\frac{\delta}{2})n}e^{(c-\frac12)n}\big(\sfrac{n}{2}-cn+1\big)^{(\frac 14+\frac{\delta}{2})n+\frac12}\\
\ll n^{c-\frac{\delta}{2}-\frac14}\big(\sfrac{1}{2}-c+\sfrac{1}{n}\big)^{\frac12}\text{exp}\big(-g_n(c)\cdot n\big),
\end{multline*}
where
\begin{align*}
g_n(c)=c\log c+\big(c-\sfrac14-\sfrac{\delta}{2}\big)\log2+\big(\sfrac14-\sfrac{\delta}{2}-c\big)-\big(\sfrac14+\sfrac{\delta}{2}\big)\log\big(\sfrac12-c+\sfrac1n\big).
\end{align*}
Here $g'_n(c)>\log(2c)+(2-4c+\frac4n)^{-1}>0$ for all $n\geq10$ and $c\in[\frac14,\frac12]$; hence $g_n(c)\geq g_n(c_1)$ for all $c\in[c_1,\frac12]$. Thus, since for all sufficiently large $n$ and small $\delta$ we have that $g_n(c_1)$ is larger than a positive constant which only depends on $c_1$, the second term in \eqref{klar1} is small enough. Next we note that
\begin{multline}\label{decay}
K_{c,n}^{-1}\Big(\frac{\omega_n}{n}\Big)^{\frac12+\delta}\pi^{-(\frac 14+\frac{\delta}{2})n}\big(\sfrac{n}{2}-cn+1\big)^{-\frac52}\Big(\big(\sfrac{1}{4}+\sfrac{\delta}{2}\big)n+3\Big)^{(\frac 14+\frac{\delta}{2})n+3}e^{-(\frac14+\frac{\delta}{2})n}\\
\ll n^{c-\frac{\delta}{2}+\frac34}\big(\sfrac{1}{2}-c+\sfrac{1}{n}\big)^{-\frac52}\text{exp}\big(-h_n(c)\cdot n\big),
\end{multline}
where
\begin{align*}
h_n(c)=c\log c+\big(c-\sfrac14-\sfrac{\delta}{2}\big)\log2-\big(\sfrac14+\sfrac{\delta}{2}\big)\log\big(\sfrac14+\sfrac{\delta}{2}+\sfrac3n\big).
\end{align*}
Now $h'_n(c)\geq 1-\log2>0$ for all $c\geq\frac14$, independently of $n$ and $\delta$, and thus $h_n(c)\geq h_n(c_1)$ for all $c\in[c_1,\frac12]$; also for all sufficiently large $n$ and small $\delta$ we have that $h_n(c_1)$ is larger than a positive constant which only depends on $c_1$. Thus the third term in \eqref{klar1} is small enough.

We now give a similar treatment of the terms in \eqref{klar2}. First we observe that
\begin{align*}
K_{c,n}^{-1}\frac{\omega_n}{n^2}\pi^{-\frac n2}e^{(c-\frac12)n}\big(\sfrac{n}{2}-cn+1\big)^{\frac n2+\frac12}
\ll n^{c-\frac12}\big(\sfrac{1}{2}-c+\sfrac{1}{n}\big)^{\frac12}\text{exp}\big(-j_n(c)\cdot n\big),
\end{align*}
where
\begin{align*}
j_n(c)=c\log c+\big(c-\sfrac12\big)\log2-c-\sfrac12\log\big(\sfrac12-c+\sfrac1n\big).
\end{align*}
Note that $j'_n(c)=\log(2c)+(1-2c+\frac2n)^{-1}>0$ for all $c\in[\frac14,\frac12]$ and all $n\geq3$. Furthermore, using $\frac12-\frac1{2e}=0.316...$, it follows that for $n$ sufficiently large the assumption $W_n\geq \frac n2-cn+1$ implies $c\geq0.3$. Hence $j_n(c)\geq j_n(0.3)$, and for all $n\geq 1000$ we have $j_n(0.3)\geq j_{1000}(0.3)=0.00240...>0$. Hence the first term in \eqref{klar2} is as small as desired. Next we note that, for all sufficiently large $n$ such that $W_n>(\frac1{2e}-\delta)n$, we have
\begin{align*}
K_{c,n}^{-1}\omega_n^{-\frac2n}\big(\sfrac n2-cn+1\big)^{-\frac12}e^{-W_n}
\ll K_{c,n}^{-1}n^{\frac12}\big(\sfrac12-c+\sfrac1n\big)^{-\frac12}e^{-(\frac1{2e}-\delta)n}.
\end{align*}
Hence it follows from Remark \ref{Kremark} that also the second term in \eqref{klar2} is as small as desired. Finally, since the third term in \eqref{klar2} differs from the the third term in \eqref{klar1} only by a factor of polynomial size in $n$, the treatments of these terms are almost identical. Note in particular that the exponential decay in \eqref{decay} is uniform for $c\in[c_1,\frac12]$. This concludes the proof of the proposition.
\end{proof}

\begin{remark}\label{dualJ}
Recall from \eqref{Fn3} that we are interested in $J_n(L^*,cn)$. Since the measure $\mu_n$ is invariant under the homeomorphism $L\mapsto L^*$ of $X_n$ onto itself, we have the following consequence of Proposition \ref{JAprop}: Given any $c_1\in(\frac{1}{4},\frac12)$ there exists a constant $k>0$ such that 
\begin{align*}
\text{Prob}_{\mu_n}\Big\{L\in X_n\,\Big|\,\,K_{c,n}^{-1}\big|J_n(L^*,cn)\big|<e^{-kn},\,\forall c\in[c_1,\sfrac12]\Big\}\to1
\end{align*}
as $n\to\infty$.
\end{remark}

We collect the results of this section in the following theorem. 

\begin{thm}\label{EPSTEIN}
Let $c_1\in(\frac14,\frac12)$. Then for all $\ve>0$ there exists $A_0>0$ such that for all $A\geq A_0$ there exists $n_0\in\Z_{\geq3}$ such that for all $n\geq n_0$ we have 
\begin{align*}
\text{Prob}_{\mu_n}\bigg\{L\in X_n\,\Big|\,\sup_{c\in[c_1,\frac12)}\bigg|V_n^{-2c}E_n(L,cn)-\int_0^AV^{-2c}\,dR_n(V)\bigg|\leq\ve\bigg\}\geq1-\ve.
\end{align*}
\end{thm}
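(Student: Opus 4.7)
The plan is to assemble the ingredients already established in Section~\ref{Epsteinsection}. First, combining the definition \eqref{Fn} with the decomposition \eqref{Fn3} and the notation $J_n(L,s) := H_n(L,\frac n2 - s)$, I would write
\[
E_n(L,cn) = \pi^{cn}\Gamma(cn)^{-1}\bigl(H_n(L,cn) + J_n(L^*,cn)\bigr).
\]
Since $V_n = \omega_n/n$, a direct computation with the definition of $K_{c,n}$ gives $V_n^{-2c}\pi^{cn}\Gamma(cn)^{-1} = K_{c,n}^{-1}$, so that
\[
V_n^{-2c}E_n(L,cn) = K_{c,n}^{-1}H_n(L,cn) + K_{c,n}^{-1}J_n(L^*,cn).
\]
This reduces the theorem to controlling the two summands separately and applying the triangle inequality.

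Given $\ve > 0$, the first summand is handled by \eqref{important1} (applied with $\ve/2$ in place of $\ve$): this produces $A_0 > 0$ such that, for every $A \geq A_0$, there exists $n_0 \in \Z_{\geq 3}$ for which the event
\[
E_1(n,A) := \Bigl\{L \in X_n : \sup_{c\in[c_1,\frac12)}\bigl|K_{c,n}^{-1}H_n(L,cn) - \textstyle\int_0^A V^{-2c}\,dR_n(V)\bigr| \leq \ve/2\Bigr\}
\]
has $\mu_n$-measure at least $1 - \ve/2$ for all $n \geq n_0$. For the second summand I would invoke Remark~\ref{dualJ}, which furnishes $k > 0$ such that (possibly enlarging $n_0$) the event
\[
E_2(n) := \Bigl\{L \in X_n : K_{c,n}^{-1}|J_n(L^*,cn)| < e^{-kn}\ \text{for all}\ c\in[c_1,\tfrac12]\Bigr\}
\]
has $\mu_n$-measure at least $1 - \ve/2$ for all $n \geq n_0$.

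Finally, on the intersection $E_1(n,A) \cap E_2(n)$ the triangle inequality yields
\[
\sup_{c\in[c_1,\frac12)}\Bigl|V_n^{-2c}E_n(L,cn) - \int_0^A V^{-2c}\,dR_n(V)\Bigr| \leq \ve/2 + e^{-kn},
\]
which is bounded by $\ve$ once $n_0$ is taken large enough to ensure $e^{-kn} \leq \ve/2$. Subadditivity of $\mu_n$ then gives $\mu_n(E_1(n,A)\cap E_2(n)) \geq 1 - \ve$, proving the theorem. There is no real obstacle at this stage: all of the analytic effort has already been invested in Proposition~\ref{HAprop}, Lemma~\ref{Htail} and Proposition~\ref{JAprop}, and only a clean combination of their conclusions is required.
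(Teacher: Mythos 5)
Your proposal is correct and follows exactly the route of the paper's own (very brief) proof: the identity $K_{c,n}^{-1}F_n(L,cn)=V_n^{-2c}E_n(L,cn)$ together with the decomposition \eqref{Fn3}, then \eqref{important1} (i.e.\ Proposition \ref{HAprop} plus Lemma \ref{Htail}) for the $H_n$-term and Remark \ref{dualJ} for the $J_n(L^*,\cdot)$-term, combined by the triangle inequality and a union bound. The only difference is that you spell out the $\ve/2$-bookkeeping that the paper leaves implicit.
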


\begin{proof}
Recall that $V_n=\frac{\omega_n}{n}$. Since
\begin{align*}
K_{c,n}^{-1}F_n(L,cn)=V_n^{-2c}E_n(L,cn)
\end{align*}
the theorem follows from Proposition \ref{HAprop}, Lemma \ref{Htail} (cf.\ \eqref{important1}) and Remark \ref{dualJ}.
\end{proof}

\section{Proof of Theorem \ref{curvethm}}

Theorem \ref{EPSTEIN} says that for $c\in[c_1,\frac12)$ the random variable $\int_0^AV^{-2c}\,dR_n(V)$ is, with large probability, uniformly close to the (normalized) Epstein zeta function provided that $A$ and $n$ are appropriately large. We now show that this random variable is close in distribution to the corresponding truncation of $H(c)$. 

\begin{lem}\label{speciallemma}
Let $\frac14<c_1<c_2<\frac12$ and $A>0$ be fixed. Then the $C\big([c_1,c_2]\big)$-valued random function 
\begin{align*}
c\mapsto \int_0^AV^{-2c}\,dR_n(V)
\end{align*}
converges in distribution to the random function 
\begin{align*}
c\mapsto \int_0^AV^{-2c}\,dR(V)
\end{align*}
as $n\to\infty$.
\end{lem}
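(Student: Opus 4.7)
The key observation is that $dN_n$ is purely atomic with atoms of weight $2$ at the volumes $\mathcal V_j$, so the integral reduces to the finite sum
$$\int_0^A V^{-2c}\, dR_n(V) = 2\sum_{\mathcal V_j \leq A}\mathcal V_j^{-2c} - \frac{A^{1-2c}}{1-2c},$$
with the analogous formula in terms of the Poisson points $T_j$ for the limit (cf.\ \eqref{FA}). Thus the lemma reduces to a continuous mapping theorem statement applied to the distributional convergence $\{\mathcal V_j\}_{j=1}^{\infty} \to \{T_j\}_{j=1}^{\infty}$ from \cite{poisson}.

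For each $M \in \Z_{\geq 1}$, I would define a truncation map $\Phi_M : \Omega \to C([c_1, c_2])$ by
$$\Phi_M(\vecx)(c) := 2\sum_{\substack{1 \leq j \leq M \\ x_j \leq A}} x_j^{-2c} - \frac{A^{1-2c}}{1-2c}.$$
This map factors through the projection onto $(x_1, \ldots, x_M)$ and is continuous at every $\vecx$ for which no $x_j$ with $j \leq M$ equals $A$; indeed, near such a point the index set $\{j \leq M : x_j \leq A\}$ is locally constant, and $(y, c) \mapsto y^{-2c}$ is jointly continuous on $\R_{>0} \times [c_1, c_2]$. Since $\textbf{P}(T_j = A) = 0$ for every $j$, the discontinuity set is $\textbf{P}$-null. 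The convergence $\{\mathcal V_j\} \to \{T_j\}$ in $\Omega$ implies convergence of finite-dimensional marginals, so the continuous mapping theorem gives
$$\Phi_M(\{\mathcal V_j\}) \longrightarrow \Phi_M(\{T_j\}) \quad \text{in distribution in } C([c_1, c_2])$$
for each fixed $M$.

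Finally I would let $M \to \infty$. The event $\Phi_M(\{\mathcal V_j\}) = \int_0^A V^{-2c}\, dR_n(V)$ (as elements of $C([c_1,c_2])$) holds precisely when $N_n(A) \leq 2M$; by Siegel's mean value formula $\mathbb E_{\mu_n} N_n(A) = A$, and Lemma \ref{Rnlem1} with $\Delta = A$ yields $\text{Var}_{\mu_n}(N_n(A)) < 5A$, so Chebyshev produces
$$\text{Prob}_{\mu_n}\bigl(N_n(A) > 2M\bigr) \leq \frac{5A}{(2M - A)^2} \qquad (2M > A),$$
which tends to $0$ as $M \to \infty$ uniformly in $n$. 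An analogous (and easier) bound holds for the Poisson process. A standard three-term triangle argument (cf.\ \cite[Thm.\ 3.2]{billconv}) then gives $\mathbb E F\bigl(\int_0^A V^{-2c}\, dR_n(V)\bigr) \to \mathbb E F\bigl(\int_0^A V^{-2c}\, dR(V)\bigr)$ for every bounded continuous $F$ on $C([c_1, c_2])$, which is the claimed convergence. The main technical point is verifying continuity of $\Phi_M$ as a map into the function space $C([c_1, c_2])$ rather than at a single value of $c$, but the uniform continuity of $(y, c) \mapsto y^{-2c}$ on compact subsets of $\R_{>0} \times [c_1, c_2]$ makes this routine.
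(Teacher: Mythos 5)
Your proof is correct and follows the same route as the paper: reduce the integral to the finite sum $2\sum_{\mathcal V_j\leq A}\mathcal V_j^{-2c}-\frac{A^{1-2c}}{1-2c}$ and apply the continuous mapping theorem to the convergence $\{\mathcal V_j\}\Rightarrow\{T_j\}$ from \cite[Thm.\ 1$'$]{poisson}. The only difference is that your truncation at level $M$, with the ensuing Chebyshev bound from Lemma \ref{Rnlem1} and the converging-together step, is superfluous: the paper applies the mapping theorem directly to the untruncated map $f_A$, which is already continuous at $\mathbf P$-almost every point of $\Omega$ because on the interior of each set $\Omega^{(j)}=\{\vecx\in\Omega\mid x_j\leq A<x_{j+1}\}$ the sum has a fixed finite number of terms.
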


\begin{proof}
Expressed in more explicit terms, recalling the definitions of $R_n(V)$ and $R(V)$ (see \eqref{Rvdef} and \eqref{Rnvdef}), we need to prove that the random function $$c\mapsto 2\sum_{\mathcal V_j\leq A}\mathcal V_j^{-2c}-\frac{A^{1-2c}}{1-2c}$$ converges in distribution to $$c\mapsto 2\sum_{T_j\leq A}T_j^{-2c}-\frac{A^{1-2c}}{1-2c}$$ as $n\to\infty$. Note that the function $f_A$ defined in \eqref{FA}, considered as a function from $\Omega\setminus\Omega^{(\infty)}$ into $C\big([c_1,c_2]\big)$, is continuous on the open set $\cup_{j=0}^{\infty}(\Omega^{(j)})^{\circ}$ (cf.\ \eqref{mangd1}, \eqref{mangd2}), which has full ($\textbf P$-)measure in $\Omega$. Now the lemma follows from \cite[Thm.\ 1$'$]{poisson} and \cite[Thm.\ 2.7]{billconv}.
\end{proof}

We let $\mathcal P\big(C\big([c_1,c_2]\big)\big)$ denote the set of Borel probability measures on $C\big([c_1,c_2]\big)$. We recall that for $P,Q\in\mathcal P\big(C\big([c_1,c_2]\big)\big)$ the L\'{e}vy-Prohorov distance $\pi(P,Q)$ between $P$ and $Q$ is defined as 
\begin{align}\label{LP}
 \pi(P,Q):=\inf\Big\{\ve>0\,\big|\,P(B)\leq Q(B^{\ve})+\ve\,\text{ for all Borel sets $B\subseteq C\big([c_1,c_2]\big)$}\Big\}\,,
\end{align}
where $B^{\ve}$ is the open $\ve$-neighbourhood of $B$ in $C\big([c_1,c_2]\big)$ (cf.\ \cite{billconv}). Since $C\big([c_1,c_2]\big)$ is separable, it is known that convergence in the metric $\pi$ is equivalent to weak convergence in $\mathcal P\big(C\big([c_1,c_2]\big)\big)$. 

\begin{proof}[Proof of Theorem \ref{curvethm}]\label{curveproof}
 Let $\ve>0$ be given and let $\mu_{E_n}$, $\mu_{E_{n,A}}$, $\mu_{H_A}$ and $\mu_{H}$ be the distributions of the $C\big([c_1,c_2]\big)$-valued random functions $c\mapsto V_n^{-2c}E_n(\cdot,cn)$, $c\mapsto \int_0^AV^{-2c}\,dR_n(V)$, $c\mapsto \int_0^AV^{-2c}\,dR(V)$ and $c\mapsto H(c)$, respectively. Let further $A>0$ and $n_0\in\Z_{\geq3}$ be large enough for Theorem \ref{EPSTEIN}, Lemma \ref{speciallemma} and Lemma \ref{intest} to guarantee that $\pi(\mu_{E_n},\mu_{E_{n,A}})\leq\ve$, $\pi(\mu_{E_{n,A}},\mu_{H_A})\leq\ve$ and $\pi(\mu_{H_A},\mu_{H})\leq\ve$ hold for all $n\geq n_0$. It follows from the triangle inequality that $\pi(\mu_{E_n},\mu_{H})\leq3\ve$ for all $n\geq n_0$. We conclude that $\mu_{E_n}$ converges (in the metric $\pi$) to $\mu_{H}$ as $n\to\infty$ and the theorem follows.
 \end{proof}
 
\begin{remark}\label{cancellationremark}
We note that our claim in \eqref{EXPCANCELLATION} about exponential cancellation in $H_n(L,cn)$ follows easily from \eqref{important1} and Lemma \ref{speciallemma}. Indeed, given $\ve>0$ we choose $A>0$ and $n_0\in\Z_{\geq3}$ such that \eqref{important1} holds for all $n\geq n_0$, and using Lemma \ref{speciallemma} we see that there exists some $M>0$ and $n_0'\in\Z_{>0}$ such that for all $n\geq n_0'$ we have $\bigl|\int_0^A V^{-2c}\,dR_n(V)\bigr|<M$ for our fixed $c\in(\frac14,\frac12)$, with ($\mu_n$-)probability $\geq1-\ve$. It follows that 
\begin{align*}
\text{Prob}_{\mu_n}\Bigl\{L\in X_n\:\Big|\:\bigl|H_n(L,cn)\bigr|<(M+\ve)K_{c,n}\Bigr\}\geq1-2\ve
\end{align*}
for all $n\geq\max(n_0,n_0')$. But $K_{c,n}\ll(2c)^{cn}n^{-(c+\frac12)}$ as $n\to\infty$, and thus for all sufficiently large $n$ we have $(M+\ve)K_{c,n}<e^{-\delta n}$, where $\delta:=-c\log(2c)>0$. Since $\ve>0$ was arbitrary, this concludes the proof of \eqref{EXPCANCELLATION}.
\end{remark}

\section{An extension of Theorem \ref{curvethm} and  proofs of Theorem \ref{myheight} and Corollary \ref{negativity}}\label{heightsec}

In this section we are interested in extending the result in Theorem \ref{curvethm} to the case $c_2=\frac12$. The problem is that neither $E_n(L,cn)$ nor $H(c)$ is defined for $c=\frac 12$. We overcome this problem by subtracting the singular part of $E_n(L,cn)$ from $E_n(L,cn)$ and $H(c)$. For the rest of this section we let $c_1\in(\frac14,\frac12)$ be fixed.

Recall that $E_n(L,s)$ has a simple pole at $s=\frac n2$ with residue $\pi^{\frac n2}\Gamma(\frac n2)^{-1}$. Hence, for all $n$ and all $L\in X_n$, the limit 
\begin{align*}
\lim_{c\to\frac12}\bigg(E_n(L,cn)-\frac{\pi^{\frac n2}}{\Gamma(\frac n2)(cn-\frac n2)}\bigg)
\end{align*}
exists. Now, since 
\begin{align*}
\lim_{c\to\frac12}V_n^{-2c}
=\frac{n}{\omega_n}=\frac{n\Gamma(\frac n2)}{2\pi^{\frac n2}},
\end{align*}
basic complex analysis gives that also the limit
\begin{align}\label{epsteinlimit}
\lim_{c\to\frac12}\bigg(V_n^{-2c}E_n(L,cn)+\frac{1}{1-2c}\bigg)
\end{align}
exists for all $n$ and all $L\in X_n$. Hence 
we can consider 
\begin{align*}
c\mapsto \widehat E_n(\cdot,cn):=V_n^{-2c}E_n(\cdot,cn)+\frac{1}{1-2c}
\end{align*}
as a $C\big([c_1,\frac12]\big)$-valued random function. Here, of course, the value of the function at $c=\frac12$ is given by the limit \eqref{epsteinlimit}. We now have the following immediate corollary of Theorem \ref{EPSTEIN}.

\begin{cor}\label{EPSTEIN2}
Let $c_1\in(\frac14,\frac12)$. Then for all $\ve>0$ there exists $A_0>1$ such that for all $A\geq A_0$ there exists $n_0\in\Z_{\geq3}$ such that for all $n\geq n_0$ we have 
\begin{align*}
\text{Prob}_{\mu_n}\bigg\{L\in X_n\Big|\sup_{c\in[c_1,\frac12]}\bigg|\widehat E_n(L,cn)-\bigg(\int_0^1V^{-2c}\,dN_n(V)+\int_1^AV^{-2c}\,dR_n(V)\bigg)\bigg|\leq\ve\bigg\}\\
\geq1-\ve.
\end{align*}
\end{cor}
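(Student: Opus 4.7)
The plan is to deduce Corollary \ref{EPSTEIN2} from Theorem \ref{EPSTEIN} by a short algebraic rearrangement combined with a continuity argument at the endpoint $c=\frac12$.

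First I would establish the identity, valid for every $L\in X_n$ and every $c\in[c_1,\frac12)$,
\begin{align*}
\int_0^1 V^{-2c}\,dN_n(V)+\int_1^A V^{-2c}\,dR_n(V)=\int_0^A V^{-2c}\,dR_n(V)+\frac{1}{1-2c}.
\end{align*}
This follows by writing $dR_n(V)=dN_n(V)-dV$ in the second integral on the left, computing $\int_1^A V^{-2c}\,dV=(A^{1-2c}-1)/(1-2c)$, and recombining the $dN_n$-integrals into a single integral over $(0,A]$; the corresponding manipulation applied to the right-hand side yields the same expression. Combined with the definition $\widehat E_n(L,cn)=V_n^{-2c}E_n(L,cn)+(1-2c)^{-1}$, the identity shows that, for every $c\in[c_1,\frac12)$,
\begin{align*}
\widehat E_n(L,cn)-\bigg(\int_0^1 V^{-2c}\,dN_n(V)+\int_1^A V^{-2c}\,dR_n(V)\bigg)=V_n^{-2c}E_n(L,cn)-\int_0^A V^{-2c}\,dR_n(V).
\end{align*}
Hence the error function appearing in Corollary \ref{EPSTEIN2} agrees on the half-open interval $[c_1,\frac12)$ with the error function already controlled by Theorem \ref{EPSTEIN}.

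Next I would promote the supremum from $[c_1,\frac12)$ to the closed interval $[c_1,\frac12]$ by continuity. The left-hand $\widehat E_n(L,cn)$ has been defined precisely so as to be continuous on $[c_1,\frac12]$. On the right-hand side, $\int_0^1 V^{-2c}\,dN_n(V)=2\sum_{\mathcal V_j\leq 1}\mathcal V_j^{-2c}$ is a finite sum depending continuously on $c$, while $\int_1^A V^{-2c}\,dR_n(V)$ is likewise a finite sum plus the elementary term $-(A^{1-2c}-1)/(1-2c)$, both of which extend continuously across $c=\frac12$. Therefore the entire error function in Corollary \ref{EPSTEIN2} is continuous on $[c_1,\frac12]$, and its supremum there coincides with its supremum on $[c_1,\frac12)$.

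The corollary now follows at once: given $\ve>0$, I would invoke Theorem \ref{EPSTEIN} to choose $A_0>0$ and $n_0\in\Z_{\geq3}$ so that, for all $A\geq A_0$ and $n\geq n_0$, the supremum on $[c_1,\frac12)$ of the right-hand side above is $\leq\ve$ with $\mu_n$-probability at least $1-\ve$; by the preceding continuity remark the same bound then holds with the supremum taken over the closed interval. I do not expect any genuine obstacle, since the only substantive manoeuvre is the bookkeeping that absorbs the pole $(1-2c)^{-1}$, and this is exactly parallel to the limiting manipulation already carried out for $Z_0$ in \eqref{goodrep}.
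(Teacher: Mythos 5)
Your proposal is correct and follows essentially the same route as the paper: the key step in both is the identity $\int_0^AV^{-2c}\,dR_n(V)+\frac{1}{1-2c}=\int_0^1V^{-2c}\,dN_n(V)+\int_1^AV^{-2c}\,dR_n(V)$ on $[c_1,\frac12)$, combined with the observation that both $\widehat E_n(L,cn)$ and the truncated integral expression extend continuously to $c=\frac12$, so that the supremum bound from Theorem \ref{EPSTEIN} carries over to the closed interval. The only cosmetic point is to take $A_0>1$ (rather than merely $A_0>0$) so the splitting at $V=1$ is meaningful, which is immediate.
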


\begin{proof}
Note that 
\begin{align*}
\int_0^AV^{-2c}\,dR_n(V)+\frac{1}{1-2c}=\int_0^1V^{-2c}\,dN_n(V)+\int_1^AV^{-2c}\,dR_n(V)
\end{align*}
for all $c\in[c_1,\frac12)$. Hence the corollary follows from Theorem \ref{EPSTEIN} since both $\widehat E_n(L,cn)$ and $\int_0^1V^{-2c}\,dN_n(V)+\int_1^AV^{-2c}\,dR_n(V)$ are continuous on $[c_1,\frac12]$, for each fixed $L\in X_n$. 
\end{proof}


We set 
\begin{align*}
\widehat H(c):=\int_0^{\infty}V^{-2c}\,dR(V)+\frac{1}{1-2c}\,\,\text{ for $c\in[c_1,\sfrac12)$ and }\,\,\widehat H(\sfrac12):=Z_0.
\end{align*}
It follows from Lemma \ref{welldef1}, Lemma \ref{welldef3}, Lemma \ref{Z0lemma}, Remark \ref{Z0remark} and \cite[p.\ 84]{billconv} that we can consider $c\mapsto \widehat H(c)$ as a $C\big([c_1,\frac12]\big)$-valued random function on $\Omega$ (cf.\ Remark \ref{welldef2}). Furthermore we note that \eqref{goodrep} and Remark \ref{Z0formula} give, for all $c\in[c_1,\frac12]$, the formula
\begin{align*}
\widehat H(c)=\int_0^1V^{-2c}\,dN(V)+\int_1^{\infty}V^{-2c}\,dR(V).
\end{align*}
We are now ready to prove the following extension of Theorem \ref{curvethm}.

\begin{thm}\label{curvethm2}
Let $c_1\in(\frac14,\frac{1}{2})$. Then the distribution of the $C\big([c_1,\frac12]\big)$-valued random function $c\mapsto \widehat E_n(\cdot,cn)$ converges to the distribution of $c\mapsto \widehat H(c)$ as $n\to\infty$.
\end{thm}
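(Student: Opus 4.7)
The plan is to mirror the three-step Lévy-Prohorov argument used in the proof of Theorem \ref{curvethm}, now working in the space $C([c_1,\frac12])$. The natural truncated intermediaries are
\begin{align*}
c\mapsto\widehat E_{n,A}(\cdot,cn):=\int_0^1 V^{-2c}\,dN_n(V)+\int_1^A V^{-2c}\,dR_n(V)
\end{align*}
and its Poisson counterpart
\begin{align*}
c\mapsto\widehat H_A(c):=\int_0^1 V^{-2c}\,dN(V)+\int_1^A V^{-2c}\,dR(V).
\end{align*}
Splitting the lower integral at $V=1$ is crucial: the combined object equals $\int_0^A V^{-2c}\,dN(V)-\frac{A^{1-2c}-1}{1-2c}$, whose last term extends continuously to $c=\frac12$ as $-\log A$. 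Hence both truncated functions are genuinely $C([c_1,\frac12])$-valued, not merely $C([c_1,\frac12))$-valued.

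First I would note that the outer approximation from $\widehat E_n(\cdot,cn)$ to $\widehat E_{n,A}(\cdot,cn)$ is precisely the content of Corollary \ref{EPSTEIN2}. The innermost approximation between $\widehat H_A$ and $\widehat H$ is provided by Lemma \ref{intest}: using Remark \ref{Z0formula} at $c=\frac12$, we have the uniform identity
\begin{align*}
\widehat H(c)-\widehat H_A(c)=\int_A^\infty V^{-2c}\,dR(V),\qquad c\in[c_1,\sfrac12],
\end{align*}
and Lemma \ref{intest} already permits $c_2=\frac12$, so the supremum of the right hand side over $c\in[c_1,\frac12]$ can be made arbitrarily small with arbitrarily large $\textbf P$-probability by choosing $A$ large.

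The heart of the proof is extending Lemma \ref{speciallemma} to the full interval $[c_1,\frac12]$: the claim is that the $C([c_1,\frac12])$-valued random function $c\mapsto \widehat E_{n,A}(\cdot,cn)$ converges in distribution to $c\mapsto\widehat H_A(c)$ as $n\to\infty$. Expressed in terms of the point configurations $\{\mathcal V_j\}$ and $\{T_j\}$, both functions are given by the same explicit formula
\begin{align*}
c\mapsto 2\sum_{x_j\le 1}x_j^{-2c}+2\sum_{1<x_j\le A}x_j^{-2c}-\frac{A^{1-2c}-1}{1-2c},
\end{align*}
interpreted at $c=\frac12$ by $-\log A$ in the last term. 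As in the proof of Lemma \ref{speciallemma}, this map $\Omega\to C([c_1,\frac12])$ is continuous at every configuration having no $x_j$ equal to $1$ or to $A$, a set of full $\textbf P$-measure. The Poisson convergence of $\{\mathcal V_j\}$ to $\{T_j\}$ from \cite{poisson}, combined with \cite[Thm.\ 2.7]{billconv}, then yields the desired convergence in distribution.

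The main technical obstacle is really just the verification that the map above is genuinely continuous into $C([c_1,\frac12])$ at generic configurations, including the behaviour at the endpoint $c=\frac12$; this reduces to the observation that $\frac{A^{1-2c}-1}{1-2c}$ is smooth in $c$ on all of $[c_1,\frac12]$ (with value $-\log A$ at $\frac12$), so that the explicit formula above defines a continuous curve in $c$ and depends continuously on the positions of the finitely many $x_j\le A$. Once the three approximations are in place, for any $\ve>0$ I choose $A$ large enough that the third step gives Lévy-Prohorov distance $\le\ve$, and then $n$ large enough that the first and second steps each give Lévy-Prohorov distance $\le\ve$; by the triangle inequality $\pi(\mu_{\widehat E_n},\mu_{\widehat H})\le 3\ve$ for all sufficiently large $n$, which completes the proof.
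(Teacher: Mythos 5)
Your proposal is correct and follows essentially the same route as the paper: the same splitting of the truncated integral at $V=1$ so that the singular term $\frac{1-A^{1-2c}}{1-2c}$ extends continuously to $c=\frac12$, the same use of Corollary \ref{EPSTEIN2}, Lemma \ref{intest} (via Remark \ref{Z0formula}) and the almost-everywhere continuity of the truncation map combined with the Poisson limit theorem and the continuous mapping theorem, and the same concluding triangle inequality in the L\'evy--Prohorov metric. (The only cosmetic difference is that you also exclude configurations with some $x_j=1$, which is unnecessary but harmless since only the cut at $V=A$ can cause a discontinuity.)
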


\begin{proof}
Let $h_A\in C\big([c_1,\frac12]\big)$ be given by
\begin{align*}
h_A(c)=
\begin{cases}
\frac{1-A^{1-2c}}{1-2c}&\text{if $c\in[c_1,\frac12)$,}\\
-\log A&\text{if $c=\frac12$.}
\end{cases}
\end{align*}
To begin with we note that the function $g_A:\Omega\setminus\Omega^{(\infty)}\to C\big([c_1,\frac12]\big)$, defined by
\begin{align*}
g_A(x_1,x_2,\ldots)(c)=2\sum_{x_j\leq A}x_j^{-2c}+h_A(c),
\end{align*}
is continuous $\textbf P$ almost everywhere (cf.\ the proofs of Lemma \ref{welldef1} and Lemma \ref{speciallemma}). Hence it follows from \cite[Thm.\ 1$'$]{poisson} and \cite[Thm.\ 2.7]{billconv} that the $C\big([c_1,\frac12]\big)$-valued random function $$c\mapsto 2\sum_{\mathcal V_j\leq A}\mathcal V_j^{-2c}+h_A(c)=\int_0^1V^{-2c}\,dN_n(V)+\int_1^AV^{-2c}\,dR_n(V)$$ converges in distribution to $$c\mapsto 2\sum_{T_j\leq A}T_j^{-2c}+h_A(c)=\int_0^1V^{-2c}\,dN(V)+\int_1^{A}V^{-2c}\,dR(V)$$ as $n\to\infty$. The theorem now follows from this fact, Lemma \ref{intest} and Corollary \ref{EPSTEIN2} using the L\'{e}vy-Prohorov metric (see \eqref{LP}) in a way almost identical to the one in the proof of Theorem \ref{curvethm} on p.\ \pageref{curveproof}.
\end{proof}

\begin{proof}[Proof of Corollary \ref{negativity}]
Let $\frac14<c_1<c_2\leq\frac{1}{2}$ be given. To start with, we assume $c_2<\frac12$. Let $\mathcal C$ be the following open subset of $C\big([c_1,c_2]\big)$: 
\begin{align*}
\mathcal C:=\Big\{f\in C\big([c_1,c_2]\big)\,\big|\,f(c)<0\,\text{ for all }\, c\in[c_1,c_2]\Big\}.
\end{align*}
Note that
\begin{align*}
\partial\mathcal C=\Bigl\{f\in C\big([c_1,c_2]\big)\:\big|\:\sup_{c\in[c_1,c_2]}f(c)=0\Bigr\}.
\end{align*}
Let $\mu_H$ be the distribution of the $C\big([c_1,c_2]\big)$-valued random function $c\mapsto H(c)$. We claim that
\begin{align}\label{MYHPARTIAL0}
 \mu_H(\partial\mathcal C)=0. 
\end{align}
To prove this, recall that since $0<T_1<T_2<\ldots$ are the points of a Poisson process $\mathcal P$ on the positive real line with constant intensity $\frac12$, they can be realized as the partial sums of an infinite sequence of independent random variables which each has the exponential distribution with parameter $\frac12$ (cf.\ \cite[Sec.\ 4.1]{kingman}). It follows from this that if we parametrize $\Omega$ by the homeomorphism $J:\R_{>0}\times\R_{>0}\times\Omega\to\Omega$ given by $J(u,v,\vecz)=\vecx$ with $x_1=u$, $x_2=u+v$ and $x_j=u+v+z_{j-2}$ for $j\geq3$, then 
\begin{align*}
d\mathbf P(\vecx)=\frac14 e^{-\frac12u}e^{-\frac12v}\,du\,dv\,d\mathbf P(\vecz).
\end{align*}
Hence
\begin{align*}
\mu_H(\partial\mathcal C)=\frac14\int_\Omega\int_0^\infty\int_0^\infty I\bigl(J(u,v,\vecz)\in\mathcal S\bigr)
\, e^{-\frac12u}e^{-\frac12v}\,du\,dv\,d\mathbf P(\vecz),
\end{align*}
where 
\begin{align*}
\mathcal S=\Bigl\{\vecx\in\Omega\:\big|\:\sup_{c\in[c_1,c_2]}H(c)=0\Bigr\}.
\end{align*}
Substituting $v=y-u$ we get
\begin{align}\label{MYHPARTIAL0PF1}
\mu_H(\partial\mathcal C)=\frac14\int_\Omega\int_0^\infty\int_0^yI\bigl(J(u,y-u,\vecz)\in\mathcal S\bigr)
\, e^{-\frac12y}\,du\,dy\,d\mathbf P(\vecz).
\end{align}
Now for a given point $\vecx=J(u,y-u,\vecz)$ we have (assuming $\vecx\in\Omega_{1/2}$, or equivalently $\vecz\in\Omega_{1/2}$)
\begin{align}\label{HFIXKONST}
H(c)&=\int_0^\infty V^{-2c}\,dR(V)=\int_0^{x_2} V^{-2c}\,dR(V)+\int_{x_2}^{\infty} V^{-2c}\,dR(V)\\
&=2u^{-2c}+2y^{-2c}-\frac{y^{1-2c}}{1-2c}+\int_{x_2}^{\infty} V^{-2c}\,dR(V),\nonumber
\end{align}
where the last integral is independent of $u$ for given $y,\vecz$. Since in fact all terms in the second line of \eqref{HFIXKONST} except the first are independent of $u$, and $u^{-2c}$ is a decreasing function of $u>0$ for every fixed $c\in[c_1,c_2]$, it follows that if $J(u,y-u,\vecz)\in\mathcal S$ for some $0<u<y$ and $\vecz\in\Omega_{1/2}$, then $J(u',y-u',\vecz)\notin\mathcal S$ for all $u'$ with $0<u'<u$ or $u<u'<y$. Hence the innermost integral in \eqref{MYHPARTIAL0PF1} vanishes for all $y>0$ and $\vecz\in\Omega_{1/2}$, and we conclude that \eqref{MYHPARTIAL0} holds.

Using \eqref{MYHPARTIAL0}, the first part of the corollary now follows from Theorem \ref{curvethm} and \cite[Thm.\ 2.1]{billconv}.

In the remaining case $\frac14<c_1<c_2=\frac12$ we consider instead the open set
\begin{align*}
\widehat{\mathcal C}:=\biggl\{f\in C\big([c_1,\sfrac12]\big)\:\Big|\: f(c)-\frac1{1-2c}<0\text{ for all }c\in[c_1,\sfrac12)\biggr\},
\end{align*}
and let $\mu_{\widehat H}$ be the distribution of the $C\big([c_1,\frac12]\big)$-valued random function $c\mapsto\widehat H(c)$. Now 
\begin{align*}
\mu_{\widehat H}\big(\partial\widehat{\mathcal C}\,\big)=0
\end{align*}
holds, with almost the same proof as before. (Indeed, this boils down to proving that the triple integral in \eqref{MYHPARTIAL0PF1} vanishes, where now 
\begin{align*}
\mathcal S=\Big\{\vecx\in\Omega\:\big|\: \sup_{c\in[c_1,\frac12)} H(c)=0\Big\},
\end{align*}
and the same argument as before applies, since $\lim_{c\to\frac12-}H(c)=-\infty$ for all $\vecx\in\Omega_{1/2}$.)
Hence by Theorem \ref{curvethm2} and  \cite[Thm.\ 2.1]{billconv} we have
\begin{multline*}
\lim_{n\to\infty}\text{Prob}_{\mu_n}\biggl\{L\in X_n\:\Big|\:\widehat E_n(L,cn)-\frac1{1-2c}<0\text{ for all }c\in[c_1,\sfrac12)\biggr\}\\
=\text{Prob}\biggl\{\widehat H(c)-\frac1{1-2c}<0\text{ for all }c\in[c_1,\sfrac12)\biggr\}.
\end{multline*}
This implies that the first part of the corollary holds also when $c_2=\frac12$.

In order to prove $0<f(c_1,c_2)<1$ for general $\frac14<c_1<c_2\leq\frac12$, we let $\Omega(A)=\{\vecx\in\Omega\mid x_1>A\}$ for $A>0$. Clearly for any $\vecx\in\Omega(A)$ we have 
\begin{align*}
\int_0^AV^{-2c}\,dR(V)=-\int_0^AV^{-2c}\,dV=-\frac{A^{1-2c}}{1-2c}
\end{align*}
for all $c\in[c_1,c_2]\setminus\{\frac12\}$. Hence, by differentiation with respect to $c$, we find that for all $A>1$ and $\vecx\in\Omega(A)$ we have $\int_0^AV^{-2c}\,dR(V)\leq-e\log A<0$ for all $c\in[c_1,c_2]\setminus\{\frac12\}$. Recall from Lemma \ref{intest} that given $\ve>0$ there exists $A>1$ such that with probability $\geq1-\ve$ we have $\sup_{c\in[c_1,c_2]}\big|\int_{A}^{\infty}V^{-2c}\,dR(V)\big|\leq\ve$. Note that, since the Poisson process $\mathcal P$ may be realized as the superposition of a Poisson process on $(0,A)$ and an independent Poisson process on $(A,\infty)$, both with constant intensity $\frac12$ (cf., e.g., \cite[Sec.\ 2.2]{kingman}), and since furthermore $\int_A^{\infty}V^{-2c}\,dR(V)$ only depends on those points of $\mathcal P$ which belong to $(A,\infty)$, the probability of $\sup_{c\in[c_1,c_2]}\big|\int_{A}^{\infty}V^{-2c}\,dR(V)\big|\leq\ve$ remains unchanged if we condition on $\vecx\in\Omega(A)$. Hence, for small enough $\ve$ and large enough $A$, we have $f(c_1,c_2)\geq(1-\ve)\textbf P(\Omega(A))>0$. Finally, by a similar argument where we instead condition on the event $N(A)=B$ for some large $B$, we also obtain $f(c_1,c_2)\leq\textbf P\big\{\vecx\in\Omega\mid H(c_1)<0\big\}<1$.
\end{proof}

Theorem \ref{curvethm2} also has the following corollary.

\begin{cor}\label{lastcorollary}
The random variable 
\begin{align*}
\widehat E_n\big(\cdot,\sfrac n2\big)=\lim_{c\to\frac12}\bigg(V_n^{-2c}E_n(\cdot,cn)+\frac{1}{1-2c}\bigg)
\end{align*}
converges in distribution to $Z_0$ as $n\to\infty$. 
\end{cor}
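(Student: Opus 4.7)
The plan is to deduce this as a direct consequence of Theorem \ref{curvethm2} via the continuous mapping theorem. Fix any $c_1\in(\frac14,\frac12)$. Theorem \ref{curvethm2} says that the $C([c_1,\frac12])$-valued random function $c\mapsto\widehat E_n(\cdot,cn)$ converges in distribution to $c\mapsto\widehat H(c)$ as $n\to\infty$.

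Consider the evaluation map $\mathrm{ev}_{1/2}:C([c_1,\frac12])\to\R$ defined by $\mathrm{ev}_{1/2}(f)=f(\frac12)$. This map is continuous (in fact $1$-Lipschitz) with respect to the sup norm on $C([c_1,\frac12])$, and hence is Borel measurable and everywhere continuous. By the continuous mapping theorem (cf.\ \cite[Thm.\ 2.7]{billconv}), the image random variable $\mathrm{ev}_{1/2}\big(c\mapsto\widehat E_n(\cdot,cn)\big)=\widehat E_n(\cdot,\frac n2)$ converges in distribution to $\mathrm{ev}_{1/2}\big(c\mapsto\widehat H(c)\big)=\widehat H(\frac12)=Z_0$ as $n\to\infty$.

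There is essentially no obstacle here: the work is already packaged into Theorem \ref{curvethm2}, whose proof sets up the random function $c\mapsto\widehat H(c)$ to be continuous on $[c_1,\frac12]$ with the value at $c=\frac12$ being precisely $Z_0$ (via Lemma \ref{Z0lemma} and Remark \ref{Z0remark}), and which verifies that the limiting value on the lattice side, $\widehat E_n(L,\frac n2)$, is also well-defined as a continuous extension from $[c_1,\frac12)$ to $[c_1,\frac12]$ for every $L\in X_n$. Thus the only step is to invoke the continuous mapping theorem for the evaluation functional at $c=\frac12$, which completes the proof of the corollary.
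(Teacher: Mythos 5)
Your proposal is correct and is essentially identical to the paper's own proof: both invoke Theorem \ref{curvethm2} together with the continuity of the evaluation map $f\mapsto f(\frac12)$ on $C\big([c_1,\frac12]\big)$ and the continuous mapping theorem \cite[Thm.\ 2.7]{billconv}.
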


\begin{proof}
Given $c_1\in(\frac14,\frac12)$ the evaluation map $C\big([c_1,\frac12]\big)\ni f\mapsto f(\frac12)$ is continuous. Hence the desired result follows from Theorem \ref{curvethm2} and  \cite[Thm.\ 2.7]{billconv}. 
\end{proof}

As a consequence of this result we obtain an easy proof of Theorem \ref{myheight}.

\begin{proof}[Proof of Theorem \ref{myheight}]
First, applying the functional equation \eqref{functionaleq} and \eqref{Fn}, we get
\begin{align}\label{newfunctionaleq}
E_n\big(L,\sfrac n2-s\big)&=\pi^{\frac n2-s}\Gamma\big(\sfrac n2-s\big)^{-1}F_n\big(L,\sfrac n2-s\big)
=\pi^{\frac n2-s}\Gamma\big(\sfrac n2-s\big)^{-1}F_n\big(L^*,s\big)\\
&=\pi^{\frac n2-2s}\Gamma\big(\sfrac n2-s\big)^{-1}\Gamma(s)E_n\big(L^*,s\big).\nonumber
\end{align} 
We are interested in this relation when $s$ is small. Using \eqref{heightdef} and basic knowledge about the gamma function we have, for $s$ sufficiently small,
\begin{align*}
&\pi^{\frac n2-2s}=\pi^{\frac n2}\Big(1-2(\log\pi)s+O\big(s^2\big)\Big);\\
&\Gamma\big(\sfrac n2-s\big)^{-1}=\Big(\Gamma(\sfrac n2)-\Gamma'(\sfrac n2)s+O\big(s^2\big)\Big)^{-1}\\
&\hspace{55pt}=\Gamma(\sfrac n2)^{-1}\bigg(1-\frac{\Gamma'(\sfrac n2)}{\Gamma(\sfrac n2)}s+O\big(s^2\big)\bigg)^{-1}=\Gamma(\sfrac n2)^{-1}\bigg(1+\frac{\Gamma'(\sfrac n2)}{\Gamma(\sfrac n2)}s+O\big(s^2\big)\bigg);\\
&\Gamma(s)=s^{-1}\Gamma(s+1)=s^{-1}-\gamma+O(s);\\
&E_n\big(L^*,s\big)=-\Big(1-\big(h_n(L)-2\log(2\pi)\big)s+O\big(s^2\big)\Big),
\end{align*}
where $\gamma$ is Euler's constant and the implied constants are allowed to depend on $n$. Using these expansions in \eqref{newfunctionaleq} yields
\begin{align}\label{svariant}
E_n\big(L,\sfrac n2-s\big)&=-\pi^{\frac n2}\Gamma(\sfrac n2)^{-1}\bigg(s^{-1}+\Big(2\log2+\frac{\Gamma'(\sfrac n2)}{\Gamma(\sfrac n2)}-h_n(L)-\gamma\Big)+O(s)\bigg).
\end{align}
Writing $\frac n2-s$ as $cn$ and using the relation $\pi^{\frac n2}\Gamma(\sfrac n2)^{-1}=\frac12\omega_n$ we get, for $|c-\sfrac12|$ sufficiently small,
\begin{align*}
E_n\big(L,cn\big)&=-\frac{\omega_n}{n}\bigg(\frac{1}{1-2c}+\frac n2\Big(2\log2+\frac{\Gamma'(\sfrac n2)}{\Gamma(\sfrac n2)}-h_n(L)-\gamma\Big)+O\big(|c-\sfrac12|\big)\bigg).
\end{align*}
Since we furthermore have
\begin{align*}
V_n^{-2c}=\frac{n}{\omega_n}\Big(\frac{\omega_n}{n}\Big)^{1-2c}=\frac{n}{\omega_n}\Big(1+\big(\log\omega_n-\log n\big)(1-2c)+O\big(|c-\sfrac12|^2\big)\Big),
\end{align*}
we obtain
\begin{align*}
&V_n^{-2c}E_n\big(L,cn\big)\\
&=-\bigg(\frac{1}{1-2c}+\log\omega_n-\log n+\frac n2\Big(2\log2+\frac{\Gamma'(\sfrac n2)}{\Gamma(\sfrac n2)}-h_n(L)-\gamma\Big)+O\big(|c-\sfrac12|\big)\bigg).
\end{align*}
Hence, we conclude that
\begin{align}\label{limitexpression}
\lim_{c\to\frac12}\bigg(V_n^{-2c}E_n(L,cn)+\frac{1}{1-2c}\bigg)=\log n-\log\omega_n+\frac n2\bigg(h_n(L)+\gamma-2\log2-\frac{\Gamma'(\sfrac n2)}{\Gamma(\sfrac n2)}\bigg).
\end{align}

Next we study the asymptotics of \eqref{limitexpression} as $n\to\infty$. Using \eqref{omegan} and Stirling's formula we get
\begin{multline*}
\lim_{c\to\frac12}\bigg(V_n^{-2c}E_n(L,cn)+\frac{1}{1-2c}\bigg)
=\log n-\frac n2\log\Big(\frac{2\pi e}{n}\Big)-\frac12\log\Big(\frac{n}{\pi}\Big)\\
+o(1)+\frac n2\Big(h_n(L)+\gamma-2\log2-\log\Big(\frac n2\Big)+n^{-1}+O\big(n^{-2}\big)\Big)\\
=\frac12\log(\pi n)+\frac12+\frac n2\Big(h_n(L)-\big(\log(4\pi)-\gamma+1\big)\Big)+o(1).
\end{multline*}
Hence we conclude that
\begin{align*}
2\widehat E_n\big(L,\sfrac n2\big)-\log\pi-1=n\Big(h_n(L)-\big(\log(4\pi)-\gamma+1\big)\Big)+\log n+o(1),
\end{align*}
where $o(1)$ stands for a certain function of $n$ which is independent of $L$ and which tends to $0$ as $n\to\infty$. Using e.g.\ the L\'evy-Prohorov metric on $\mathcal P(\R)$ (the set of Borel measures on $\R$), it now follows from Corollary \ref{lastcorollary} that
\begin{align*}
n\Big(h_n(L)-\big(\log(4\pi)-\gamma+1\big)\Big)+\log n
\end{align*}
converges in distribution to $2Z_0-\log\pi-1$ as $n\to\infty$, which is the desired result.
\end{proof}

\begin{remark}
Our proof shows that Theorem \ref{myheight} is really a special case of Theorem \ref{curvethm2}, and we think this nicely illustrates the power of Theorem \ref{curvethm2}. However, it is worth noticing that considerations involving $C\big([c_1,\frac12]\big)$-valued random functions are not at all essential for the proof of Theorem \ref{myheight}: An alternative proof of Theorem \ref{myheight} can be given by working more directly along the lines of Sarnak and Str\"ombergsson \cite[Sec.\ 6]{sst} and applying the $R_n(V)$-bound in Theorem \ref{Rnthm} and our Poisson limit result from \cite{poisson}.

To outline this alternative approach, recall from \cite[Sec.\ 4]{sst} that
\begin{align}\label{lastheight}
h_n(L)=\log(4\pi)-\gamma-\frac2n+{\sum_{\vecm\in L^*}}'G\big(0,\pi |\vecm|^2\big)+{\sum_{\vecm\in L}}'G\big(\sfrac n2,\pi |\vecm|^2\big),
\end{align}
where we call the two sums above $J(L)$ and $H(L)$ respectively. Using the same notation as in Section \ref{Epsteinsection} we have, for any $A>0$, 
\begin{multline}\label{HLEXPANSION}
H(L)=\int_0^{\infty}G\Big(\frac n2,\pi\Big(\frac{nV}{\omega_n}\Big)^{\frac{2}{n}}\Big)\,dN_n(V)
=\int_0^{A}G\Big(\frac n2,\pi\Big(\frac{nV}{\omega_n}\Big)^{\frac{2}{n}}\Big)\,dN_n(V)\\
+\int_A^{\infty}G\Big(\frac n2,\pi\Big(\frac{nV}{\omega_n}\Big)^{\frac{2}{n}}\Big)\,dV
+\int_A^{\infty}G\Big(\frac n2,\pi\Big(\frac{nV}{\omega_n}\Big)^{\frac{2}{n}}\Big)\,dR_n(V).
\end{multline}
The last integral in \eqref{HLEXPANSION} can be bounded using Theorem \ref{Rnthm} and Lemma \ref{sstbound}. (The computations are exactly as in the proof of Lemma \ref{Htail} but a tiny bit simpler as we are
working only with $c=\frac12$, instead of aiming at a uniform bound over the interval $c\in[c_1,\frac12]$.) The result is that the random variable
\begin{align*}
n\int_A^\infty G\Bigl(\frac n2,\pi\Bigl(\frac{nV}{\omega_n}\Bigr)^{\frac2n}\Bigr)\,dR_n(V)
\end{align*}
converges in distribution to the constant $0$, as $A,n\to\infty$. (Naturally, this also follows as a consequence of Lemma \ref{Htail}, since $K_{1/2,n}=\frac 2n$.) Regarding the first integral in the right hand side of \eqref{HLEXPANSION}, the same argument as in Proposition \ref{HAprop} (cf.\ also \eqref{firstA} and Lemma \ref{GAMMA}) shows that, for fixed $A>0$, the distributions of the two random variables
\begin{align*}
n\int_0^AG\Bigl(\frac n2,\pi\Bigl(\frac{nV}{\omega_n}\Bigr)^{\frac2n}\biggr)\,dN_n(V)\qquad\text{and}\qquad2\int_0^A V^{-1}\,dN_n(V)
\end{align*}
have L\'evy-Prohorov distance tending to $0$ as $n\to\infty$. Furthermore, applying \cite[Thm.\ 1$'$]{poisson} and \cite[Thm.\ 2.7]{billconv} in the usual way (this time for real-valued random variables), it follows that the random variable $2\int_0^A V^{-1}\,dN_n(V)$ converges in distribution to $2\int_0^A V^{-1}\,dN(V)$ as $n\to\infty$. Finally, the middle integral in the right hand side of \eqref{HLEXPANSION} can be evaluated asymptotically as $n\to\infty$, for example as follows. Using Lemma \ref{int} and Lemma \ref{GAMMA} we find that, for fixed $A>0$ and with an arbitrary fixed constant $0<\delta<\frac1{2e}$,
\begin{align*}
\int_A^\infty G\Bigl(\frac n2,\pi\Bigl(\frac{nV}{\omega_n}\Bigr)^{\frac2n}\Bigr)\,dV&=\lim_{s\to\frac n2-}\biggl(\frac1{\frac n2-s}-\int_0^A G\Bigl(s,\pi\Bigl(\frac{nV}{\omega_n}\Bigr)^{\frac2n}\Bigr)\,dV\biggr)\\
&=\biggl(\lim_{s\to\frac n2-}\frac{1-\Gamma(\frac n2)^{-1}\Gamma(s)\bigl(\pi(\frac{nA}{\omega_n})^{2/n}\bigr)^{\frac n2-s}}{\frac n2-s}\biggr)+O\big(e^{-\delta n}\big)\\
&=\frac{\Gamma'(\frac n2)}{\Gamma(\frac n2)}-\log\Bigl(\pi\Bigl(\frac{nA}{\omega_n}\Bigr)^{2/n}\Bigr)+O\big(e^{-\delta n}\big)\\
&=1-n^{-1}\log n-n^{-1}\bigl(1+\log\pi+2\log A+o(1)\bigr)
\end{align*}
as $n\to\infty$. Collecting these results, and also using the fact that the random variable $2\int_0^AV^{-1}\,dN(V)-2\log A$ converges in distribution to $2Z_0$ as $A\to\infty$, we conclude that the random variable $n\big(H(L)-1\big)+\log n$ converges in distribution to $2Z_0-\log\pi-1$ as $n\to\infty$.

Similarly,
\begin{multline*}
J(L^*)=\int_0^{\infty}G\Big(0,\pi\Big(\frac{nV}{\omega_n}\Big)^{\frac{2}{n}}\Big)\,dN_n(V)\\
=\int_0^{\infty}G\Big(0,\pi\Big(\frac{nV}{\omega_n}\Big)^{\frac{2}{n}}\Big)\,dV
+\int_0^{\infty}G\Big(0,\pi\Big(\frac{nV}{\omega_n}\Big)^{\frac{2}{n}}\Big)\,dR_n(V).
\end{multline*}
Here the first integral in the right hand side can be evaluated explicitly by Lemma \ref{int}, and equals $\mathbb E\big(J(L^*)\big)=\frac2n$. The second integral in the right hand side can be bounded using Theorem \ref{Rnthm} and Lemma \ref{sstbound} in the same way as in the proof of Proposition \ref{JAprop}; the result is that the random variable
\begin{align*}
n\int_0^\infty G\Bigl(0,\pi\Bigl(\frac{nV}{\omega_n}\Bigr)^{\frac2n}\Bigr)\,dR_n(V)
\end{align*}
converges in distribution to the constant $0$, as $n\to\infty$. (The same result also follows as a consequence of Proposition \ref{JAprop}, for $c=\frac12$.) Hence we conclude that the random variable $nJ(L^*)$ (and hence also $nJ(L)$) converges in distribution to the constant $2$ as $n\to\infty$. Theorem \ref{myheight} now follows from \eqref{lastheight} and the above limit results for $n\big(H(L)-1\big)+\log n$ and $nJ(L^*)$.
\hfill$\square$
\end{remark}

\section{Proof of Theorem \ref{independentgaussians}}

In this section we prove Theorem \ref{independentgaussians}. The proof is based on a study of the joint moments of an explicit truncation of  
\begin{align*}
\Big(\big(2c_1-\sfrac12\big)^{\frac12}H(c_1),\ldots,\big(2c_m-\sfrac12\big)^{\frac12}H(c_m)\Big).
\end{align*} 
To be more precise we will, for $\delta>0$, consider the random vector
\begin{align}\label{deltatruncation}
\Big(\big(2c_1-\sfrac12\big)^{\frac12}H(c_1,\delta),\ldots,\big(2c_m-\sfrac12\big)^{\frac12}H(c_m,\delta)\Big),
\end{align} 
where
\begin{align*}
H(c,\delta):=\int_{\delta}^{\infty}V^{-2c}\,dR(V).
\end{align*}
In order to calculate the joint moments of the random vector \eqref{deltatruncation} we first prove a formula closely related to \cite[Prop.\ 3]{poisson}.

\begin{prop}\label{newlem}
Let $k\geq1$ and denote by $\mathcal P'(k)$ the set of partitions of $\{1,\ldots,k\}$ containing no singleton sets. For $1\leq j\leq k$ let $f_j:\R_{\geq0}\to\R$ be functions satisfying $\prod_{j\in B}f_j\in L^1(\R_{\geq0})$ for every nonempty subset $B\subseteq\{1,\ldots,k\}$. Then 
\begin{align*}
\mathbb E\bigg(\prod_{j=1}^k\int_{0}^{\infty}f_j(V)\,dR(V)\bigg)=\underset{P \in\mathcal{P}'(k)}{\sum}2^{k-\#P}\underset{B\in P}{\prod}\bigg(\int_{0}^{\infty}\prod_{j\in B}f_j(V)\,dV\bigg). 
\end{align*}
\end{prop}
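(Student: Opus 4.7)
The plan is to compute the expectation directly by expanding each integral against $dR(V)$ into a ``Poisson sum'' part and a deterministic mean part, and then apply the $k$-point Campbell (moment) formula for the Poisson process $\mathcal P$ to the Poisson-sum parts. Specifically, for each $j$ I would write
\begin{equation*}
\int_0^\infty f_j\,dR(V)=S_j-\mu_j,\qquad S_j:=2\sum_{\ell\geq1}f_j(T_\ell),\qquad \mu_j:=\int_0^\infty f_j(V)\,dV,
\end{equation*}
noting that $\mathbb E(S_j)=\mu_j$ because $\mathcal P$ has intensity $\tfrac12$. Then I would expand
\begin{equation*}
\mathbb E\prod_{j=1}^k(S_j-\mu_j)=\sum_{A\subseteq\{1,\ldots,k\}}(-1)^{k-|A|}\,\mathbb E\Bigl(\prod_{j\in A}S_j\Bigr)\prod_{j\notin A}\mu_j.
\end{equation*}

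Next I would evaluate each $\mathbb E\prod_{j\in A}S_j$ using the standard moment formula for Poisson point processes: splitting the multiple sum $\sum_{\ell_1,\ldots,\ell_{|A|}}\prod_{j\in A}f_j(T_{\ell_j})$ according to which of the indices $\ell_j$ coincide gives a sum over set partitions $P$ of $A$, and applying Campbell's formula on each block of distinct indices produces
\begin{equation*}
\mathbb E\Bigl(\prod_{j\in A}S_j\Bigr)=2^{|A|}\sum_{P\in\mathcal{P}(A)}\prod_{B\in P}\Bigl(\tfrac12\int_0^\infty\!\prod_{j\in B}f_j(V)\,dV\Bigr)=\sum_{P\in\mathcal{P}(A)}2^{|A|-\#P}\prod_{B\in P}\int_0^\infty\prod_{j\in B}f_j\,dV.
\end{equation*}
Absolute convergence of all the resulting multiple sums and integrals is guaranteed by the hypothesis $\prod_{j\in B}f_j\in L^1$ for every $B$, which justifies interchanging expectation, summation over $\ell_j$, and the combinatorial split.

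The third and crucial step is to reorganize the double sum over $(A,P)$ as a sum over partitions $Q$ of the full index set $\{1,\ldots,k\}$. Any such $Q$ arises from a unique $(A,P)$ once one declares, for each \emph{singleton} block $\{j\}$ of $Q$, whether $j$ belongs to $A$ (so $\{j\}$ is a singleton of $P$, contributing $2^{0}\int f_j\,dV=\mu_j$ with sign $+1$) or to $A^c$ (contributing $\mu_j$ with sign $-1$); every non-singleton block of $Q$ must sit inside $A$. Summing over the two choices at each singleton produces a factor of $(+\mu_j)+(-\mu_j)=0$, so every $Q$ containing any singleton cancels exactly. Only $Q\in\mathcal P'(k)$ survive, and for such $Q$ we have $A=\{1,\ldots,k\}$, no sign, and the contribution $2^{k-\#Q}\prod_{B\in Q}\int\prod_{j\in B}f_j\,dV$, which is precisely the claimed formula.

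The main obstacle is not conceptual but bookkeeping: the sign-cancellation across singletons must be carried out carefully, and one must check that the exponents of $2$ add up correctly ($\sum_{B\in P}(|B|-1)=|A|-\#P$, which becomes $k-\#Q$ when only non-singleton blocks survive). A secondary technical point is verifying that Campbell's formula may be applied with integrability in $L^1$ alone rather than $L^2$; this works because the inclusion-exclusion over coinciding indices, combined with $\prod_{j\in B}|f_j|\in L^1$ for each $B$, keeps every intermediate sum absolutely convergent.
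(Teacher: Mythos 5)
Your proposal is correct and follows essentially the same route as the paper: decompose $\int_0^\infty f_j\,dR(V)$ into the Poisson sum minus its mean, expand the product over subsets $A$, apply the Poisson moment formula (the paper cites its Prop.~3 of \cite{poisson} for exactly the identity you derive from Campbell's formula), and then reorganize the double sum over $(A,P)$ as a sum over partitions of $\{1,\ldots,k\}$ in which the singleton blocks cancel by the alternating signs. The bookkeeping you flag (the $2^{\#S(P')}$ pairs $(A,P)$ giving the same partition and the factor $\sum_{C\subseteq S(P')}(-1)^{\#C}=0$) is precisely how the paper carries it out.
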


\begin{remark}
In particular, when $1\leq k\leq 3$ Proposition \ref{newlem} gives
\begin{align*}
&\mathbb E\bigg(\int_{0}^{\infty}f_1(V)\,dR(V)\bigg)=0\,;\\
&\mathbb E\bigg(\prod_{j=1}^2\int_{0}^{\infty}f_j(V)\,dR(V)\bigg)=2\int_{0}^{\infty}f_1(V)f_2(V)\,dV\,;\\
&\mathbb E\bigg(\prod_{j=1}^3\int_{0}^{\infty}f_j(V)\,dR(V)\bigg)=4\int_{0}^{\infty}f_1(V)f_2(V)f_3(V)\,dV.
\end{align*}
\end{remark}

\begin{proof}[Proof of Proposition \ref{newlem}]
Let $K=\{1,\ldots,k\}$. Note that for each $1\leq j\leq k$ we have 
\begin{align*}
\int_{0}^{\infty}f_j(V)\,dR(V)=2\sum_{n=1}^{\infty}f_j(T_n)-\int_0^{\infty}f_j(V)\,dV.
\end{align*}
Using this observation together with \cite[Prop.\ 3]{poisson} we get
\begin{align}\label{firstnewlem}
&\mathbb E\bigg(\prod_{j=1}^k\int_{0}^{\infty}f_j(V)\,dR(V)\bigg)\\
&=\sum_{A\subset K}(-1)^{\#(K\setminus A)}\bigg(\prod_{j\in K\setminus A}\int_0^{\infty}f_j(V)\,dV\bigg)\mathbb E\bigg(\prod_{j\in A}2\sum_{n=1}^{\infty}f_j(T_n)\bigg)\nonumber\\
&=\sum_{A\subset K}\sum_{P\in\mathcal P(A)}(-1)^{\#(K\setminus A)}2^{\#A-\#P}\bigg(\prod_{j\in K\setminus A}\int_0^{\infty}f_j(V)\,dV\bigg)\prod_{B\in P}\int_0^{\infty}\prod_{j\in B}f_j(V)\,dV,\nonumber
\end{align}
where $\mathcal{P}(A)$ denotes the set of partitions of the set $A$. Given $A\subset K$ and $P\in\mathcal P(A)$ we define $P'(A,P)$ to be the partition
\begin{align*}
P'(A,P):=\big\{\{j\}\mid j\in K\setminus A\big\}\cup P
\end{align*}
of $K$. Rewriting the right hand side of \eqref{firstnewlem} in terms of partitions of $K$ yields
\begin{align}\label{secondnewlem}
\sum_{A\subset K}\sum_{P\in\mathcal P(A)}(-1)^{\#(K\setminus A)}2^{\#A-\#P}\prod_{B\in P'(A,P)}\int_0^{\infty}\prod_{j\in B}f_j(V)\,dV.
\end{align}

For each partition $P'\in\mathcal P(K)$ we let $S(P')\subset K$ denote the union of the singleton sets in $P'$. Note that in the double sum \eqref{secondnewlem} we have $P'(A,P)=P'$ for exactly $2^{\# S(P')}$ pairs $(A,P)$. Indeed, when $K\setminus A$ runs through all subsets of $S(P')$ there exists, for each such $A$, a unique partition $P\in\mathcal P(A)$ with $P'(A,P)=P'$. We conclude that \eqref{secondnewlem} equals
\begin{align*}
&\sum_{P'\in\mathcal P(K)}2^{k-\#P'}\bigg(\sum_{C\subset S(P')}(-1)^{\#C}\bigg)\prod_{B\in P'}\int_0^{\infty}\prod_{j\in B}f_j(V)\,dV\\
&=\sum_{\substack{P'\in\mathcal P(K)\\S(P')=\emptyset}}2^{k-\#P'}\prod_{B\in P'}\int_0^{\infty}\prod_{j\in B}f_j(V)\,dV,
\end{align*}
which is the desired result.
\end{proof}

We note that the functions $$g_{c,\delta}(V):=V^{-2c}I(V\geq\delta)$$ do not satisfy the assumption in Proposition \ref{newlem} for any choice of $c\in(\frac14,\frac12)$ and $\delta>0$. However, by an approximation argument we get the following corollary.

\begin{cor}\label{newcor}
Let $k\geq1$ and let $\delta>0$ and $\frac14<c_1\leq\ldots\leq c_k<\frac12$ be fixed. Then
\begin{align*}
\mathbb E\bigg(\prod_{j=1}^kH(c_j,\delta)\bigg)=\underset{P \in\mathcal{P}'(k)}{\sum}2^{k-\#P}\delta^{\#P-2\sum_{j=1}^{k}c_j}\underset{B\in P}{\prod}\frac{1}{2\sum_{j\in B}c_j-1}. 
\end{align*}
\end{cor}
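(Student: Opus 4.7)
The natural approach is to apply Proposition \ref{newlem} after truncating the integrands at a finite upper endpoint, and then remove the truncation in a limit. The obstacle to invoking Proposition \ref{newlem} directly is that $f_j(V):=V^{-2c_j}I(V\geq\delta)$ fails the integrability hypothesis on singleton blocks, since $2c_j<1$ makes $f_j\notin L^1(\R_{\geq0})$. Observe, however, that on a block $B$ with $|B|\geq 2$ one has $2\sum_{j\in B}c_j>|B|/2\geq 1$, so the obstruction is only on the singletons --- which is exactly what Proposition \ref{newlem} excludes.

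The plan is then as follows. For $M>\delta$, set $f_j^{(M)}(V):=V^{-2c_j}I(\delta\leq V\leq M)$ and $H^{(M)}(c_j,\delta):=\int_0^\infty f_j^{(M)}(V)\,dR(V)$. Each $f_j^{(M)}$ has compact support, so the integrability condition of Proposition \ref{newlem} holds trivially for every subset $B$, yielding
\begin{align*}
\mathbb E\bigg(\prod_{j=1}^k H^{(M)}(c_j,\delta)\bigg)=\sum_{P\in\mathcal P'(k)}2^{k-\#P}\prod_{B\in P}\int_\delta^M V^{-2\sum_{j\in B}c_j}\,dV.
\end{align*}
I would then let $M\to\infty$ on both sides. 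On the right, every block $B$ in a partition from $\mathcal P'(k)$ has $|B|\geq 2$, hence $2\sum_{j\in B}c_j>1$, so the individual integrals converge to $\delta^{1-2\sum_{j\in B}c_j}/(2\sum_{j\in B}c_j-1)$; collecting the $\delta$-powers across the blocks of $P$ produces the exponent $\#P-2\sum_{j=1}^k c_j$ announced in the corollary.

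The main obstacle is the left-hand side limit. By Lemma \ref{lordagslemma}, $H^{(M)}(c_j,\delta)\to H(c_j,\delta)$ almost surely as $M\to\infty$, so $\prod_j H^{(M)}(c_j,\delta)\to\prod_j H(c_j,\delta)$ almost surely. To move the limit inside the expectation I would establish uniform $L^p$ boundedness: applying the displayed identity with $k$ replaced by any even integer $p$ and with all $c_j$ equal to a common $c\in(\tfrac14,\tfrac12)$ gives
\begin{align*}
\mathbb E\bigl((H^{(M)}(c,\delta))^p\bigr)=\sum_{P\in\mathcal P'(p)}2^{p-\#P}\prod_{B\in P}\int_\delta^M V^{-2|B|c}\,dV,
\end{align*}
and each integral on the right is uniformly bounded in $M$ since $2|B|c\geq 4c>1$. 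Choosing an even $p>k$ and invoking the generalized H\"older inequality, the family $\{\prod_{j=1}^k H^{(M)}(c_j,\delta)\}_M$ is bounded in $L^{p/k}$ with $p/k>1$, hence uniformly integrable. Vitali's convergence theorem then upgrades the almost-sure convergence to $L^1$ convergence, which together with the right-hand side computation completes the proof.
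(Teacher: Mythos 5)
Your proposal is correct and follows essentially the same route as the paper: truncate the integral at a finite upper endpoint, apply Proposition \ref{newlem} to the truncation, pass to the limit on the right-hand side using $\#B\geq2$, and justify the interchange of limit and expectation on the left via a uniform moment bound obtained from Proposition \ref{newlem} itself. The only (immaterial) differences are that the paper bounds the second moment of the truncated product directly (i.e.\ applies Proposition \ref{newlem} with $2k$ functions) and combines convergence in distribution with uniform integrability, whereas you bound each factor in $L^p$ and use generalized H\"older plus almost-sure convergence and Vitali.
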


\begin{proof}
For all $\frac14<c<\frac12$ and $A>\delta$ we let
\begin{align*}
f_A(c,\delta):=\int_{\delta}^{A}V^{-2c}\,dR(V)=2\sum_{\delta<T_j\leq A}T_j^{-2c}-\frac{A^{1-2c}-\delta^{1-2c}}{1-2c}.
\end{align*}
As in Lemma \ref{welldef1} we find that $f_A(c,\delta)$ is a measurable function on $\Omega$. We furthermore note that Lemma \ref{intest} implies that the random vector $$\big(f_A(c_1,\delta),\ldots,f_A(c_k,\delta)\big)$$ tends in distribution to $$\big(H(c_1,\delta),\ldots,H(c_k,\delta)\big)$$ as $A\to\infty$. Hence it follows from \cite[Thm.\ 2.7]{billconv} that $\prod_{j=1}^kf_A(c_j,\delta)$ converges in distribution to  $\prod_{j=1}^kH(c_j,\delta)$ as $A\to\infty$.

Now, applying Proposition \ref{newlem}, we get
\begin{align*}
\mathbb E\bigg(\Big|\prod_{j=1}^kf_A(c_j,\delta)\Big|^2\bigg)=\underset{P \in\mathcal{P}'(2k)}{\sum}2^{2k-\#P}\underset{B\in P}{\prod}\int_{\delta}^{A}\prod_{j\in B}V^{-2\tilde c_j}\,dV,
\end{align*}
where $\tilde c_{2j}=\tilde c_{2j-1}=c_j$ for $1\leq j\leq k$. By the dominated convergence theorem, together with the fact that $\#B\geq2$ for all $B\in P\in\mathcal P'(2k)$, we have 
\begin{align*}
\lim_{A\to\infty}\mathbb E\bigg(\Big|\prod_{j=1}^kf_A(c_j,\delta)\Big|^2\bigg)=\underset{P \in\mathcal{P}'(2k)}{\sum}2^{2k-\#P}\underset{B\in P}{\prod}\int_{\delta}^{\infty}\prod_{j\in B}V^{-2\tilde c_j}\,dV
\end{align*}
and hence, in particular, it follows that $\sup_{A>\delta}\mathbb E\bigg(\Big|\prod_{j=1}^kf_A(c_j,\delta)\Big|^2\bigg)<\infty$. Similarly we find that
\begin{align*}
\lim_{A\to\infty}\mathbb E\bigg(\prod_{j=1}^kf_A(c_j,\delta)\bigg)=\underset{P \in\mathcal{P}'(k)}{\sum}2^{k-\#P}\underset{B\in P}{\prod}\int_{\delta}^{\infty}\prod_{j\in B}V^{-2c_j}\,dV
\end{align*}
and the corollary now follows from \cite[Cor.\ to Thm.\ 25.12]{billing}.
\end{proof}

In the special case where $c_1=\ldots=c_k=c$, Corollary \ref{newcor} gives
\begin{align}\label{normal1}
\mathbb E\big(H(c,\delta)^k\big)=\underset{P \in\mathcal{P}'(k)}{\sum}2^{k-\#P}\delta^{\#P-2kc}\underset{B\in P}{\prod}\frac{1}{2c\#B-1}. 
\end{align}
In the next lemma we will consider the rescaled variable 
\begin{align}\label{finalvariable}
\mathscr H(c,\delta):=\big(2c-\sfrac12\big)^{\frac12}\delta^{2c-\frac12}H(c,\delta),
\end{align}
which by \eqref{normal1} satisfies $\mathbb E\big(\mathscr H(c,\delta)\big)=0$, $\mathbb E\big(\mathscr H(c,\delta)^2\big)=1$ and 
\begin{align}\label{normal2}
\mathbb E\big(\mathscr H(c,\delta)^k\big)=\big(2c-\sfrac12\big)^{\frac k2}\underset{P \in\mathcal{P}'(k)}{\sum}2^{k-\#P}\delta^{\#P-\frac k2}\underset{B\in P}{\prod}\frac{1}{2c\#B-1},\qquad k\geq3. 
\end{align}
If $k\geq 3$ is odd, then for every $P\in\mathcal P'(k)$ we have $\#\{B\in P\mid\#B=2\}\leq\frac12(k-3)$. Hence it follows from \eqref{normal2} that, for fixed $\delta>0$ and odd $k\geq3$, we have $\lim_{c\to\frac14+}\mathbb E\big(\mathscr H(c,\delta)^k\big)=0$. Similarly we find that, for fixed $\delta>0$ and even $k\geq4$, we have
\begin{align*}
\lim_{c\to\frac14+}\mathbb E\big(\mathscr H(c,\delta)^k\big)&=\#\big\{P\in\mathcal P'(k)\mid\#B=2,\; \forall B\in P\big\}\\
&=\binom{k}{2,\ldots,2}\frac{1}{(k/2)!}=(k-1)!!.
\end{align*}
Since these limits coincide with the corresponding moments of the distribution $N(0,1)$ and normal distributions are determined by their moments, we conclude that, for any fixed $\delta>0$, $\mathscr H(c,\delta)$ converges in distribution to $N(0,1)$ as $c\to\frac14+$. More generally, we have the following result.

\begin{lem}\label{normallemma}
Fix $m\in\Z_{\geq1}$ and let $c_j=\frac14+\eta_j$ with $\eta_j\in(0,\frac14)$ for $1\leq j\leq m$. If $\delta>0$ is fixed and $(\eta_1,\ldots,\eta_m)$ tends to the zero vector in $\R^m$ in such a way that $\eta_j/\eta_{j+1}\to0$ for each $1\leq j\leq m-1$, then the $m$-dimensional random vector $\big(\mathscr H(c_1,\delta),\ldots,\mathscr H(c_m,\delta)\big)$ converges in distribution to the distribution of $m$ independent $N(0,1)$-variables.
\end{lem}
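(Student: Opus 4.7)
My approach is the method of moments. The idea is to compute the joint moments
\[
\mathbb E\bigg(\prod_{j=1}^m\mathscr H(c_j,\delta)^{k_j}\bigg)
\]
for arbitrary $k_1,\ldots,k_m\in\Z_{\geq0}$, using the exact formula from Corollary \ref{newcor} with $k=\sum_j k_j$ and with the multiset of arguments consisting of $k_j$ copies of $c_j$ for each $j$. Since $\mathscr H(c,\delta)=(2\eta)^{1/2}\delta^{2c-\frac12}H(c,\delta)$ (writing $c=\frac14+\eta$), pulling the scaling out gives
\[
\mathbb E\bigg(\prod_j\mathscr H(c_j,\delta)^{k_j}\bigg)=\sum_{P\in\mathcal P'(k)}\delta^{\#P-\frac k2+\sum_j 2\eta_jk_j}\,2^{k-\#P}\prod_{i=1}^k(2\eta_{j_i})^{1/2}\prod_{B\in P}\frac{1}{(\frac{\#B}{2}-1)+2\sum_{i\in B}\eta_{j_i}},
\]
where $j_i\in\{1,\ldots,m\}$ records which $c_{j_i}$ the $i$-th factor belongs to.

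Next I would analyze block-by-block. For a block $B$ of size $2$, the combined contribution from its two indices is $\sqrt{\eta_{j_i}\eta_{j_{i'}}}/(\eta_{j_i}+\eta_{j_{i'}})$, which equals $\frac12$ when $j_i=j_{i'}$ and tends to $0$ whenever $j_i\ne j_{i'}$ (because under the assumption $\eta_j/\eta_{j+1}\to 0$, writing $j_i<j_{i'}$ gives $\eta_{j_i}/\eta_{j_{i'}}\to 0$, so the ratio behaves like $\sqrt{\eta_{j_i}/\eta_{j_{i'}}}\to 0$). For a block $B$ of size $r\geq 3$ the denominator stays bounded below by $\frac r2-1>0$, while the numerator contributes $\prod_{i\in B}(2\eta_{j_i})^{1/2}\to0$. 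Thus the only partitions that contribute in the limit are the pair partitions in which every pair has $j_i=j_{i'}$; moreover the $\delta$-factor reduces to $1$ in the limit for such partitions since $\#P=k/2$ and $\sum_j 2\eta_jk_j\to0$.

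Counting these "fully matched" pair partitions: they exist only when every $k_j$ is even, in which case there are $\prod_j(k_j-1)!!$ of them, each contributing $2^{k/2}\cdot(1/2)^{k/2}=1$. So
\[
\lim\mathbb E\bigg(\prod_j\mathscr H(c_j,\delta)^{k_j}\bigg)=\begin{cases}\prod_j(k_j-1)!! & \text{if every }k_j\text{ is even,}\\ 0 & \text{otherwise,}\end{cases}
\]
which are exactly the joint moments of $m$ independent $N(0,1)$-variables. Since the multivariate normal distribution is uniquely determined by its moments, the Fr\'echet--Shohat theorem (or its multivariate analogue) then yields the desired convergence in distribution.

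The main obstacle is the bookkeeping for the non-matching pair blocks: the convergence $\sqrt{\eta_{j_i}\eta_{j_{i'}}}/(\eta_{j_i}+\eta_{j_{i'}})\to 0$ holds only because of the strong hierarchy $\eta_j/\eta_{j+1}\to0$; a weaker hypothesis (e.g.\ $\eta_j\to0$ uniformly) would allow these pair contributions to persist, and one would in fact pick up off-diagonal covariances. Once this point is recognized, the rest is a careful accounting of partition types and a routine appeal to the method of moments.
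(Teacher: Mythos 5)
Your proposal is correct and follows essentially the same route as the paper: the method of moments applied to the exact moment formula of Corollary \ref{newcor}, with the limit analysis showing that only same-group pair partitions survive and contribute $1$ each; your block-by-block bookkeeping is simply a cleaner packaging of the paper's exponent count via the quantities $M_{i_1,i_2}$ in \eqref{normal4}--\eqref{normal5}. (The only quibble is the exponent of $\delta$ in your displayed formula: the combined exponent should be $\#P-\frac k2$ rather than $\#P-\frac k2+\sum_j 2\eta_jk_j$, but this is immaterial since the discrepancy $\delta^{\sum_j2\eta_jk_j}$ tends to $1$.)
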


\begin{proof}
It remains to consider the case where $m\geq2$. Let $k_1,\ldots,k_m\in\Z_{\geq0}$ satisfying $k=k_1+\ldots+k_m\geq1$ be given and let
\begin{align*}
\tilde c_j=
\begin{cases}
c_1 & \text{if $1\leq j\leq k_1$},\\
c_2 &  \text{if $k_1<j\leq k_1+k_2$},\\
\hspace{2pt}\vdots &\\
c_m &  \text{if $k_1+\ldots+k_{m-1}<j\leq k$}.
\end{cases}
\end{align*}
It follows from Corollary \ref{newcor} and \eqref{finalvariable} that
\begin{align}\label{normal3}
&\mathbb E\bigg(\prod_{j=1}^m\mathscr H(c_j,\delta)^{k_j}\bigg)\\
&=\bigg(\prod_{j=1}^{m}(2\eta_j)^{\frac{k_j}{2}}\bigg)\delta^{2\sum_{j=1}^{m}k_j\eta_j}\underset{P \in\mathcal{P}'(k)}{\sum}2^{k-\#P}\delta^{\#P-2\sum_{j=1}^{m}k_jc_j}\underset{B\in P}{\prod}\frac{1}{2\sum_{j\in B}\tilde c_j-1}.\nonumber
\end{align}

In this sum, the contribution from a given partition $P\in\mathcal P'(k)$ is, in the limit under consideration, writing $M_{i_1,i_2}$ for the number of elements $B\in P$ which satisfy $\#B=2$, $\min B\in(k_1+\ldots+k_{i_1-1},k_1+\ldots+k_{i_1}]$ and $\max B\in(k_1+\ldots+k_{i_2-1},k_1+\ldots+k_{i_2}]$ (here $k_1+\ldots+k_0:=0$), 
\begin{align}\label{normal4}
&\asymp\prod_{j=1}^{m}\eta_j^{\frac{k_j}{2}}\underset{\substack{B\in P\\\#B=2}}{\prod}\frac{1}{2\sum_{j\in B}\tilde c_j-1}\\
&\asymp\prod_{j=1}^{m}\eta_j^{\frac{k_j}{2}}\underset{1\leq i_1\leq i_2\leq m}{\prod}(\eta_{i_1}+\eta_{i_2})^{-M_{i_1,i_2}}
\asymp\prod_{j=1}^{m}\eta_j^{\frac{k_j}{2}}\underset{1\leq i_1\leq i_2\leq m}{\prod}\eta_{i_2}^{-M_{i_1,i_2}}\nonumber\\
&=\bigg(\prod_{j=1}^{m-1}\Big(\frac{\eta_j}{\eta_{j+1}}\Big)^{\frac12\sum_{\ell=1}^jk_{\ell}-\sum_{1\leq i_1\leq i_2\leq j}M_{i_1,i_2}}\bigg)\eta_m^{\frac12\sum_{\ell=1}^mk_{\ell}-\sum_{1\leq i_1\leq i_2\leq m}M_{i_1,i_2}}\nonumber.
\end{align} 
Hence, since by definition we have $\sum_{1\leq i_1\leq i_2\leq j}M_{i_1,i_2}\leq\frac12\sum_{\ell=1}^jk_{\ell}$ for each $1\leq j\leq m$, the expression in \eqref{normal4} tends to zero unless 
\begin{align}\label{normal5}
\sum_{1\leq i_1\leq i_2\leq j}M_{i_1,i_2}=\frac12\sum_{\ell=1}^jk_{\ell},\qquad\forall j\in\{1,\ldots,m\}.
\end{align}

Now suppose that $P\in\mathcal P'(k)$ gives a non-zero limit contribution to \eqref{normal3}. From \eqref{normal5} we get $M_{1,1}=\frac{1}{2}k_1$, which implies that $M_{1,j}=0$ for all $2\leq j\leq m$. Next \eqref{normal5} gives $M_{1,1}+M_{1,2}+M_{2,2}=\frac12(k_1+k_2)$. By our previous observations we must have $M_{2,2}=\frac{1}{2}k_2$ and hence it follows that $M_{2,j}=0$ for all $3\leq j\leq m$. Continuing in the same way we find that \eqref{normal5} forces $M_{j,j}=\frac{1}{2}k_j$ for all $1\leq j\leq m$ and $M_{i,j}=0$ whenever $i<j$. Conversely, we note that these conditions imply that \eqref{normal5} holds. Thus, in particular, the moment in \eqref{normal3} tends to zero unless all $k_j$ are even. Furthermore, for each partition  $P\in\mathcal P'(k)$ satisfying \eqref{normal5} the contribution to \eqref{normal3} equals
\begin{align*}
&\bigg(\prod_{j=1}^{m}(2\eta_j)^{\frac{k_j}{2}}\bigg)2^{k-\#P}\delta^{\#P-2\sum_{j=1}^{m}k_j(c_j-\eta_j)}\underset{B\in P}{\prod}\frac{1}{2\sum_{j\in B}\tilde c_j-1}=1.
\end{align*}
Hence, in the limit under consideration, the moment $\mathbb E\big(\prod_{j=1}^m\mathscr H(c_j,\delta)^{k_j}\big)$ tends to the number of partitions $P\in\mathcal P'(k)$ satisfying condition \eqref{normal5}. Recalling the discussion below \eqref{normal2} we conclude that
\begin{align*}
\mathbb E\bigg(\prod_{j=1}^m\mathscr H(c_j,\delta)^{k_j}\bigg)\to\prod_{j=1}^mM_{k_j},
\end{align*}
where 
\begin{align*}
M_k:=\begin{cases}
0 & \text{ if $k$ is odd,}\\
(k-1)!! & \text{ if $k$ is even.}
\end{cases}
\end{align*}
The lemma follows since a random vector whose coordinates in the standard basis are independent $N(0,1)$-variables is determined by its joint moments.
\end{proof}

\begin{proof}[Proof of Theorem \ref{independentgaussians}]
Let $c_j=\frac14+\eta_j$ with $\eta_j\in(0,\frac14)$ for $1\leq j\leq m$. For convenience of  notation we set $\mathscr H(c):=\big(2c-\sfrac12\big)^{\frac12}H(c)$. We note that if $\delta>0$ is fixed and $\vecx\in\Omega$ is such that $N(\delta)=0$, then 
\begin{align}\label{normal6}
\mathscr H(c)=\big(2c-\sfrac12\big)^{\frac12}\bigg(H(c,\delta)-\int_0^{\delta}V^{-2c}\,dV\bigg)=\delta^{\frac12-2c}\mathscr H(c,\delta)-\big(2c-\sfrac12\big)^{\frac12}\frac{\delta^{1-2c}}{1-2c},
\end{align}
where $\lim_{c\to\frac14+}\delta^{\frac12-2c}=1$ and $\lim_{c\to\frac14+}\big(2c-\sfrac12\big)^{\frac12}\frac{\delta^{1-2c}}{1-2c}=0$.

Now let $\ve>0$ be given. Fix $\delta>0$ small enough to ensure that $\textbf{P}(N(\delta)=0)>1-\ve$. By Lemma \ref{normallemma} there exist numbers $0<\tilde\eta_1<\ldots<\tilde\eta_m<\frac14$ such that for all vectors $(\eta_1,\ldots,\eta_m)$ satisfying $0<\eta_j\leq\tilde\eta_j$ ($1\leq j\leq m$) as well as $0<\frac{\eta_j}{\eta_{j+1}}<\frac{\tilde\eta_j}{\tilde\eta_{j+1}}$ ($1\leq j\leq m-1$), the distribution of $\big(\mathscr H(c_1,\delta),\ldots,\mathscr H(c_m,\delta)\big)$ is within $\ve$ of the distribution of $m$ independent $N(0,1)$-variables in the L\'{e}vy-Prohorov metric. Furthermore it follows from \eqref{normal6} that we can, by possibly shrinking the numbers $\tilde\eta_j$,  guarantee that 
\begin{align*}
\textbf{P}\bigg(\Big|\big(\mathscr H(c_1),\ldots,\mathscr H(c_m)\big)-\big(\mathscr H(c_1,\delta),\ldots,\mathscr H(c_m,\delta)\big)\Big|<\ve\bigg)>1-2\ve,
\end{align*}
for all admissible $(\eta_1,\ldots,\eta_m)$. The observations above together imply that the distribution of $\big(\mathscr H(c_1),\ldots,\mathscr H(c_m)\big)$ is, for all admissible $(\eta_1,\ldots,\eta_m)$, within $3\ve$ of the distribution of $m$ independent $N(0,1)$-variables in the L\'{e}vy-Prohorov metric. This concludes the proof.
\end{proof}

\subsubsection*{Acknowledgement} I am most grateful to my advisor Andreas Str\"ombergsson for suggesting that the main theorems in the present paper should be possible to prove and for many helpful and inspiring discussions on this work. I would also like to thank Svante Janson for bringing to my attention the fact that $H(c)$ and $Z_0$ have stable distributions.


\begin{thebibliography}{99}

\bibitem{bateman} P. T. Bateman, E. Grosswald,
On Epstein's zeta function,
Acta Arith. \textbf{9} (1964), 365--373.

\bibitem{BG1} V. Bentkus, F. G{\"o}tze, On the lattice point problem for ellipsoids, Acta Arith. \textbf{80} (1997), no 2, 101--125.

\bibitem{BG2} V. Bentkus, F. G{\"o}tze, Lattice point problems and distribution of values of quadratic forms,
Ann. of Math. \textbf{150} (1999), no 3, 977--1027.

\bibitem{billing} P. Billingsley, \textit{Probability and measure}, third edition,
Wiley Series in Probability and Mathematical Statistics, John Wiley \& Sons Inc., New York, 1995. 

\bibitem{billconv} P. Billingsley, \textit{Convergence of probability measures}, second edition, Wiley Series in Probability and Statistics, John Wiley \& Sons Inc., New York, 1999.

\bibitem{gotze} F. G{\"o}tze, Lattice point problems and values of quadratic forms, 
Invent. Math. \textbf{157} (2004), no 1, 195--226.

\bibitem{HW} P. Hartman, A. Wintner, On the law of the iterated logarithm, Amer. J. Math. \textbf{63} (1941), no. 1, 169--176. 

\bibitem{hecke} E. Hecke, \textit{Mathematische Werke}, second edition, Vandenhoeck \& Ruprecht, G\"ottingen, 1970.

\bibitem{jarnik} V. Jarnik, \"{U}ber {G}itterpunkte in mehrdimensionalen {E}llipsoiden, Math. Ann. \textbf{100} (1928), no. 1, 699--721.

\bibitem{kingman} J. F. C.
Kingman, \textit{Poisson processes},
Oxford Studies in Probability, vol. 3, The Clarendon Press, Oxford University Press, New York, 1993.

\bibitem{rogers} C. A. Rogers,
Mean values over the space of lattices,
Acta Math. \textbf{94} (1955), 249--287. 

\bibitem{rogers3} C. A. Rogers,
The number of lattice points in a set,
Proc. London Math. Soc. (3) \textbf{6} (1956), 305--320. 

\bibitem{rudin} W. Rudin, \textit{Real and complex analysis},
third edition, McGraw-Hill, New York, 1987.

\bibitem{ST} G. Samorodnitsky, M. S. Taqqu, Stable non-Gaussian random processes. Stochastic models with infinite variance, Chapman \& Hall, New York, 1994.

\bibitem{sst} P. Sarnak, A. Str\"ombergsson, Minima of Epstein's zeta function and heights of flat tori, Invent. Math.  \textbf{165}  (2006),  no. 1, 115--151.

\bibitem{siegel} C. L. Siegel,
A mean value theorem in geometry of numbers,
Ann. of Math. \textbf{46} (1945), 340--347. 

\bibitem{stark} H. M. Stark,
On the zeros of Epstein's zeta function,
Mathematika \textbf{14} (1967), 47--55. 

\bibitem{poisson} A. S\"odergren, On the Poisson distribution of lengths of lattice vectors in a random lattice, arXiv:1001.3623; to appear in Math. Z.

\bibitem{epstein1} A. S\"odergren, On the value distribution and moments of the Epstein zeta function to the right of the critical strip, J. Number Theory \textbf{131} (2011), no.7, 1176--1208.

\bibitem{terras2} A. Terras,
Real zeroes of Epstein's zeta function for ternary positive definite quadratic forms,
Illinois J. Math. \textbf{23} (1979), no. 1, 1--14.

\bibitem{terras3} A. Terras,
Integral formulas and integral tests for series of positive matrices,
Pacific J. Math. \textbf{89} (1980), no. 2, 471--490. 

\bibitem{terras} A. Terras,
The minima of quadratic forms and the behavior of Epstein and Dedekind zeta functions,
J. Number Theory \textbf{12} (1980), no. 2, 258--272.

\end{thebibliography}
\end{document}